\definecolor{colorlinks}{RGB}{0, 24, 168}
\definecolor{colorcites}{RGB}{124, 10, 2}
\colorlet{shadecolor}{blue!15}
\newtheorem{thm}{Theorem}
\newtheorem{prop}{Proposition}[section]
\newtheorem{cor}[prop]{Corollary}
\newtheorem{lem}[prop]{Lemma}
\newtheorem*{rem}{Remark}
\newcommand{\be}[1]{\begin{equation}\label{#1}}
\newcommand{\ee}{\end{equation}}
\numberwithin{equation}{section}
\newcommand{\ba}[1]{\begin{align}\label{#1}}
\newcommand{\ea}{\end{align}}
\numberwithin{equation}{section}
\newcommand{\ben}{\begin{equation*}}
\newcommand{\een}{\end{equation*}}
\numberwithin{equation}{section}
\newcommand{\calC}{\mathcal{C}}
\newcommand{\calD}{\mathcal{D}}
\newcommand{\calE}{\mathcal{E}}
\newcommand{\calL}{\mathcal{L}}
\newcommand{\calS}{\mathcal{S}}
\newcommand{\calT}{\mathcal{T}}
\newcommand{\calV}{\mathcal{V}}
\newcommand{\bbE}{\mathbb{E}}
\newcommand{\bbH}{\mathbb{H}}
\newcommand{\bbN}{\mathbb{N}}
\newcommand{\bbP}{\mathbb{P}}
\newcommand{\bbT}{\mathbb{T}}
\newcommand{\bbU}{\mathbb{U}}
\newcommand{\bbZ}{\mathbb{Z}}
\newcommand{\eps}{\epsilon}
\newcommand{\dist}{\mathrm{dist}}
\newcommand{\rk}[1]{\bgroup\color{red}%
  \par\medskip\hrule\smallskip%
  \noindent\textbf{#1}%
  \par\smallskip\hrule\medskip\egroup}
\renewcommand{\int}{\mathrm{in}}
\newcommand{\zero}{\mathsf{0}}
\newcommand{\1}{1}
\newcommand{\XOR}{\,\oplus \,}
\newcommand{\tor}{\mathsf{Tor}}
\renewcommand{\colon}{\,:\,}
\setlist[itemize]{itemsep=1pt, topsep=4pt}
\setlist[enumerate]{itemsep=1pt, topsep=4pt}
\title{Macroscopic loops in the loop~$O(n)$ model via the XOR trick}
\date{\today}
\author{Nicholas Crawford\thanks{The Technion, Israel. \url{nickc@tx.technion.ac.il}} \and Alexander Glazman\thanks{University of Innsbruck, Austria. \url{alexander.glazman@uibk.ac.at}}  \and Matan Harel\thanks{Northeastern University, USA. \url{m.harel@northeastern.edu}}  \and Ron Peled\thanks{Tel Aviv University, Israel. \url{peledron@tauex.tau.ac.il}}}
\date{\today}
\begin{document}

\maketitle

\begin{abstract}
The loop $O(n)$ model is a family of probability measures on collections of non-intersecting loops on the hexagonal lattice, parameterized by a loop-weight~$n$ and an edge-weight~$x$. 
Nienhuis predicts that, for $0 \leq n \leq 2$, the model exhibits two regimes separated by~$x_c(n) = 1/\sqrt{2 + \sqrt{2-n}}$: when $x < x_c(n)$, the loop lengths have exponential tails, while, when $x \geq x_c(n)$, the loops are macroscopic.

In this paper, we prove three results regarding the existence of long loops in the loop $O(n)$ model:\\
-- In the regime $(n,x) \in [1,1+\delta) \times (1- \delta, 1]$ with $\delta >0$ small, a configuration sampled from a translation-invariant Gibbs measure will either contain an infinite path or have infinitely many loops surrounding every face. In the subregime $n \in [1,1+\delta)$ and $x \in (1-\delta,1/\sqrt{n}]$ our results further imply Russo--Seymour--Welsh theory. This is the first proof of the existence of macroscopic loops in a positive area subset of the phase diagram.\\
-- Existence of loops whose diameter is comparable to that of a finite domain whenever $n=1, x \in (1,\sqrt{3}]$; this regime is equivalent to part of the antiferromagnetic regime of the Ising model on the triangular lattice.\\
-- Existence of non-contractible loops on a torus when $n \in [1,2], x=1$.

The main ingredients of the proof are: (i) the `XOR trick': if $\omega$ is a collection of short loops and $\Gamma$ is a long loop, then the symmetric difference of $\omega$ and $\Gamma$ necessarily includes a long loop as well; (ii) a reduction of the problem of finding long loops to proving that a percolation process on an auxiliary planar graph, built using the Chayes--Machta and Edwards--Sokal geometric expansions, has no infinite connected components; and (iii) a recent result on the percolation threshold of Benjamini--Schramm limits of planar graphs.
\end{abstract}

\section{Introduction}\label{sec:intro}

The loop~$O(n)$ model is a model for non-intersecting simple cycles (which we term {\em loops}) on the hexagonal lattice~$\bbH$, parameterized by a loop weight $n > 0$ and an edge weight $x>0$, and defined as follows:
A \emph{loop configuration} is a spanning subgraph of $\bbH$ in which
every vertex has even degree (see Figure~\ref{fig:Loop O n model}). Note that a loop configuration can a priori consist of loops (i.e., subgraphs which are isomorphic to a cycle) together with isolated vertices and bi-infinite paths.
For a subgraph $\calD$ of the hexagonal lattice $\bbH$ and a loop configuration $\xi$, let $\calE(\calD,\xi)$ be the set of loop configurations coinciding with $\xi$ outside $\calD$. The loop $O(n)$ measure on $\calD$ with edge-weight $x$ and boundary conditions $\xi$ is the probability measure
$\bbP_{\calD,n,x}^\xi$ on $\calE(\calD,\xi)$ defined by the formula
  \[
  \bbP_{\calD,n,x}^\xi(\omega) := \frac{x^{|\omega|} n^{\ell(\omega)}}{Z_{\calD,n,x}^\xi},  \]
for every $\omega\in \calE(\calD,\xi)$,  where $|\omega|$ is the number of edges of $\omega\cap\calD$, $\ell(\omega)$ is the number of loops or bi-infinite paths of $\omega$ intersecting $\calD$, and $Z_{\calD,n,x}^\xi$ is the unique constant making $\bbP_{\calD,n,x}^\xi$ a probability measure.
We may also extend the measure to~$n=0$ or~$x=\infty$ by taking the appropriate limits.

\begin{figure}[t]
\centering 
	\subfigure[n=1.4, x=0.6]{
	\includegraphics[scale=0.13]{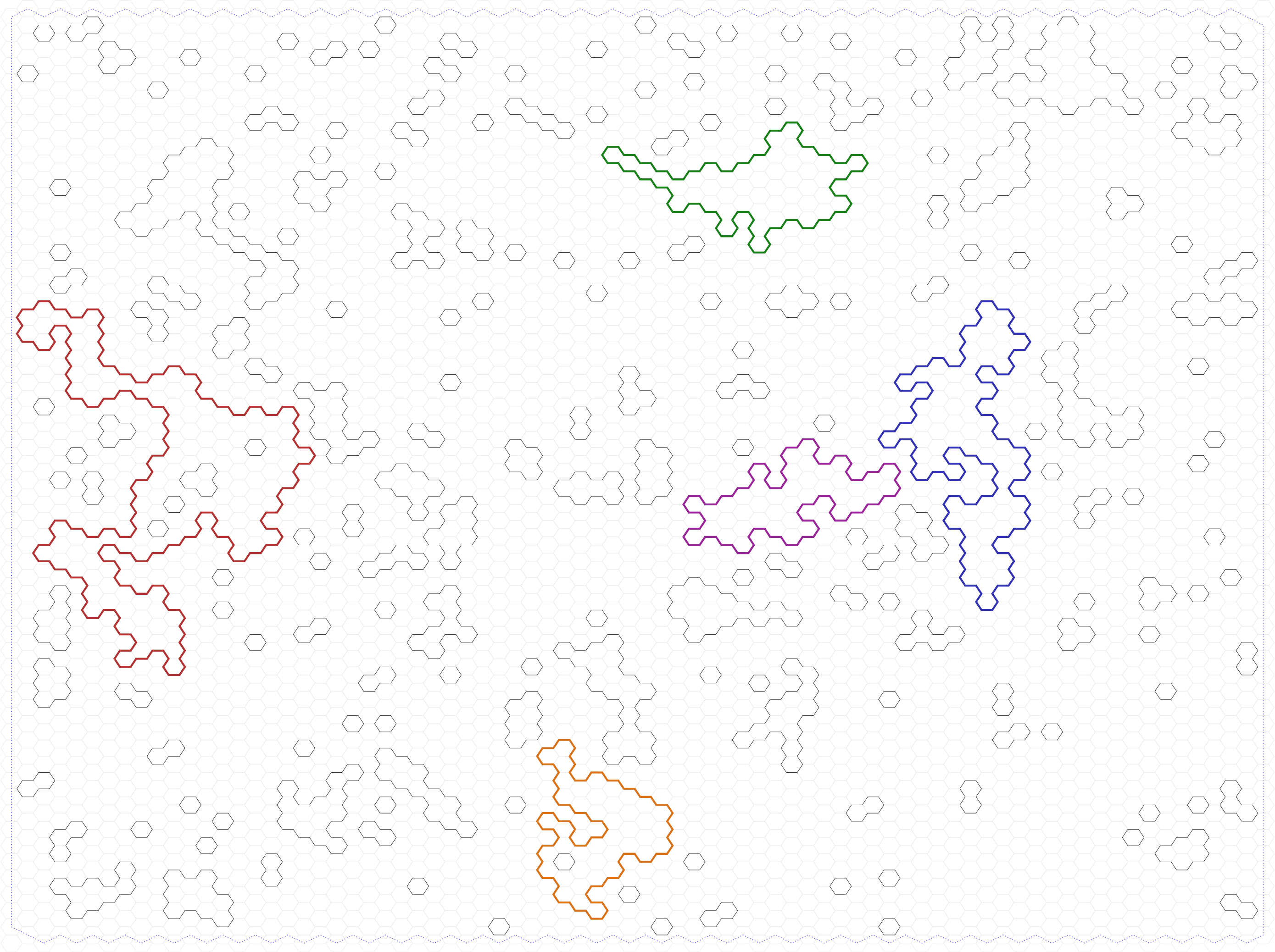}
	\label{fig:subfig2} } \subfigure[n=1.4, x=0.63]{
	\includegraphics[scale=0.13]{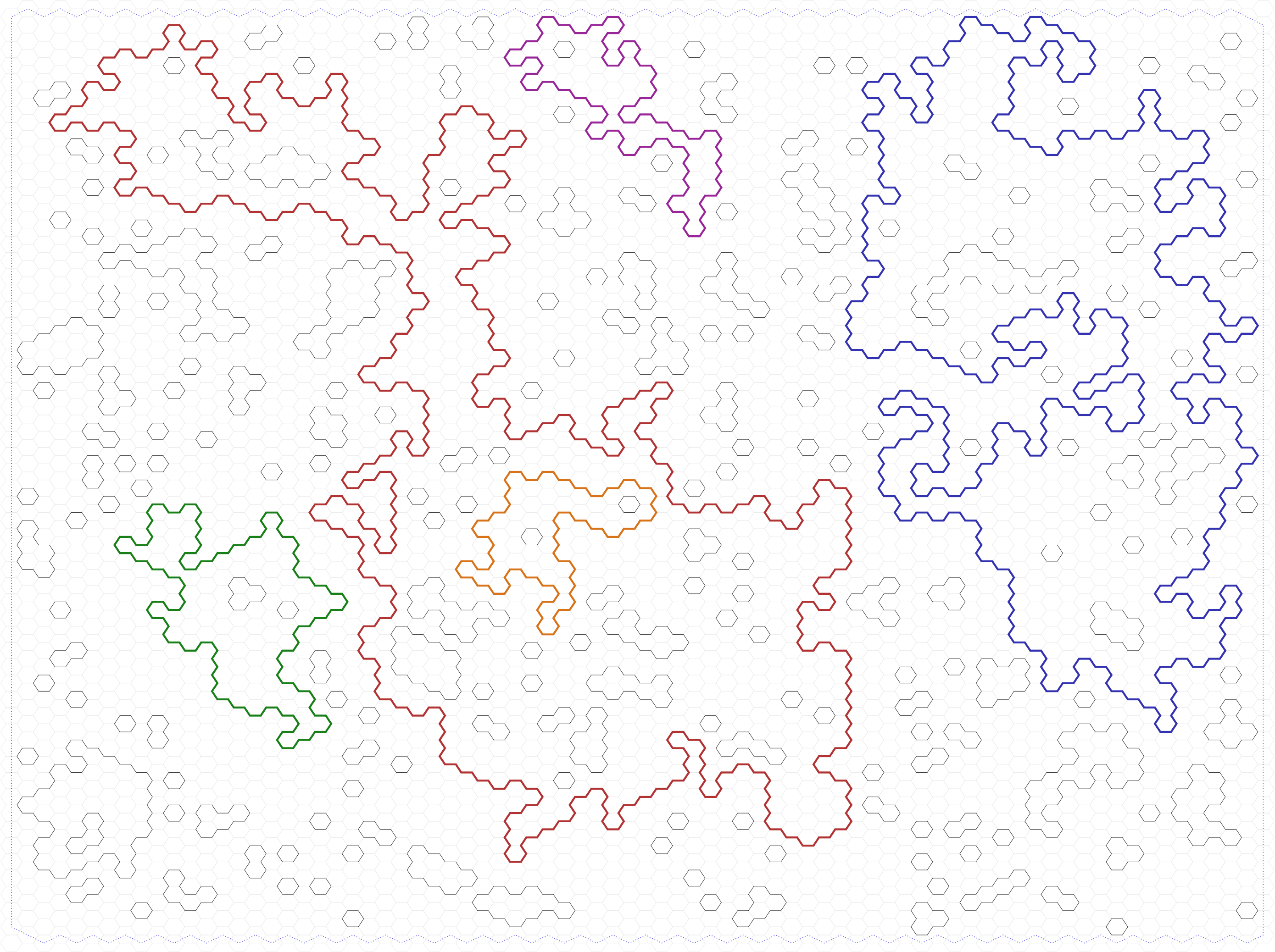}
	\label{fig:subfig3} } \caption{Samples of the loop $O(n)$ model for two nearby parameter values: the first in the predicted exponential decay regime and the second in the predicted macroscopic loops regime. The longest loops are colored for visibility. \label{fig:Loop O n model}}
\end{figure}

\smallskip
{\bf Background.} The loop~$O(n)$ model contains other notable models of statistical mechanics as special cases: the Ising model ($n=1$), critical percolation ($n=x=1$), the dimer model ($n=1, x=\infty$), self-avoiding walk ($n=0$), Lipschitz functions ($n=2$), proper 4-colorings ($n=2,x=\infty$), dilute Potts $(n =\sqrt{q}, q $ integer), and the hard-hexagon model ($n \to \infty, nx^6 \to \lambda$). Furthermore, the model serves as an approximate graphical representation of the spin $O(n)$ model, conjectured to be in the same universality class, which was the original motivation for its introduction~\cite{DomMukNie81}. See~\cite{PelSpi17} for a recent survey on both~$O(n)$ models.

 A tantalizing 1982 prediction of Nienhuis~\cite{Nie82}, with later refinements~\cite{Nie84, BloNie89},~\cite[Section 5.6]{KagNie04},~\cite[Section 2.2]{Smi06}, states that whenever
\begin{equation}\label{eq:Nienhuis line}
  0< n \le 2\quad\text{and}\quad x\ge x_c(n)=\frac{1}{\sqrt{2 + \sqrt{2-n}}},
\end{equation}
the loop~$O(n)$ model has a conformal-invariant scaling limit~$\mathrm{CLE}_{\kappa}$, 
where~$\kappa = \kappa(n,x)$ may take all values in~$(8/3,8)$.
In particular, in this regime, the loops are {\em macroscopic}: for any domain and any boundary conditions, there is a loop surrounding a ball of radius comparable to that of the domain with a uniformly positive probability.
For all other parameter values, the length of the loop passing through a given vertex is expected to have exponential tails, uniformly in the domain and vertex (for empty boundary conditions).

Mathematical progress on the predictions remains limited. Conformal invariance has only been obtained for the critical Ising model ($n=1, x=\frac{1}{\sqrt{3}}$)~\cite{Smi10,CheSmi11,CheDumHonKemSmi14} and for critical site percolation on the triangular lattice ($n=x=1$)~\cite{Smi01, CamNew06}. Other progress concerns coarser properties of the loop structure, as summarized in Figure~\ref{fig:phase diagram} and detailed below.

\begin{figure}[t]
	\begin{center}
		\includegraphics[scale=1.2]{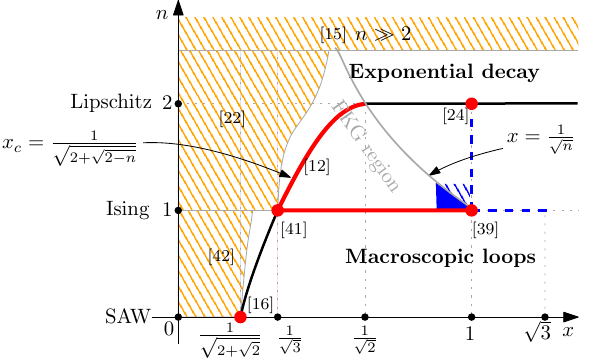}
	\end{center}
	\caption{The predicted phase diagram for the loop $O(n)$ model. The critical line $x_c$ separating the regime of exponential decay from the regime of macroscopic loops is plotted in bold. The region~$n\geq 1, x\leq \tfrac{1}{\sqrt{n}}$ where a dichotomy between the two behaviors is proved is denoted FKG region. Orange lines show regions where exponential decay is proved. Red dots or lines mark regions where macroscopic loops are proven to occur. The results of this paper are indicated in blue: macroscopic loops are established in the filled area, while exponential decay is ruled out in the dashed area and segments. Picture adapted from~\cite{GlaMan21b}.}
	\label{fig:phase diagram}
\end{figure}

As mentioned above, loop lengths are predicted to follow one of two types of behaviors, according to the value of $n$ and $x$: either macroscopic loops appear, or the length of loops has exponential tail decay. However, other behaviors have not been ruled out in general. Recently, such a dichotomy has been established in the parameter range $n \geq 1, x \leq \tfrac{1}{\sqrt{n}}$~\cite{DumGlaPel21}, through the use of a positively associated representation of the model.

The only two cases where the loop $O(n)$ model was shown to exhibit a phase transition at $x=x_c(n)$ are $n=1$ (Ising model) and $n=0$ (self-avoiding walk). Exponential tails for loop lengths in the low-temperature Ising model ($x<1/\sqrt{3}$)~\cite{Ons44,AizBarFer87} and existence of infinitely many loops around each vertex (in the unique Gibbs measure) for the critical and high-temperature ferromagnetic Ising model ($1/\sqrt{3}\leq x\leq 1$) are classical. In the latter case, emergence of macroscopic loops follows from a general Russo--Seymour--Welsh (RSW) theory developed in~\cite{Tas16} (or, alternatively, from the dichotomy result~\cite{DumGlaPel21}).\footnote{An extension of the RSW theory to~$1<x<1+\varepsilon$ is discussed in a preprint ``Percolation without FKG'' by Beffara and Gayet from~2017. However, they have communicated to us that their proof contains a crucial mistake.}
The critical point of the self-avoiding walk on the hexagonal lattice was proven to equal~$\tfrac{1}{\sqrt{2+\sqrt{2}}}$ in the celebrated work~\cite{DumSmi12}. The walk was shown to scale to a straight line segment for smaller $x$~\cite{Iof98}, and to be space-filling for larger $x$~\cite{DumKozYad11}.

The results known beyond the cases $n=0,1$ are as follows. Exponential decay was established when either $n>0, x < \tfrac{1}{\sqrt{2+\sqrt{2}}} + \varepsilon(n)$~\cite{Tag18} or $n >1,  x < \tfrac{1}{\sqrt{3}}+ \varepsilon(n)$~\cite{GlaMan21b}, by comparing to the behavior of the self-avoiding walk and Ising model, respectively. In addition, for sufficiently large $n$, exponential decay was proven for all~$x>0$ (and an ordering transition was further established)~\cite{DumPelSamSpi17}. Lastly, existence of macroscopic loops (as well as Russo--Seymour--Welsh type estimates) was recently shown to occur on the line $x=x_c(n)$ for $1\le n\le 2$~\cite{DumGlaPel21} and also at $n=2, x=1$~\cite{GlaMan21} (uniform Lipschitz functions).

\smallskip
{\bf Results.} Existence of macroscopic loops has been established only in the rather sparse set of parameters discussed above. In fact, away from this set, even the more modest goal of excluding exponential decay of loop lengths has not been achieved. The goal of the present work is to introduce a new technique for showing the existence of long loops in the loop $O(n)$ model. The technique applies in the vicinity of the critical percolation point $n=x=1$ and allows us to derive the following three results:
\begin{itemize}
	\item[\emph{(i)}] For some $\delta>0$, the model with $1\le n \leq 1+\delta$ and $1-\delta \leq x\le 1$ satisfies that, in any translation-invariant Gibbs measure, there is either an infinite path or there are infinitely many finite loops surrounding the origin, almost surely. In the intersection of this regime and the proven dichotomy regime, i.e., when $1\le n \leq 1+\delta$, $1-\delta \leq  x\le \tfrac{1}{\sqrt{n}}$, the result implies the existence of macroscopic loops in finite domains and the associated RSW theory.

This is the first result to show that macroscopic loops occur on a regime of parameters with positive Lebesgue measure.
    \item[\emph{(ii)}] In the regime $n= 1, x\in (1, \sqrt{3}]$, which corresponds to an antiferromagnetic Ising model, in any Gibbs measure, there is either an infinite path or there are infinitely many finite loops surrounding the origin, almost surely.
    \item[\emph{(iii)}] In the parameter range $1\le n\le 2$, $x=1$, it is shown that the model on a torus exhibits a non-contractible loop with uniformly positive probability.
\end{itemize}
More precise statements as well as additional finite-volume consequences will appear in the three subsections below.

The new technique is based on a `XOR trick'. The XOR trick is straightforward in the case of critical percolation (see Section \ref{section:XOR}). Its application to other values of $n$ and $x$ is non-obvious; our analysis uses expansions in $n$ and $x$ and requires delicate control of the percolative properties of the resulting graphical representations. 
This control, in turn, relies on recent bounds on the site percolation threshold of Benjamini--Schramm limits of finite planar graphs. 

Previous techniques for showing the existence of long loops relied on positive association (FKG) properties for a suitable spin representation; such representations are only known to exist in the regimes $n\ge 1, x\le\tfrac{1}{\sqrt{n}}$~\cite{DumGlaPel21} and $n\ge 2, x\le \tfrac{1}{\sqrt{n-1}}$~\cite{GlaMan21}. Among the merits of the new technique is that it applies in the absence of such FKG properties. A second merit is that the technique makes little use of the specific structure of the underlying hexagonal lattice; the conclusion of~Theorem~\ref{thm:long-around-1-inf-vol} holds for general quasi-transitive, trivalent planar graphs (such as the `bathroom tiling' graph).
Moreover, only mild use is made of quasi-transitivity (for establishing existence of Benjamini--Schramm limits; see Lemma~\ref{lem:uniform-integrability}) and thus the technique might also be of use in studying the loop~$O(n)$ model on non-periodic plane triangulations (e.g., on the Uniform Infinite Planar Triangulation~\cite{AngSch03}).

\smallskip
{\bf Notation.} In this paper, we embed the hexagonal lattice $\bbH$ and its dual triangular lattice~$\bbT$ in the (complex) plane so that vertices of~$\bbT$ (centers of faces of~$\bbH$) are identified with numbers~$k+\ell e^{i\pi/3}$ where~$k,\ell\in\bbZ$. The face of~$\bbH$ centered at the origin is denoted by~$\zero$.

For a non-negative integer~$k$, and some face~$f$ of~$\bbH$, let~$\Lambda_k(f)$ be the subgraph of~$\bbH$ induced by all vertices bordering faces in the graph ball of radius~$k$ around~$f$ in~$\bbT$. 
Below, with a slight abuse of notation, we refer to~$\Lambda_k(f)$ as the ball of radius~$k$ around~$f$.
We implicitly assume that~$\Lambda_k$ (with no face~$f$ indicated) is the ball around~$\zero$.

We say that a subgraph~$\calD$ of~$\bbH$ is a {\em domain} if it consists of all vertices and edges surrounded by some self-avoiding cycle on~$\bbH$ (including the cycle itself). In particular, the balls defined above are domains.

For a general graph~$G$, we denote the set of its edges by~$E(G)$. 
Set $\dist_{G}$ to be the graph distance in $G$, and define $B_r(v)$ to be the combinatorial ball of radius $r$ around a vertex $v$ in $G$ (with respect to this metric).

\subsection{Results in the regime $1\le n\le 1+\delta$ and $1-\delta\le x\le 1$}
\label{sec:res-near-1-1}

It is convenient to first state our results in terms of infinite-volume measures and then pass to their finite-volume consequences. We will use the DLR formalism (due to Dobrushin~\cite{Dob68} and Lanford and Ruelle~\cite{LanRue69}). A measure $\bbP_{n,x}$ is an \emph{infinite-volume Gibbs measure} of the loop $O(n)$ model with edge-weight $x$, if $\bbP_{n,x}$ is supported on loop configurations (i.e.,  configurations of loops and bi-infinite paths) and satisfies the following property. Let $\omega$ be a sample from $\bbP_{n,x}$. Then, for any finite subgraph $\calD$ of $\bbH$, conditioning on the restriction of $\omega$ to $\calD^c$, almost surely, the distribution of $\omega$ is given by $\bbP_{\calD,n,x}^\omega$ (noting that this measure is determined by the restriction of $\omega$ to $\calD^c$). The measure $\bbP_{n,x}$ is called \emph{translation-invariant} if it is invariant under all translations of the plane preserving the lattice $\bbH$.

\begin{thm}\label{thm:long-around-1-inf-vol}
  There exists $\delta>0$ such that the following holds. Let $\bbP_{n,x}$ be a translation-invariant Gibbs measure of the loop $O(n)$ model with
  \begin{equation}\label{eq:theorem-1-regime}
    1\le n\le 1+\delta,\quad 1-\delta\le x\le 1.
  \end{equation} Then,
  \begin{equation*}
  	\bbP_{n,x}\,\big(\{\text{$\exists$ bi-infinite path }\} \cup \{ \text{every face is surrounded by  infinitely many loops} \}\big)=1.
  \end{equation*}
\end{thm}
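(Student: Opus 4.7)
The plan is to proceed by contradiction. Assume $\bbP_{n,x}$ gives positive mass to the complement event: there is no bi-infinite path and some face is surrounded by only finitely many loops. Since the event in the statement of the theorem is translation-invariant, by ergodic decomposition we may assume the complement event has probability one, so almost surely every loop of $\omega$ is finite and there is a (random) scale $R$ beyond which no loop of $\omega$ surrounds the origin face $\zero$. To derive a contradiction I would exploit the XOR trick. At the reference point $(n,x)=(1,1)$ the loop $O(1)$ model at $x=1$ coincides with the law of interfaces of critical site percolation on the triangular lattice $\bbT$, and by the classical RSW theorem for this model the origin is a.s.\ surrounded by infinitely many loops. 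Sampling an independent configuration $\omega_0$ from this reference law and forming the symmetric difference $\omega \oplus \omega_0$ gives an even subgraph of $\bbH$, hence a valid loop configuration. The XOR trick asserts that if all loops of $\omega$ have diameter at most $R$ and $\Gamma$ is a loop of $\omega_0$ of diameter much larger than $R$, then $\omega \oplus \Gamma$ necessarily contains a loop of diameter comparable to that of $\Gamma$. Consequently, any argument showing that $\omega \oplus \omega_0$ has only finite loops around $\zero$ yields the contradiction we are after.

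To analyze the joint law of $(\omega,\omega_0)$, I would invoke the Chayes--Machta expansion applied to the factor $n^{\ell(\omega)}$ (using $n\ge 1$) and the Edwards--Sokal expansion applied to the factor $x^{|\omega|}$ (using $x\le 1$). Each loop is independently marked with a color, and each edge is endowed with an auxiliary Bernoulli variable; the joint output is a probability measure $\tilde{\bbP}$ on configurations together with auxiliary data, whose $\omega$-marginal is $\bbP_{n,x}$. The auxiliary data can be encoded as a site percolation process on a (random) planar graph $G$ built from the geometry of $(\omega,\omega_0)$, or more intrinsically from $\omega \oplus \omega_0$. Under this correspondence, the event ``every face of $\bbH$ is surrounded by infinitely many loops in $\omega\oplus\omega_0$'' is implied by the event ``site percolation on $G$ has no infinite cluster.'' This is the reduction (ii) announced in the introduction.

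The last and most delicate step is to verify the no-infinite-cluster property for the auxiliary percolation. At $(n,x)=(1,1)$ the graph $G$ degenerates to $\bbT$ and the auxiliary percolation is Bernoulli at density $p=1/2=p_c(\bbT)$, so the conclusion is Kesten's classical result. For $(n,x)$ in the small neighborhood of interest, the occupied-site density deviates by $O(\delta)$ from $1/2$, while $G$ itself becomes a genuinely different random planar graph. The key input is the recent lower bound on $p_c$ for site percolation on Benjamini--Schramm limits of planar graphs (ingredient (iii)): this bound is strictly larger than $1/2$ whenever $G$ differs quantitatively from $\bbT$, leaving enough slack to absorb the $O(\delta)$ density perturbation. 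The main obstacle, in my view, is precisely the joint construction and analysis of $G$: one must set up the Chayes--Machta and Edwards--Sokal expansions so that $G$ is honestly unimodular and planar in the sense required by the BS-limit bound, \emph{and} one needs sharp enough control on the occupied-site density to compare it quantitatively against $p_c(G)$. The direction of the perturbation ($n\ge 1$, $x\le 1$) is chosen precisely so that both expansions yield genuine probability measures rather than signed ones, which is what makes the percolation interpretation meaningful in the first place.
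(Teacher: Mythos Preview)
Your high-level ingredients are the right ones --- ergodic decomposition, the Chayes--Machta and Edwards--Sokal expansions, an auxiliary planar graph, and the Benjamini--Schramm percolation bound --- but the way you assemble them is not the paper's approach and, as written, does not go through.

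The paper never introduces an independent sample $\omega_0$ from critical percolation. Instead, the two expansions are applied to $\omega$ \emph{itself}: each loop of $\omega$ is coloured blue with probability $(n-1)/n$ and each non-loop edge is declared a defect with probability $1-x$. Conditioned on the blue loops $\omega_b$ and the defect edges $\eta$, the red configuration $\omega_r$ is \emph{uniform} on loop configurations avoiding $\omega_b\cup\eta$. This is what makes the XOR operation measure-preserving: one XORs $\omega_r$ with a single \emph{defect-free} circuit $\Gamma$ (i.e.\ a circuit disjoint from $\omega_b\cup\eta$), not with an independent random configuration. Your proposed map $\omega\mapsto\omega\oplus\omega_0$ has no such invariance, and even if $\omega\oplus\omega_0$ had large loops around $\zero$, that says nothing about $\omega$ itself.

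Consequently, your quantitative picture of the percolation step is off. The auxiliary planar graph $G$ is built from $\omega$ alone (loops, non-loop edges, and faces as vertices), and the site percolation on it records the defects $(\omega_b,\eta)$. At $(n,x)=(1,1)$ the defect density is $0$, not $1/2$; for $(n,x)$ in the regime \eqref{eq:theorem-1-regime} it is $O(\delta^{1/6})$. The Benjamini--Schramm input is accordingly much weaker than you state: it asserts only that some universal $p_0>0$ (possibly tiny) is below $p_c$ for every BS limit of finite planar graphs --- not that $p_c>1/2$ once $G$ differs from $\bbT$. One then chooses $\delta$ so that the defect density is below this $p_0$. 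The ``slack'' you seek does not come from $p_c(G)$ exceeding $1/2$; it comes from the defect density tending to $0$.
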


To place the theorem in context, we briefly discuss some of the beliefs regarding the number and structure of infinite-volume Gibbs measures. When~$0<n\le 2$ and $0<x<\infty$, the loop $O(n)$ model is expected to have a unique Gibbs measure  (when both $n$ and $nx^6$ are large, the model has multiple periodic Gibbs measures; see~\cite{DumPelSamSpi17}). 
In addition, for any~$n>0$ and $0<x<\infty$, all Gibbs measures are expected to be periodic and supported on loop configurations without bi-infinite paths. If these statements were established (for the regime~\eqref{eq:theorem-1-regime}), the theorem would imply that the unique Gibbs measure has infinitely many loops surrounding every vertex, almost surely. 
However, unicity of the Gibbs measure and the absence of bi-infinite paths are currently only proven when the model has a positively associated representation: when~$n\geq 1,x\leq \tfrac{1}{\sqrt{n}}$~\cite{DumGlaPel21,GlaMan21c} and at~$n=2,x=1$~\cite{GlaMan21}.

The theorem rules out the possibility of exponential decay for the loops in the regime~\eqref{eq:theorem-1-regime} (see Corollary~\ref{cor:long-around-1} below). In the intersection with the dichotomy regime $n \geq 1, x \leq \tfrac{1}{\sqrt{n}}$, one thus concludes the existence of macroscopic loops in finite domains and the associated RSW theory. For instance, the following finite-volume statement is an immediate corollary of~\cite[Theorem 1]{DumGlaPel21}.

\begin{cor}\label{cor:RSW}
There exist constants $\delta,c >0$ for which the following holds. Let $1\le n\le 1+\delta$ and $1-\delta\le x\le \tfrac{1}{\sqrt{n}}$. For any $k>2$ and any loop configuration $\xi$,
\begin{equation}\label{eq:RSW}
c\le \bbP_{A_k,n,x}^\xi[\exists\text{ a loop in $A_k$ surrounding } \zero]\le 1-c,
\end{equation}
where $A_k := \Lambda_{2k}(\zero) \setminus \Lambda_k(\zero)$ is an annulus with inner radius $k$ and outer radius $2k$.
\end{cor}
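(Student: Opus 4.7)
The plan is to combine Theorem~\ref{thm:long-around-1-inf-vol} with the dichotomy established by Duminil-Copin--Glazman--Peled in~\cite[Theorem~1]{DumGlaPel17}. That result asserts that, within the FKG regime $n\ge 1$, $x\le 1/\sqrt{n}$, the loop $O(n)$ model exhibits exactly one of two behaviors: either the probability that a loop of diameter at least $k$ passes through a given vertex decays exponentially in $k$, uniformly over all domains and boundary conditions, or the two-sided crossing estimate~\eqref{eq:RSW} holds. The parameter range of the corollary is precisely the intersection of the FKG regime with the regime~\eqref{eq:theorem-1-regime} of Theorem~\ref{thm:long-around-1-inf-vol}, so the proof reduces to ruling out the exponential decay alternative on this intersection.

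First, I would pass from the infinite-volume statement of Theorem~\ref{thm:long-around-1-inf-vol} to a finite-volume consequence that is incompatible with uniform exponential decay. By compactness of $\{0,1\}^{E(\bbH)}$, the sequence of finite-volume measures $\bbP_{\Lambda_N, n, x}^{\xi_N}$ admits subsequential weak limits; averaging along the sequence over translations inside a sub-box of diameter $o(N)$ produces a translation-invariant subsequential limit, which by construction is a Gibbs measure in the sense of Section~\ref{sec:res-near-1-1}. If the probability of a loop of diameter at least $k$ around the origin decayed exponentially in $k$ uniformly in $N$ and $\xi_N$, then under any such translation-invariant Gibbs measure almost surely no bi-infinite path would exist and each face would be surrounded by only finitely many loops, in direct contradiction with Theorem~\ref{thm:long-around-1-inf-vol}.

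Second, applying the dichotomy of~\cite[Theorem~1]{DumGlaPel17} throughout the stated parameter range, the exclusion of exponential decay forces the RSW alternative~\eqref{eq:RSW}. This yields the lower bound $c$ in~\eqref{eq:RSW}; the upper bound $1-c$ is a standard consequence in the FKG regime: the non-existence of a loop in $A_k$ surrounding $\zero$ can be expressed in the associated spin representation as a dual crossing of $A_k$ between its inner and outer boundary components, whose probability is bounded below by the same RSW input, again uniformly in $k$ and in $\xi$.

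The main obstacle is essentially one of bookkeeping: one must check that $\delta$ can be chosen small enough so that the hypotheses of both Theorem~\ref{thm:long-around-1-inf-vol} and~\cite[Theorem~1]{DumGlaPel17} are satisfied throughout the range $1\le n\le 1+\delta$, $1-\delta\le x\le 1/\sqrt{n}$. Since the FKG regime already contains the segment $x\le 1/\sqrt{n}$ for every $n\ge 1$, it suffices to take the $\delta$ furnished by Theorem~\ref{thm:long-around-1-inf-vol}, possibly shrunk so that $1-\delta \le 1/\sqrt{1+\delta}$.
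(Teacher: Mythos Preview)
Your approach is essentially the same as the paper's: construct a translation-invariant Gibbs measure, feed it into Theorem~\ref{thm:long-around-1-inf-vol}, and then invoke the dichotomy of~\cite{DumGlaPel17} to exclude exponential decay and obtain~\eqref{eq:RSW}. The paper's proof is two lines: it cites Proposition~\ref{prop:gibbs} for the existence of a translation-invariant Gibbs measure, applies Theorem~\ref{thm:long-around-1-inf-vol} to get either infinitely many surrounding loops or a bi-infinite path, and then notes that the dichotomy result both rules out the bi-infinite path in the FKG regime and upgrades the surrounding-loops alternative to the full two-sided RSW estimate.

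The one genuine gap in your write-up is the clause ``which by construction is a Gibbs measure.'' For the loop $O(n)$ model this is \emph{not} automatic: the weight $n^{\ell(\omega)}$ is non-local, and a thermodynamic limit can in principle fail the DLR condition if several bi-infinite paths pass through a finite window (the paper flags exactly this issue in the discussion preceding Corollary~\ref{cor:long-around-1} and resolves it in Proposition~\ref{prop:gibbs} via a Burton--Keane argument). Your averaging over translations in a sub-box of diameter $o(N)$ gives translation invariance but does nothing for the Gibbs property. You could salvage your contradiction argument by observing that, \emph{under the assumed exponential decay}, no long loops cross a fixed finite window in the limit, so the DLR condition is then easy to verify; but this should be said explicitly rather than asserted as ``by construction.'' Alternatively, just cite Proposition~\ref{prop:gibbs} as the paper does. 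Your separate discussion of the upper bound $1-c$ via the dual spin crossing is correct but redundant: the dichotomy theorem of~\cite{DumGlaPel17} already delivers both inequalities in~\eqref{eq:RSW} simultaneously.
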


We proceed to elaborate on the finite-volume consequences of Theorem~\ref{thm:long-around-1-inf-vol} in the full regime~\eqref{eq:theorem-1-regime}. The first issue to address is finding a sequence of domains and boundary conditions for which the loop $O(n)$ model converges to a translation-invariant Gibbs measure in the thermodynamic (subsequential) limit. We use the natural choice of taking the domains to be balls of growing radius (more generally, F{\o}lner sequences) with suitable boundary conditions. To obtain translation-invariance in the limit, the configuration is considered from the point of view of a uniformly chosen face $\mathbf{f}$. 

A technical point which must be addressed is that it is {\em a priori} unclear that thermodynamic limits are Gibbs measures. 
This issue arises for infinite-volume limit measures which give positive probability to the event of multiple bi-infinite paths.
Two distinct bi-infinite paths can arise as limits of one or two loops in the finite-volume measures~--- and this affects the probability distribution on local rewirings of a finite-volume configuration.
Hence, the limiting distribution on these rewirings will include a non-trivial term corresponding to the way the bi-infinite paths are wired.
This violates the DLR condition formulated above Theorem~\ref{thm:long-around-1-inf-vol} (see, e.g., \cite{haggstrom1996random} for more details and discussion).
However, in our context of translation-invariant limit measures, the Gibbs property may be derived from a version of the Burton--Keane argument (see Proposition \ref{prop:gibbs}). Therefore, Theorem~\ref{thm:long-around-1-inf-vol} implies that, as the domains grow, one either has a long loop (converging to the bi-infinite path) in the vicinity of $\mathbf{f}$, or else one has longer and longer loops surrounding~$\mathbf{f}$.

We may further conclude that the length of the loop passing near a typical face $\mathbf{f}$ is not a uniformly integrable sequence of random variables. To give a precise statement, we begin with some definitions.
For a loop configuration $\omega$ and a face $f$ of the hexagonal lattice, we set $\calL(f)$ to be the length of the longest loop that borders $f$ in $\omega$ (setting it to zero if no such loop exists and to infinity if~$f$ borders a bi-infinite path). We set $C_{r,R}(f)$ to be the event that there exists a loop or a bi-infinite path in $\omega$ which intersects both $\Lambda_r(f)$ and $\Lambda_R(f)^c$; we also set $S_{r,R}(f)$ to be the event that there exists a loop in $\omega$ which is contained in $\Lambda_R(f) \setminus \Lambda_r(f)$ and surrounds $f$. A sequence of random variables $\{X_k\}$ is called {\em uniformly integrable} if
\begin{equation}\label{eq:ui-def}
	\lim_{r\to \infty} \sup_{k} \mathbb{E}[|X_k| \cdot 1_{|X_k| \geq r}] = 0.
\end{equation}
In particular, if the sequence $\{X_k\}$ is not uniformly integrable then $\sup_k\mathbb{E}|X_k|^{1+\eps}=\infty$ for all $\eps>0$.

\begin{cor}\label{cor:long-around-1}
There exists a constant $\delta >0$ for which the following holds. Assume that \eqref{eq:theorem-1-regime} holds. Let $\{\xi_k\}$ be loop configurations, $\omega_k$ be sampled from $\bbP_{\Lambda_k(\zero),n,x}^{\xi_k}$, and $\mathbf{f}_k$ be a uniformly chosen face of $\Lambda_k(\zero)$, sampled independently of $\omega_k$. Then
\[
\lim_{r \to \infty} \lim_{R \to \infty} \liminf_{k \to \infty} \mathbb{P}\left[C_{r,R}(\mathbf{f}_k) \cup S_{r,R}(\mathbf{f}_k)\right] = 1.
\]
In particular, the sequence of random variables $\{\calL(\mathbf{f}_k)\}$ is not uniformly integrable.
\end{cor}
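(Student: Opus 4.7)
The plan is to deduce this finite-volume statement from the infinite-volume Theorem~\ref{thm:long-around-1-inf-vol} via a thermodynamic limit argument along the balls $\Lambda_k(\zero)$. First, I would extract a subsequential weak limit $\bbP_\infty$ of the joint law of $(\omega_k, \mathbf{f}_k)$, viewed as a random rooted loop configuration by shifting $\mathbf{f}_k$ to the origin. Compactness in the local topology on loop configurations guarantees that subsequential limits exist. Because $\mathbf{f}_k$ is drawn uniformly from $\Lambda_k(\zero)$ and $\{\Lambda_k(\zero)\}_k$ is F{\o}lner, the limit $\bbP_\infty$ is automatically invariant under all translations of~$\bbH$: any fixed translate can be absorbed by shifting the choice of $\mathbf{f}_k$, with the boundary correction being negligible as $k\to\infty$. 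To upgrade $\bbP_\infty$ to a Gibbs measure — thermodynamic limits of loop measures may a priori fail the DLR equations because of the emergence of multiple bi-infinite paths — I would invoke Proposition~\ref{prop:gibbs}, which establishes the Gibbs property in the translation-invariant case via a Burton--Keane-type argument.

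With a translation-invariant Gibbs measure $\bbP_\infty$ in the regime~\eqref{eq:theorem-1-regime} in hand, I would apply Theorem~\ref{thm:long-around-1-inf-vol} to conclude that $\bbP_\infty$-almost surely, either a bi-infinite path exists or the origin is surrounded by infinitely many nested loops. In the first scenario, the bi-infinite path forces the pre-limiting configuration to contain a loop meeting both $\Lambda_r(\mathbf{0})$ and $\Lambda_R(\mathbf{0})^c$ for any fixed $r$ and all sufficiently large $R$, realising $C_{r,R}(\mathbf{0})$. In the second, for each fixed $r$, one of the nested loops surrounds $\Lambda_r(\mathbf{0})$; since every loop has finite diameter, it sits inside $\Lambda_R(\mathbf{0})\setminus\Lambda_r(\mathbf{0})$ for all sufficiently large $R$, yielding $S_{r,R}(\mathbf{0})$. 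Because $C_{r,R}\cup S_{r,R}$ is a local event determined by a finite window around the root, weak convergence transfers these infinite-volume statements back to the pre-limiting probabilities $\mathbb{P}[C_{r,R}(\mathbf{f}_k)\cup S_{r,R}(\mathbf{f}_k)]$, establishing the first assertion.

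For the non-uniform-integrability statement, I would argue by contradiction. If $\{\calL(\mathbf{f}_k)\}$ were uniformly integrable, one would have the $L^1$-convergence $\mathbb{E}[\calL(\mathbf{f}_k)]\to\mathbb{E}_\infty[\calL(\mathbf{0})]<\infty$. On the other hand, the first assertion guarantees that with probability approaching one, a loop of length $L(r,R)\to\infty$ exists within~$\Lambda_R(\mathbf{f}_k)$: a surrounding loop of length $\gtrsim r$ from $S_{r,R}$, or a crossing loop of length $\gtrsim R-r$ from $C_{r,R}$. Such a loop borders at least $L(r,R)$ faces, each with $\calL\geq L(r,R)$. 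A mass-transport / double-counting argument exploiting the uniform choice of $\mathbf{f}_k$ then converts the resulting lower bound on the density of faces bordering long loops into a divergent lower bound on $\mathbb{E}[\calL(\mathbf{f}_k)]$ as $r,R\to\infty$, contradicting uniform integrability.

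The main obstacle is verifying the Gibbs property of the translation-invariant thermodynamic limit (Proposition~\ref{prop:gibbs}): this is a recurring subtlety in loop models, where the long-range dependence permitted by bi-infinite paths obstructs a direct DLR verification, and a Burton--Keane-type ergodicity argument is required to rule out the pathology of multiple infinite paths. A secondary subtlety is the counting step in the non-uniform-integrability assertion, since $C_{r,R}$ and $S_{r,R}$ produce loops \emph{near} the root whereas $\calL$ records only loops that actually \emph{border} the root; the mass-transport step is exactly what bridges this gap.
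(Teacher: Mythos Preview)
Your argument for the first assertion is correct and is essentially the paper's proof: recenter around $\mathbf{f}_k$, extract a subsequential weak limit, invoke Proposition~\ref{prop:gibbs} to obtain a translation-invariant Gibbs measure, apply Theorem~\ref{thm:long-around-1-inf-vol}, and pass the local events $C_{r,R}\cup S_{r,R}$ back through the weak convergence. (A small point you leave implicit: since only subsequential limits exist, one should first pass to a subsequence realising the $\liminf_k$ and then to a further weakly convergent subsequence.)

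Your argument for non-uniform-integrability has a genuine gap. The mass-transport you sketch --- ``a long loop within $\Lambda_R(\mathbf{f}_k)$ borders $\gtrsim L$ faces with $\calL\ge L$, hence $\mathbb{E}[\calL(\mathbf{f}_k)]$ diverges'' --- does not close. On $S_{r,R}$ the surrounding loop has length $\gtrsim r$ but the nearest face bordering it can be anywhere in $\Lambda_R(\mathbf{f}_k)$; the transport therefore uses a window of size $|\Lambda_R|$ and only yields $\mathbb{E}[\calL]\gtrsim r/R^2$. Since the first assertion gives no control on how $R$ depends on $r$, this need not diverge. More fundamentally, you aim for $\mathbb{E}[\calL(\mathbf{f}_k)]\to\infty$, which is strictly stronger than non-UI and is not what the argument produces.

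The paper repairs this by splitting cases. If $\lim_R\liminf_k\mathbb{P}[C_{r,R}(\mathbf f_k)]>0$ for some fixed $r$, the crossing loop comes within distance $r$ of the root and has length $\gtrsim R$; transporting through the \emph{fixed} window $\Lambda_r$ gives $\mathbb{P}[\calL(\mathbf f_k)>cR]\ge c(r)>0$ for all $R$, so $\{\calL(\mathbf f_k)\}$ is not even tight. Otherwise, $S_{r,R}$ carries the mass for every $r$, and here the decisive extra ingredient is isoperimetry: the outermost loop surrounding $\mathbf f_k$ encloses $\calV(\mathbf f_k)\ge|\Lambda_r|$ faces, and the deterministic inequality
\[
\mathbb{P}\big[\calV(\mathbf f_k)>A\big]\ \le\ C\,\mathbb{E}\big[\calL(\mathbf f_k)\,\mathbf 1_{\calL(\mathbf f_k)>c\sqrt{A}}\big]
\]
(obtained by writing $\sum_\ell \mathrm{Area}(\ell)\mathbf 1_{\mathrm{Area}>A}\le c\sum_\ell|\ell|^2\mathbf 1_{|\ell|>c\sqrt A}$ and double-counting the face--loop incidences) converts this into a uniform lower bound on the \emph{tail} expectation $\mathbb{E}[\calL\,\mathbf 1_{\calL>M}]$ for every $M$. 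That is precisely the negation of UI. Your sketch is missing both the case distinction and, in the $S$-case, the isoperimetric step that replaces the uncontrolled window $\Lambda_R$ by the intrinsic area--length relation.
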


\subsection{Results for $n=1, x\in (1,\sqrt{3}]$ and for the antiferromagnetic Ising model}

In the next theorem, we show that, when~$1<x\leq \sqrt{3}$, the loop~$O(1)$ model exhibits at least one loop whose diameter is comparable to that of the domain. For a loop $\omega \in \bbH$, we define its diameter to be the diameter of its interior, viewed as a subgraph of $\bbT$.

\begin{thm}\label{thm:Ising-antifer}
	There exists a constant $c >0$ such that, for $n=1$ and any~$1<x\leq \sqrt{3}$, any~$k>2$ and any loop configuration~$\xi$ on~$\bbH$ without bi-infinite paths,
	\[
		\bbP_{\Lambda_{2k}(\zero),1,x}^\xi ( \zero \text{ is surrounded by a loop of diameter greater than } k) \geq c.
	\]
\end{thm}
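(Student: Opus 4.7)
The plan is to combine the XOR trick with a reduction to subcritical site percolation on an auxiliary graph, exploiting the Ising-model structure that is specific to $n=1$. The starting point is the well-known correspondence: for $n = 1$, the loop $O(1)$ measure is the domain-wall representation of the Ising model on the triangular lattice $\bbT$ (the face-lattice of $\bbH$), with inverse temperature $\beta = -\tfrac{1}{2}\log x$. The range $1 < x \le \sqrt{3}$ thus corresponds to antiferromagnetic couplings with $|\beta| \in (0, \beta_c^{\bbT}]$, where $\beta_c^{\bbT} = \tfrac{1}{4}\log 3$ is the ferromagnetic critical inverse temperature. At the endpoint $x = 1$, the spins are i.i.d.\ uniform and the domain walls form critical site percolation on $\bbT$, which produces a loop around $\zero$ of diameter $>k$ with uniformly positive probability by classical RSW theory. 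The aim is to transfer this conclusion throughout $x \in (1, \sqrt{3}]$.

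The transport mechanism is the identity $\omega(\sigma_1 \sigma_2) = \omega(\sigma_1) \oplus \omega(\sigma_2)$ for the domain walls of the pointwise product of Ising spin configurations. I would sample $\sigma$ from the AF Ising measure at $\beta$ on $\Lambda_{2k}(\zero)$ and, independently, a reference configuration $\tau$ from a measure whose domain-wall loops $\omega(\tau)$ have diameter much smaller than $k$ with overwhelming probability, yet for which the product law $\omega(\sigma\tau)$ is shown to contain a loop around $\zero$ of diameter $>2k$ with uniformly positive probability. A natural candidate for $\tau$ is a ferromagnetic Ising configuration at a suitable $\beta' > 0$ chosen so that, after the gauge transformation $\sigma \mapsto \sigma\tau$, the marginal law of $\sigma\tau$ becomes either a critical-percolation-like measure (so that long loops follow from RSW) or another loop $O(1)$ measure in a regime already handled by the dichotomy result of \cite{DumGlaPel17}. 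Given such a $\tau$, the XOR identity ``short configuration $\oplus$ long loop contains a long loop'' immediately yields a loop around $\zero$ of diameter $>k$ in $\omega(\sigma)$.

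The two inputs---shortness of $\omega(\tau)$ and longness of $\omega(\sigma\tau)$---are each reduced, following the general strategy announced in the abstract, to subcriticality of site percolation on an auxiliary planar random graph built via the Edwards--Sokal expansion of the coupled $(\sigma,\tau)$ system, the Chayes--Machta expansion being trivial for $n=1$. The auxiliary graph inherits planarity and bounded local structure from $\bbH$ and $\bbT$, and its law arises as a Benjamini--Schramm limit of finite random planar graphs; the recent percolation-threshold bound for such limits then gives the required subcriticality. The main obstacle is precisely that the argument must cover the full range $x \in (1, \sqrt{3}]$ rather than merely a neighborhood of $x = 1$: the endpoint $x = \sqrt{3}$ corresponds to $|\beta| = \beta_c^{\bbT}$, where the auxiliary percolation process is at its densest and the Benjamini--Schramm bound must be as sharp as possible. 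A secondary difficulty is designing the reference $\tau$ so that the competing requirements of short $\omega(\tau)$-loops and long $\omega(\sigma\tau)$-loops are met simultaneously; it is natural to expect that this balance is precisely what forces the cutoff to occur at $x = \sqrt{3}$ rather than further into the antiferromagnetic regime.
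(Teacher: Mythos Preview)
Your proposal has genuine gaps. The central one is that the Benjamini--Schramm percolation-threshold input only guarantees subcriticality below some fixed small universal $p_0$; this is exactly why that machinery is confined in the paper to a $\delta$-neighbourhood of $(n,x)=(1,1)$. For $x$ bounded away from $1$ the Edwards--Sokal defect-edge density is bounded away from zero (of order $1-1/x$ or $x-1$, depending on how one expands), so at $x=\sqrt{3}$ it sits far above any $p_0$ that argument can deliver. You acknowledge the obstacle but give no mechanism to close it. A second gap is that the product-spin construction is underspecified: if $\sigma$ is antiferromagnetic Ising and $\tau$ is an independent ferromagnetic Ising, the law of $\sigma\tau$ is not another Ising or loop $O(1)$ measure, so neither the dichotomy of \cite{DumGlaPel17} (which requires $x\le 1$) nor percolation RSW is available for $\omega(\sigma\tau)$ without a separate argument. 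Finally, the XOR step you quote is Lemma~\ref{lem:xor}, which XORs a configuration with a \emph{single} circuit; you are instead XOR-ing two full configurations via $\omega(\sigma)=\omega(\tau)\oplus\omega(\sigma\tau)$, and the inference ``$\omega(\tau)$ short and $\omega(\sigma\tau)$ contains a long loop $\Rightarrow$ $\omega(\sigma)$ contains a long loop'' is not what that lemma says and would need its own justification.

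The paper's route is different and bypasses all of this. It uses Newman's antiferromagnetic Edwards--Sokal coupling directly: for $x>1$ the $\eta$-marginal is the wired \emph{ferromagnetic} FK-Ising measure at parameter $1/x$ conditioned on the decreasing bipartiteness event $\mathrm{Bip}_\xi$. By FKG for FK-Ising, every decreasing event --- in particular ``there is a circuit in $\Lambda_{2k}\setminus\Lambda_k$ crossing no edge of $\eta$'' --- has probability at least its value under the unconditioned wired FK measure at $1/x\in[1/\sqrt{3},1)$, and that is bounded below uniformly by the known absence-of-crossing estimates for subcritical and critical FK-Ising. On that event the XOR with the outermost such circuit preserves the joint $(\sigma,\eta)$ law, and Lemma~\ref{lem:xor} then produces the large loop. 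This FKG comparison, not any percolation-threshold bound, is what makes the full interval $(1,\sqrt{3}]$ accessible: it is exactly the image of $[1/\sqrt{3},1)$ under $x\mapsto 1/x$.
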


The loops of the loop~$O(1)$ model can be represented as the domain walls of an Ising model defined on the faces of $\bbH$. To be more precise, we define the Ising model: let~$\tau$ be a spin configuration on vertices of~$\bbT$ (or, equivalently, the faces of~$\bbH$), that takes a value~$+1$ or~$-1$ at each vertex. Given a domain~$\calD$ in~$\bbH$, the Ising model with parameter~$\beta$ on the faces of~$\calD$ with boundary conditions~$\tau$ is supported on spin configurations~$\sigma$ that coincide with~$\tau$ outside of~$\calD$ and is given by
\begin{align}\label{eq:def-ising3}
	\mathrm{Ising}_{\calD,\beta}^\tau(\sigma) &= \frac{1}{Z_{\calD,\beta}^\tau}\cdot \exp\left( \beta \cdot \sum_{u\sim v} \sigma(u)\sigma(v) \right),
\end{align}
where~$Z_{\calD,\beta}^\tau$ is a normalising constant and the sum runs over pairs of adjacent faces, at least one of which is in~$\calD$. 

When positive, the parameter $\beta$ should be viewed as the inverse temperature for a ferromagnetic interaction model. In this case, the model is positively associated and satisfies Griffiths' correlation inequalities that greatly aid the analysis; see eg.~\cite{FriVel17} for an introduction. On bipartite graphs, the models at~$\beta$ and~$-\beta$ are equivalent. Here, we focus on the antiferromagnetic case~$\beta<0$ on the triangular lattice, where much less is known.

An Ising configuration $\tau$ can be mapped to a loop configuration $\omega$ by considering the set of edges in $\bbH$ which separate opposite spins. The induced measure on loop configurations is precisely the loop~$O(1)$ measure with $x = e^{-2\beta}$. Thus, the correspondence is such that the Ising model is ferromagnetic when $x < 1$, and antiferromagnetic when $x > 1$. Under this correspondence, we find that the above theorem also has consequences for the antiferromagnetic Ising model on the triangular lattice:

\begin{cor}\label{cor:IsingSpinCircuits}
	Let $\mathrm{Minus}_k$ be the event that there exists a connected component of~$\{v \in \mathbb{T} : \sigma_v = -1\}$ of diameter at least $k$ that either contains $\zero$ or surrounds $\zero$. Then, there exists a constant $c>0$ such that, for any $\beta \in [-\tfrac14{\log 3},0)$, any~$k>0$ and any boundary conditions~$\tau$,
	\[
		\mathrm{Ising}_{\Lambda_{2k}(\zero),\beta}^\tau ( \mathrm{Minus}_k ) \geq c.
	\]
\end{cor}

Note that our approach does not produce RSW statements for the antiferromagnetic Ising model. Specifically, we do not prove the surrounding connected component ({\em cluster}) can be found in an annulus of a fixed aspect ratio. We believe this holds in the entire antiferromagnetic regime~$\beta < 0$ but this remains completely open. On the contrary, in the ferromagnetic case this is well-understood: when~$\beta \in [0,\tfrac14{\log 3}]$ the RSW holds (see eg.~\cite{Tas16} for a general strategy) and when~$\beta \in [0,\tfrac14{\log 3}]$ the model is in a strongly ordered phase~\cite{AizBarFer87}.

Our analysis also implies the following result on the Gibbs measures of the loop~$O(1)$ and Ising models.
Unlike in Theorem~\ref{thm:long-around-1-inf-vol}, translation invariance is not assumed.

\begin{cor}\label{cor:IsingGibbs}
	Let $\bbP_{1,x}$ be a Gibbs measure of the loop $O(1)$ model with~$1<x\leq \sqrt{3}$. Then,
  \begin{equation*}
  	\bbP_{1,x}\,\big(\{\text{$\exists$ bi-infinite path}\} \cup \{ \text{every face is surrounded by  infinitely many loops} \}\big)=1.
  \end{equation*}
  Consequently, if~$-\tfrac14\log 3 \leq \beta <0$, then any Gibbs measure~$\mathrm{Ising}_{\beta}$ of the Ising model with parameter~$\beta$ either includes a bi-infinite interface between pluses and minuses, or every face is surrounded by infinitely many finite interfaces.
\end{cor}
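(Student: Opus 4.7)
My plan is to deduce Corollary~\ref{cor:IsingGibbs} from the finite-volume estimate of Theorem~\ref{thm:Ising-antifer} using the DLR equations and the triviality of the tail $\sigma$-algebra under extremal Gibbs measures. It suffices to treat the loop $O(1)$ statement; the Ising analogue follows from the standard bijection between spin configurations on $\bbT$ (up to a global sign) and loop configurations on $\bbH$ given by the edges of disagreement, under which bi-infinite interfaces correspond to bi-infinite paths and interfaces surrounding a face correspond to loops surrounding it, with parameters related by $x = e^{-2\beta}$.

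Let $\bbP_{1,x}$ be a Gibbs measure. By the extremal decomposition, it is enough to treat extremal $\bbP_{1,x}$, whose tail $\sigma$-algebra $\bigcap_\Lambda \calF_{\Lambda^c}$ is trivial. For each face $f$ of $\bbH$, let $L_k(f)$ be the event that some loop of $\omega$ of diameter at least $k$ surrounds $f$, let $L_\infty(f) := \bigcap_k L_k(f)$, and let $B$ be the event that $\omega$ contains a bi-infinite path. The first step is to show that $L_\infty(f)$ and $B$ are both tail events. For $L_\infty(f)$: modifying $\omega$ on a finite set $\Lambda$ alters only those cycles using an edge in $\Lambda$, which are finitely many, and by planarity cycles surrounding $f$ are nested, so $L_\infty(f)$ is precisely the event that infinitely many loops surround $f$ --- manifestly invariant under finite modification. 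For $B$: the constraint that every vertex of $\omega$ has degree $0$ or $2$ implies, via a short pairing argument at $\partial\Lambda$, that the number of bi-infinite paths in $\omega$ is exactly $I + S/2$, where $I$ is the number of bi-infinite components of $\omega|_{\Lambda^c}$ and $S$ is the number of half-infinite rays in $\omega|_{\Lambda^c}$ ending on $\partial\Lambda$; both depend only on $\omega|_{\Lambda^c}$. By extremality, both $\bbP_{1,x}(B)$ and $\bbP_{1,x}(L_\infty(f))$ lie in $\{0,1\}$.

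If $\bbP_{1,x}(B) = 1$, the first alternative of the corollary holds. Otherwise $\bbP_{1,x}(B^c) = 1$, so almost surely $\omega$ is a union of finite cycles. In this case, for each $k$ and each face $f$, DLR identifies the conditional distribution of $\omega$ inside $\Lambda_{2k}(f)$ given $\calF_{\Lambda_{2k}(f)^c}$ as $\bbP_{\Lambda_{2k}(f),1,x}^{\omega}$, and the boundary datum $\omega$ is a loop configuration on $\bbH$ containing only finite loops. Theorem~\ref{thm:Ising-antifer}, applied at $f$ via the translation symmetry of $\bbH$, then yields $\bbP_{1,x}(L_k(f) \mid \calF_{\Lambda_{2k}(f)^c}) \geq c$ almost surely, and hence $\bbP_{1,x}(L_k(f)) \geq c$ for every $k$. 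Since $L_k(f)$ decreases in $k$, monotone convergence gives $\bbP_{1,x}(L_\infty(f)) \geq c > 0$, and tail triviality then forces $\bbP_{1,x}(L_\infty(f)) = 1$. A countable intersection over faces completes the argument. The main technical subtlety is verifying that $B$ is truly a tail event --- this relies on the $I + S/2$ identity for loop configurations --- after which the rest is routine.
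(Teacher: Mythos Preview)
Your proof is correct and follows essentially the same approach as the paper: extremal decomposition, the DLR property combined with Theorem~\ref{thm:Ising-antifer}, and tail triviality under extremal Gibbs measures. Your execution is in fact slightly more streamlined than the paper's, since by working directly with the decreasing events $L_k(f)$ you bypass the intermediate crossing-event argument (the $C_{r,R}$ step) that the paper uses to upgrade ``large-diameter loop surrounding $\zero$'' to ``loop surrounding $\Lambda_r(\zero)$''.
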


\subsection{Results for $n\in [1,2],x=1$ on a torus}

In the next theorem, we show that, when~$n\in [1,2], x=1$, the loop~$O(n)$ model on a torus has a non-contractible loop with uniformly positive probability. Denote by~$\tor_{k}$ a~$k \times k$ torus obtained by identifying the faces on the opposite sides of the parallelogram domain in~$\bbH$ which consists of the faces $s + t e^{i\pi/3}$, where~$0\leq s,t \leq k$ are integers. The measure~$\bbP_{\tor_{k},n,x}$ on loop configurations on~$\tor_{k}$ is defined analogously to the case of planar domains, though no boundary conditions are necessary. We say that a loop is \emph{non-contractible} if it has a non-trivial homotopy when considered as a subset of a continuous torus.

\begin{thm}\label{thm:x-1}
	For any~$n\in [1,2]$ and~$x=1$ and any $k\geq 1$,
	\[
		\bbP_{\tor_{k},n,1} ( \text{there exists a non-contractible loop}) \geq \tfrac14.
	\]
\end{thm}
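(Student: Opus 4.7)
The plan is to apply the XOR trick at the level of $\bbF_2$-homology of the torus. Since $\bbH$ has maximum degree $3$, every loop configuration $\omega$ on $\tor_{k,\ell}$ has all vertex degrees in $\{0,2\}$ and hence defines a $1$-cycle over $\bbF_2$, with a well-defined class $[\omega]\in H:=H_1(\tor_{k,\ell};\bbF_2)\cong \bbF_2\oplus \bbF_2$. Partition loop configurations into the four strata $A_h:=\{\omega:[\omega]=h\}$, $h\in H$. Since $[\omega]$ equals the $\bbF_2$-sum of the classes of the individual loops of $\omega$, and every contractible loop has trivial class, $[\omega]\neq 0$ forces $\omega$ to contain at least one non-contractible loop. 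It therefore suffices to prove $\bbP_{\tor_{k,\ell},n,1}\bigl[[\omega]\neq 0\bigr]\geq \tfrac14$.

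Fix simple non-contractible cycles $\Gamma_1,\Gamma_2\subset \bbH$ whose classes generate $H$, and set $\Gamma_3:=\Gamma_1\XOR \Gamma_2$. Using the max-degree-$3$ constraint again, the XOR of two loop configurations remains a loop configuration; hence $G:=\{\emptyset,\Gamma_1,\Gamma_2,\Gamma_3\}\cong \bbF_2^2$ acts freely on loop configurations by XOR, partitioning them into orbits of size $4$ with exactly one representative in each $A_h$. The theorem would then follow from the orbitwise inequality
\[
n^{\ell(\omega_0)}\leq 3\bigl(n^{\ell(\omega_1)}+n^{\ell(\omega_2)}+n^{\ell(\omega_3)}\bigr),
\]
where $\omega_0,\omega_1,\omega_2,\omega_3$ are the four representatives of an orbit with $[\omega_0]=0$: summing over orbits yields $Z_0\leq 3\sum_{h\neq 0}Z_h$, equivalently $\bbP[A_0]\leq 3/4$, which is what we want.

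The orbit inequality is trivial for $n=1$. For $n\in(1,2]$, since $3/n\geq 1$, it would suffice to show that in each orbit some $i\in\{1,2,3\}$ satisfies $\ell(\omega_i)\geq \ell(\omega_0)-1$. I expect this last step to be the main obstacle. A naive fixed choice of $\Gamma_1,\Gamma_2$ is defeated by densely packed configurations $\omega_0$ (e.g.\ fully packed tilings by short hexagonal loops, which exist on suitable tori) whose loops may be simultaneously threaded by all three $\Gamma_i$, causing many loop mergers. The natural remedy is to exploit the translation invariance of $\bbP_{\tor_{k,\ell},n,1}$: rather than using a fixed pair $\Gamma_1,\Gamma_2$, one averages the target inequality over all lattice translates of representative cycles in each non-trivial homology class and applies Jensen's inequality to the convex function $x\mapsto n^x$. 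The task then reduces to bounding the averaged loop-count change $\mathbb{E}_T[\ell(\omega_0\XOR T\Gamma_h)]-\ell(\omega_0)$ uniformly in $\omega_0$, for which one shows that a random translate of a short non-contractible cycle lies in ``generic position'' with respect to $\omega_0$ and induces only $O(1)$ net loop mergers on average.
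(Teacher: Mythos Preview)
Your homological setup is clean and the reduction to $\bbP[[\omega]\neq 0]\ge 1/4$ is a reasonable strengthening, but the final step has a genuine gap. The claim that a random translate of a short non-contractible cycle ``induces only $O(1)$ net loop mergers on average'' is false uniformly in $\omega_0$. Take a torus on which one can tile by disjoint single-hexagon loops (one colour class of a proper $3$-colouring of the faces), so that $\omega_0$ has $\ell(\omega_0)=\Theta(k\ell)$ and $[\omega_0]=0$. Any non-contractible simple cycle $\Gamma$ on $\tor_{k,\ell}$ has length at least $\min(k,\ell)$, and for this $\omega_0$ every translate $T\Gamma$ shares an edge with $\Theta(|\Gamma|)$ of the small hexagons. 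Each such hexagon is ``threaded'' (its path after deleting the shared edge simply replaces that edge), so after XOR all threaded hexagons merge into a single loop; hence $\ell(\omega_0\XOR T\Gamma)-\ell(\omega_0)=-\Theta(\min(k,\ell))$ for \emph{every} translate and every $h\in H\setminus\{0\}$. Averaging over $T$ does nothing, and Jensen then gives only $Z_h\ge n^{-\Theta(\min(k,\ell))}Z_0$, which is useless. Worse, you cannot dismiss such $\omega_0$ as atypical: for $n>1$ they carry weight $n^{\Theta(k\ell)}$ and may well dominate $Z_0$. So the orbitwise (or averaged) inequality $n^{\ell(\omega_0)}\le 3\sum_{h\neq 0}n^{\ell(\omega_h)}$ is not available, and I do not see how to rescue a direct comparison of the four homology sectors.

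The paper avoids this obstacle entirely by not comparing $n^{\ell}$ across an XOR. It expands in $n$ via Chayes--Machta: colour each loop blue with probability $(n-1)/n$ and red otherwise, so that conditionally on $\omega_b$ the red configuration $\omega_r$ is \emph{uniform} on loop configurations disjoint from $\omega_b$. If one can find a non-contractible circuit $\Gamma$ disjoint from $\omega_b$, then XOR of $\omega_r$ with $\Gamma$ is measure-preserving (no $n^{\ell}$ factor to control), and by the torus version of Lemma~\ref{lem:xor} at least one of $\omega,\ \omega\XOR\Gamma$ has a non-contractible loop. The remaining input is that such a ``blue-free'' circuit exists with probability $\ge 1/2$: a duality lemma gives that either a blue-free or a red-free non-contractible circuit always exists, and for $n\in[1,2]$ red stochastically dominates blue, so blue-free is the likelier of the two. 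This yields $1/2\cdot 1/2=1/4$. The moral is that the expansion absorbs the $n$-dependence into the colouring, after which the XOR step is exact rather than an inequality between loop counts.
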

Note that non-contractible loops are inherently ``long'' in the sense that their length is at least the side-length of the torus. It would be natural to try to extend this statement to planar domains; however, our proof relies on the additional symmetries of the torus.

\subsection{Outline of the main tool: the XOR trick}\label{section:XOR}

The method that is at the heart of the proofs  of this paper is the XOR trick. This trick makes use of the fact that loop configurations form a closed subgroup of $ \{0, 1\}^{E(\bbH)}$, viewed as an Abelian group under component-wise addition modulo $2$. Explicitly, we view a loop configuration $\omega$ as an element of $\{0,1\}^{E(\bbH)}$, where $\omega(e) =1$ if and only if $ e \in \omega$.  For a loop configuration $\omega$ and a simple cycle~$\Gamma$ (that we call {\em circuit} below), define the configuration $\omega \XOR \Gamma$ as the symmetric difference of $\omega$ and~$\Gamma$:
\begin{equation}\label{eq:xor}
	(\omega \XOR \Gamma ) (e) := \omega(e) + \1_{e \in \Gamma}\quad \text{ (mod 2)} \qquad \qquad \forall e \in E(\bbH).
\end{equation}
Each vertex of~$\bbH$ has an even degree in~$\omega$ and~$\Gamma$; therefore, the same is true for~$\omega \XOR \Gamma$, meaning that, given $\Gamma$, the XOR operation is an involution on the set of loop configurations.

The next combinatorial lemma describes how the XOR operation produces large loops.

\begin{lem}\label{lem:xor}
Let $\omega$ be a loop configuration without bi-infinite paths. Then,	for any circuit~$\Gamma$ that surrounds~$\Lambda_k(\zero)$, either~$\omega$ or~$\omega\XOR\Gamma$ has a loop of diameter at least~$k$ that surrounds~$\zero$.
\end{lem}

\begin{proof}
Let~$\omega_\Gamma$ denote the set of loops in~$\omega$ that intersect~$\Gamma$. Since
	\[
		(\omega_\Gamma \XOR \Gamma) \XOR \Gamma  = \omega_\Gamma,
	\]
	all loops in~$\omega_\Gamma \XOR \Gamma$ must intersect~$\Gamma$. Note that
	\[
		\omega \XOR \Gamma  = \Gamma\XOR \omega_\Gamma\XOR (\omega\setminus\omega_\Gamma)= (\Gamma\XOR \omega_\Gamma) \cup (\omega\setminus\omega_\Gamma).
	\]	
	
	If no loop in~$\omega$ surrounding~$\zero$ has diameter at least~$k$, then no loop in $\omega_\Gamma$ surrounds~$\zero$. For any finite loop configuration~$\eta$, there exists a unique~$\sigma_\eta$ assigning~$1$ and~$-1$ to the faces of~$\bbH$ in such a way that:
	\begin{itemize}
		\item for any two adjacent faces~$u$ and~$v$ of~$\bbH$, we have~$\sigma_\eta(u)\neq\sigma_\eta(v)$ if and only if~$\eta$ contains the common edge of~$u$ and~$v$;
		\item the value of~$\sigma_\eta$ on the unique infinite connected component in~$\bbH\setminus \eta$ is~$1$.
	\end{itemize}
	Note that the XOR operation has the following interpretation:
	\[
		\sigma_{\Gamma\XOR \omega_\Gamma}=\sigma_\Gamma \sigma_{\omega_\Gamma}.
	\]
	Since~$\Gamma$ surrounds~$\zero$, but no loop in~$\omega_\Gamma$ surrounds~$\zero$,
	\[
		\sigma_{\Gamma\XOR \omega_\Gamma}(\zero) 
		= \sigma_\Gamma(\zero) \sigma_{\omega_\Gamma}(\zero) = -1.
	\]
	This means that~$\Gamma\XOR \omega_\Gamma$ contains a loop that surrounds~$\zero$.
	The diameter of this loop must be at least $k$ since each loop in~$\Gamma\XOR \omega_\Gamma$ intersects~$\Gamma$.

\end{proof}

In order to illustrate the use of Lemma~\ref{lem:xor}, we provide a short `folkloric' proof that, in the $O(1)$ loop model with $x=1$ (which corresponds to critical percolation on $\bbT$), the origin is surrounded by arbitrarily large loops, almost surely. This conclusion is not new; in fact, it is a weaker version of the classical Russo--Seymour--Welsh (RSW) estimates for critical percolation \cite{Rus78,SeyWel78}. Nevertheless, the main point here is that the XOR trick is actually more robust than one might think and is applicable in settings where relatively little is understood about the underlying probability measure on loop configurations. A heuristic description of the argument can be seen in Figure \ref{fig:PercXor}.

\begin{prop}\label{prop:perco}
	For any~$k$ and any loop configuration~$\xi$ with no bi-infinite paths, one has
	\[
		\bbP_{\Lambda_k(\zero) ,1,1}^\xi \, \big( \text{there exists a loop of diameter $\geq k$ surrounding $\zero$}\big) \geq \tfrac{1}{2}.
	\]
\end{prop}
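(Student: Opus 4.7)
The strategy is to exploit Lemma~\ref{lem:xor} together with the fact that, at $n=x=1$, every loop configuration in $\calE(\Lambda_k(\zero),\xi)$ is assigned the same Boltzmann weight $x^{|\omega|}n^{\ell(\omega)}=1$. Consequently $\bbP^\xi_{\Lambda_k(\zero),1,1}$ is just the uniform measure on this finite set, and any combinatorial bijection of $\calE(\Lambda_k(\zero),\xi)$ to itself is automatically measure-preserving.

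Concretely, I would fix a deterministic simple circuit $\Gamma$ lying in $\Lambda_k(\zero)$ that surrounds $\zero$ and has diameter comparable to $k$---for example, the outer hexagonal shell of the domain, which surrounds $\Lambda_{k-1}(\zero)$. Define $T_\Gamma(\omega):=\omega\XOR\Gamma$. Since the XOR of two configurations with even degree at every vertex still has even degree, $T_\Gamma(\omega)$ is again a loop configuration; and since $\Gamma\subset\Lambda_k(\zero)$, the restriction of $T_\Gamma(\omega)$ to the complement of the domain still coincides with $\xi$. Thus $T_\Gamma$ is an involution of $\calE(\Lambda_k(\zero),\xi)$, and in particular measure-preserving for the uniform law.

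Applying Lemma~\ref{lem:xor} to each $\omega$, either $\omega$ or $T_\Gamma(\omega)$ contains a bi-infinite path or a loop of diameter at least $k$ surrounding $\zero$. The bi-infinite alternative is vacuous: $\omega$ and $T_\Gamma(\omega)$ both agree with $\xi$ outside the finite set $\Lambda_k(\zero)$, and $\xi$ was assumed to contain no infinite paths. Denoting by $A$ the event that the sampled configuration contains such a surrounding loop, we therefore have $A\cup T_\Gamma^{-1}(A)=\calE(\Lambda_k(\zero),\xi)$. Measure-preservation of $T_\Gamma$ then yields
\[
1=\bbP^\xi(A\cup T_\Gamma^{-1}(A))\le \bbP^\xi(A)+\bbP^\xi(T_\Gamma^{-1}(A))=2\,\bbP^\xi(A),
\]
so $\bbP^\xi(A)\ge 1/2$, as required.

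The only obstacle is cosmetic: Lemma~\ref{lem:xor} requires $\Gamma$ to surround $\Lambda_k(\zero)$ in order to force the resulting loop to have diameter at least $k$, whereas for $T_\Gamma$ to preserve the sample space we need $\Gamma$ to lie \emph{inside} the domain. Taking $\Gamma$ to be the outermost hex circuit of $\Lambda_k(\zero)$ reconciles the two up to a constant of order one in the diameter bound, which is inconsequential for the qualitative statement. Apart from this minor geometric adjustment, the argument is essentially immediate since $n=x=1$ trivialises both the edge and the loop weights; the difficulty addressed in the later sections of the paper is precisely how to recover an analogue of the measure-preservation property of $T_\Gamma$ when $(n,x)$ deviates from $(1,1)$, where one must compensate for the factors $x^{|\Gamma \XOR \omega|-|\omega|}$ and $n^{\ell(\Gamma \XOR \omega)-\ell(\omega)}$ using graphical expansions.
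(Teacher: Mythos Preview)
Your proposal is correct and mirrors the paper's proof almost verbatim: the paper also sets $\Gamma$ to be the boundary of $\Lambda_k(\zero)$, observes that the XOR operation is measure-preserving at $n=x=1$, and combines Lemma~\ref{lem:xor} with a union bound to obtain $2\,\bbP^\xi(A)\ge 1$. The geometric tension you flag---that $\Gamma$ must both lie in the domain and surround $\Lambda_k(\zero)$---is resolved by the same choice of $\Gamma$ and treated just as tacitly in the paper.
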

The limsup of these events is equal to the event that there are infinitely many loops surrounding $\zero$. Since this limsup is a translation-invariant event, ergodicity, Fatou's lemma, and the above proposition imply that the loop~$O(1)$ model with $x=1$ has infinitely many loops surrounding the origin, almost surely. 
\begin{proof}
	Let~$A_k$ be the event that there exists a loop of diameter $\geq k$ surrounding $\zero$. Set~$\Gamma$ to be the boundary of~$\Lambda_k(\zero)$. Since the~ XOR-operation is measure-preserving, Lemma~\ref{lem:xor}, together with the union bound, implies that
	\begin{align*}
		2\cdot \bbP_{\calD,1,1}^\xi(\omega\in A_k) &= \bbP_{\calD,1,1}^\xi(\omega\in A_k) + \bbP_{\calD,1,1}^\xi(\omega \XOR \Gamma\in A_k) \\ & \geq \bbP_{\calD,1,1}^\xi(\omega \in A_k \text{ or } (\omega \XOR \Gamma) \in A_k) = 1.
	\end{align*}
\end{proof}

\begin{figure}
\begin{center}
	\includegraphics[width=0.27\textwidth]{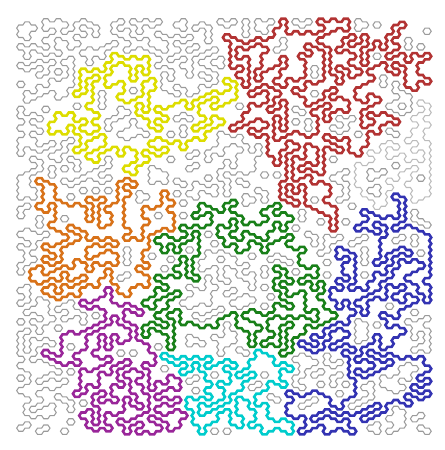}\quad
	\includegraphics[width=0.27\textwidth]{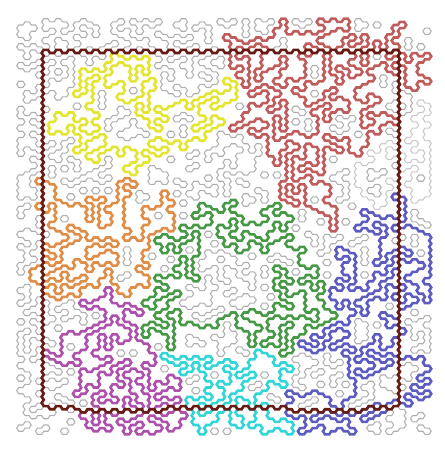}\quad
	\includegraphics[width=0.27\textwidth]{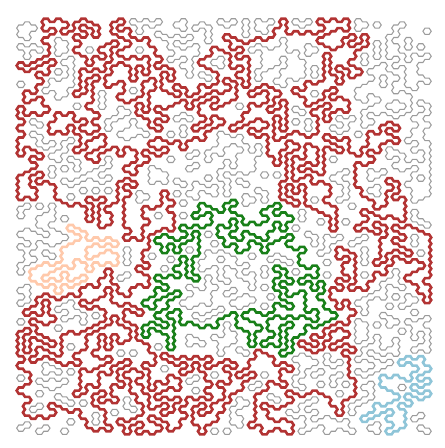}
	\end{center}
	\caption{\emph{Left:} a sample of of the loop $O(1)$ model with $x=1$ and empty boundary conditions, with the longest loop in red. \emph{Middle:} a large loop superimposed on the configuration. \emph{Right:} the result of the XOR operation between the configuration and the loop. Again, the longest loop is in red. Note that parts of the orange and blue loops are cut into smaller loops by the operation.}\label{fig:PercXor}
\end{figure}

Compared to this `toy' case~$n=x=1$, the difficulty in the proofs of our main results lies in the fact that, as soon as~$n > 1$ or~$x < 1$, the measure~$\bbP_{\calD,n,x}^\xi$ is not uniform, and hence the XOR operation is not measure-preserving. In order to surmount this difficulty, we consider an expansion around~$n=x=1$ in both the $n$ and $x$ variables. We say that any loop of~$\omega$ is a {\em defect loop} with probability $(n-1)/n$, and that any edge in the complement of~$\omega$ is a {\em defect edge} with probability $1-x$; the process is defined independently for each loop and edge. As we will show below, given a defect subgraph $\eta$, the distribution of $\omega \setminus \eta$ is uniform on the set of loop configurations on the complement of $\eta$. In particular, $\omega\setminus \eta$ is invariant under the~XOR operation with any circuit~$\Gamma$ which is disjoint from~$\eta$. Therefore, our goal is to show that these defect subgraphs allow for a large vacant circuit. We work with defect edges to prove Theorem~\ref{thm:Ising-antifer}, defect loops to prove Theorem~\ref{thm:x-1}, and both defect edges and loops to prove Theorem~\ref{thm:long-around-1-inf-vol}; we present the theorems in this order, building up the complexity as the paper progresses.   

Development in~$n$ appears in Chayes--Machta~\cite{ChaMac98}, whereas expansion in~$x$ corresponds to the classical Edwards--Sokal coupling~\cite{EdwSok88} and its generalization by Newman~\cite{New90,New94} to the antiferromagnetic case. As far as we are aware, this paper is the first usage of both expansions simultaneously.  

\subsection*{Acknowledgements}
The authors would like to thank Michael Krivelevich for sharing knowledge about planar graphs, and Ioan Manolescu and Yinon Spinka for fruitful discussion of the work presented here and help with the figures.

AG, MH, and RP were supported in part by the Israel Science Foundation grant 861/15 and the European Research Council starting grant 678520 (LocalOrder). NC was supported by the Israel Science Foundation grant number 1692/17. AG was supported by the Swiss NSF grants P300P2\_177848, ~P3P3P2\_177850, P2GEP\_165093 and by the Austrian Science Fund (FWF) 10.55776/P34713. MH was supported by the Zuckerman Postdoctoral Fellowship. RP and MH were further supported by the Israel Science Foundation grant 1971/19.

\section{$n=1, x\in (1,\sqrt{3}]$}

The aim of this section is to prove Theorem~\ref{thm:Ising-antifer}. The antiferromagnetic Ising model, which corresponds to the loop~O(1) model with $x >1$, can be represented as a conditioned FK-Ising model using Newman's extension~\cite{New90,New94} of the classical Edwards--Sokal coupling. Known results on the standard FK-Ising model will then imply that the XOR trick is applicable for the loop~$O(1)$ model with~$x\in (1,\sqrt{3}]$, thus proving Theorem~\ref{thm:Ising-antifer}.

We fix~$n=1$ until the end of this section.

\subsection{Link to the Ising model}\label{sec:ising}

The loop~$O(1)$ model on a domain $\calD$ of~$\bbH$ can be viewed as a domain wall, or low-temperature, representation of the Ising model on the faces of the lattice; see for example~\cite[Section 3.7.2]{FriVel17}. The next lemma states this connection explicitly.

For any spin configuration $\tau: \bbT \rightarrow \{-1,1\}$ and a domain $\calD$, we set $\calS(\calD,\tau)$ to be the set of spin configurations that match $\tau$ on the faces of $\calD^c$. Let $\mathrm{DW}(\tau)$ be the subgraph of $\bbH$ where $e \in \mathrm{DW}(\tau)$ if and only if the edge borders on two faces with different values of $\tau$.

\begin{lem}\label{lem:ising-o1}
	Let~$\tau: \bbT \rightarrow \{-1,1\}$ be a spin configuration, $\calD$ be a domain, and $\xi = \mathrm{DW}(\tau)$. Then, $\mathrm{DW}$ is a bijection from $\calS(\calD,\tau)$ to $\calE(\calD,\xi)$. Moreover, if $\sigma \sim \mathrm{Ising}_{\calD,\beta}^{\tau}$, then $\mathrm{DW}(\sigma) \sim  \bbP_{\calD, 1,x}^{\xi}$, where $\beta = -\tfrac12 \log x$. 
\end{lem}

\begin{proof}
	Since the faces of $\calD^c$ form a connected subset of~$\bbT$, it is standard that~$\mathrm{DW}$ is a bijection.
	Let $\sigma \in \calS(\calD,\tau)$ and set $\omega = \mathrm{DW}(\sigma)$. The probability of $\omega$ under $\bbP_{\calD, 1,x}^\xi$ can be written in the following way:
	\begin{align*}	
	\label{eq:colour_measure}
	\bbP_{\calD, 1,x}^\xi(\omega) &= \frac{1}{Z_{\calD,1,x}^\xi}\cdot x^{|\omega|} =  \frac{1}{Z_{\calD,1,x}^\xi}\cdot x^{\#\{u\sim v \colon \sigma(u) \neq \sigma(v)\}} \\
	&=\frac{1}{Z_{\calD,1,x}^\xi}\cdot \exp\Big( \log x \cdot  \sum_{u\sim v} \1_{\{\sigma(u) \neq \sigma(v)\}}\Big)\\
	&= \frac{1}{Z_{\calD,\beta}^{\tau}} \exp\Big( \beta\sum_{u\sim v} \sigma(u)\sigma(v)\Big) = \mathrm{Ising}_{\calD,\beta}^\tau(\sigma),
\end{align*}
where all sums are taken over edges $(u,v)$ with at least one vertex in $\calD$, and we used that~$\1_{\{\sigma(u) \neq \sigma(v)\}} =\tfrac12 - \tfrac12\sigma(u)\sigma(v) $ and~$\beta = -\tfrac12 \log x$.
\end{proof}

Observe that $x \in (0,1)$ implies~$\beta >0$ and thus the law of $\sigma$ is that of a ferromagnetic Ising model on the faces of $\calD$. The case $x = 1$ corresponds to the uniform measure on all $\pm 1$ spin configurations on the faces of $\calD$ -- that is, to Bernoulli site percolation with parameter~$1/2$ on the faces of $\calD$. Finally, $x > 1$ implies~$\beta <0$ and thus~$\sigma$ has the law of an antiferromagnetic Ising model.

As was first shown by Onsager~\cite{Ons44}, the value $\beta = \frac{\log 3}4 $, or equivalently~$x = \frac{1}{\sqrt3}$, is the critical point for the ferromagnetic Ising model on the triangular lattice  (see~\cite{BefDum12} for the explicit formula on the triangular lattice). Sharpness of the phase transition was established in~\cite{AizBarFer87} and implies that, when~$\beta > \frac{\log 3}4$ (that is~$x < \frac{1}{\sqrt3}$), the model is in an ordered phase, with exponentially small pockets of $-1$ in an environment of $+1$.

At the same time, when $0  \leq \beta \leq \frac{\log 3}4$ (that is $\frac{1}{\sqrt3} \leq x \leq 1$), the model is in a disordered phase, with macroscopic clusters of both $+1$ and $-1$; this is a consequence of  the proof in~\cite{Tas16}.

\subsection{FK-Ising representation in the antiferromagnetic regime}
\label{sec:FK-Ising}

In this section, our aim is to describe an FK-type representation of the loop~$O(1)$ model.

Given a domain~$\calD$, define the dual domain~$\calD^*$ as the subgraph of the triangular lattice~$\bbT$ induced by the set of vertices that correspond to faces bordering at least one edge of $\calD$.

\subsubsection{Ferromagnetic Ising model: $0<x<1$}
Fix a spin configuration $\tau$ and assume $0 <x <1$. We consider a joint law of spin configurations $\sigma \in \calS(\calD,\tau)$ and subgraphs $\eta$ of $\calD^*$ whose marginal on $\sigma$ is precisely the Ising measure. To do so, consider the following expansion: 
\begin{align*}
	\mathrm{Ising}_{\calD,\beta}^\tau(\sigma)
	=& \frac{1}{Z_{\calD,1,x}^\xi}\cdot x^{\#\{u\sim v \colon \sigma(u) \neq \sigma(v)\}} = \frac{x^{|E(\calD)|}}{Z_{\calD,1,x}^\xi}\cdot\left(\tfrac{1}{x}\right)^{\#\{u\sim v \colon \sigma(u) = \sigma(v)\}} \\
	&= \frac{x^{|E(\calD)|}}{Z_{\calD,1,x}^\xi} \cdot \prod_{u\sim v} (1 + (\tfrac{1}{x}-1)\1_{\sigma(u)= \sigma(v)}) \\
	&= \frac{x^{|E(\calD)|}}{Z_{\calD,1,x}^\xi} \cdot \sum_{\eta\subset E(\calD^*)} (\tfrac{1}{x}-1)^{|\eta|}\prod_{uv\in \eta} \1_{\sigma(u)= \sigma(v)},
\end{align*}
where $\xi = \mathrm{DW}(\tau)$, $\beta = - \tfrac{1}{2} \log x$.

The joint law of~$\sigma$ and~$\eta$ is
\begin{equation}
\label{eq:edwards-sokal-fer}
	\mathrm{ES}_{\calD,x}^\xi(\sigma, \eta) = \frac{x^{|E(\calD)|}}{Z_{\calD,1,x}^\xi} \cdot (\tfrac{1}{x}-1)^{|\eta|} \prod_{uv\in \eta} \1_{\sigma(u)= \sigma(v)}.
\end{equation}
Viewing~$\eta$ as a spanning subgraph of~$\calD^*$, the last term is the indicator that~$\sigma$ has constant value on each cluster of~$\eta$. This is the classical Edwards--Sokal coupling ~\cite{EdwSok88} between the Ising model and FK--Ising model (random-cluster model with~$q=2$). 

A particular case of this coupling is given setting $\tau \equiv +1$, or alternatively $\xi = \emptyset$ (wired boundary conditions). 
We define $\varphi^1_{\calD,x}(\eta)$ to be the marginal of $\mathrm{ES}_{\calD,x}^{\emptyset}$ on $\eta$:
\begin{equation}
\label{eq:FK-Ising-fer}
\varphi_{\calD,x}^1(\eta) =  \frac{x^{|E(\calD)|}}{Z_{\calD,1,x}^\emptyset} \cdot (\tfrac{1}{x}-1)^{|\eta|}2^{k^1(\eta)},
\end{equation}
where~$|\eta|$ denotes the number of edges in~$\eta$ and $k^1(\eta)$ is the number of clusters in $\eta$ when all boundary vertices of~$\calD^*$ (that is, faces outside of $\calD$) are identified.

\subsubsection{Antiferromagnetic Ising model: $x>1$}

For this subsection, we assume $x >1$. Following the same steps as for the previous case (as was done by Newman~\cite{New90,New94}), we find that:
\begin{align*}
\mathrm{Ising}_{\calD,\beta}^\tau(\sigma)
	=& \frac{1}{Z_{\calD,1,x}^\xi}\cdot x^{\#\{u\sim v \colon \sigma(u) \neq \sigma(v)\}} = \frac{1}{Z_{\calD,1,x}^\xi} \cdot \prod_{u\sim v} (1 + (x-1)\1_{\sigma(u)\neq \sigma(v)})\\
	=& \frac{1}{Z_{\calD,1,x}^\xi} \cdot \sum_{\eta\subset E(\calD^*)} (x-1)^{|\eta|}\prod_{uv\in \eta} \1_{\sigma(u)\neq \sigma(v)}.
\end{align*}
We again consider the joint law of~$\sigma$ and~$\eta$, which takes a form similar to~\eqref{eq:edwards-sokal-fer}:
\begin{equation}
\label{eq:edwards-sokal-antifer}
	\mathrm{ES}_{\calD,x}^\xi(\sigma, \eta) = \frac{1}{Z_{\calD,1,x}^\xi} \cdot (x-1)^{|\eta|} \prod_{uv\in \eta} \1_{\sigma(u)\neq \sigma(v)}.
\end{equation}
Viewing~$\eta$ as a spanning subgraph of~$\calD^*$, the last term is the indicator that~$\sigma \in \calS(\calD,\tau)$ defines a proper coloring of~$\eta$ in~$+1$ and~$-1$, i.e. any two vertices linked by an edge of~$\eta$ have {\em different} spins in~$\sigma$. Let $\mathrm{Bip}_\xi(\eta)$ denote the event that there exists such a proper coloring.

The marginal distribution on $\eta$ for $x >1$ is given by
\begin{equation}
\label{eq:FK-Ising-antifer}
	\varphi_{\calD,x}^\xi(\eta) = \frac{1}{Z_{\calD,1,x}^\xi}\cdot  (x-1)^{|\eta|}2^{k^1(\eta)} \cdot \1_{\mathrm{Bip}_\xi(\eta)}.
\end{equation}
In particular, we see that, for any $x > 1$, we have the relation
\begin{equation}\label{eq:Ferro-AntiferroRelation}
\varphi_{\calD,x}^\xi(\eta) = \varphi_{\calD,1/x}^1(\eta \, \mid \mathrm{Bip}_\xi(\eta)).
\end{equation}

\subsection{Input from the FK-Ising model}

For~$x<1$, $\varphi_{\calD,x}^{1}$ is the standard wired FK-Ising measure. The classical result of Aizenman, Barsky, and Fernandez~\cite{AizBarFer87} proves that the FK-Ising model undergoes a sharp phase transition at the critical point (see~\cite{DumTas15} for a recent short proof). The exact value of the critical point on the triangular lattice is~$x=\tfrac{1}{\sqrt{3}}$, as was first shown by Onsager~\cite{Ons44} (see also~\cite{BefDum12} in which the triangular lattice is addressed explicitly). Together, these results imply that, for any~$x\in (\tfrac{1}{\sqrt{3}}, 1)$, the model exhibits exponential decay -- that is, there exists~$c'=c'(x) >0$ such that, for any~$k$ and any domain $\calD$ containing $\Lambda_k(\zero)$,
\begin{equation}\label{eq:fk-ising-exp-decay}
	\varphi_{\calD,x}^1(\zero \text{ is connected to distance } k) <e^{-c'k}.
\end{equation}

The RSW theory developed in~\cite{DumHonNol11} for the FK-Ising model implies that, at the critical point~$x=\tfrac{1}{\sqrt{3}}$, the connection probability decays as a power law in the distance and the probability to cross an annulus~$\Lambda_{2k}(\zero) \setminus \Lambda_k(\zero)$ is bounded above and below uniformly in~$k$. We will only require the following upper bounds:

\begin{prop}\label{prop:fk-ising-input}
	Let~$x\in [\tfrac{1}{\sqrt{3}}, 1)$. Then, there exists a constant~$c>0$ such that, for any integer~$k>2$ and domain~$\calD \supset\Lambda_{2k}(\zero)$,
	\begin{equation}\label{eq:ising-crossing}
		\varphi_{\calD,x}^1(\exists \text{ crossing from } \Lambda_{k}(\zero) \text{ to } \bbT\setminus \Lambda_{2k}(\zero)) < 1-c.
	\end{equation}
\end{prop}

We note that, for~$x\in (\tfrac{1}{\sqrt{3}}, 1)$, the proposition follows from~\eqref{eq:fk-ising-exp-decay}, the union bound, and the finite energy property.

It is also known that, for any~$0<x<1$, the FK-Ising measure~$\varphi^1_{\calD,x}$ is positively associated (see, e.g.,~\cite[Theorem 3.8]{Gri06}). Specifically, define a partial order on~$\{0,1\}^{E(\calD^*)}$ by saying that~$\eta\preceq\eta'$ if~$\eta(e)\leq \eta'(e)$ for all~$e\in E(\calD^*)$. We say that an event~$A\subset \{0,1\}^{E(\calD^*)}$ is increasing  (resp. decreasing) if for any~$\eta\in A$, $\eta\preceq \eta'$ (resp. $\eta' \preceq \eta$) implies~$\eta'\in A$.

\begin{lem}[Positive association]\label{lem:fk-ising-fkg}
	For any two increasing events~$A,B\subset \{0,1\}^{E(\calD^*)}$ of positive probability, we have
	\begin{equation}
		\varphi_{\calD,x}^1(A\, | \, B) \geq \varphi_{\calD,x}^1(A).
	\end{equation}
\end{lem}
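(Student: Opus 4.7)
The plan is to derive the lemma as an instance of the classical FKG inequality for random-cluster measures. The right-hand side of~\eqref{eq:FK-Ising-fer} takes the random-cluster form $\varphi(\eta) \propto p^{|\eta|}(1-p)^{|E(\calD^*)|-|\eta|} \cdot 2^{k^1(\eta)}$ with $q = 2$ and $p = 1-x \in (0,1)$, since $p/(1-p) = \tfrac{1}{x}-1$ matches~\eqref{eq:FK-Ising-fer} up to a normalization-free constant. Because $q = 2 \ge 1$, the measure is expected to satisfy positive association, and I would invoke Holley's criterion on single-edge conditionals as the cleanest route.

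Concretely, I would first reduce the claim to the following monotone conditional (Holley) condition: for every edge $e \in E(\calD^*)$, the conditional probability $\varphi_{\calD,x}^1(\eta(e) = 1 \mid \eta \text{ off } e)$ is non-decreasing in the off-$e$ configuration. On the product lattice $\{0,1\}^{E(\calD^*)}$ this single-site monotonicity is well known to be equivalent to the FKG lattice inequality and therefore to imply positive association for all pairs of increasing events. Second, I would compute the conditional odds: writing $\eta^+, \eta^-$ for the two extensions of an off-$e$ configuration with $\eta^+(e) = 1$ and $\eta^-(e) = 0$,
\[
\frac{\varphi_{\calD,x}^1(\eta(e) = 1 \mid \eta \text{ off } e)}{\varphi_{\calD,x}^1(\eta(e) = 0 \mid \eta \text{ off } e)} = \bigl(\tfrac{1}{x} - 1\bigr) \cdot 2^{k^1(\eta^+) - k^1(\eta^-)},
\]
where $k^1(\eta^+) - k^1(\eta^-) \in \{-1, 0\}$ takes the value $0$ precisely when the two endpoints of $e$ already lie in the same wired-boundary cluster of $\eta^-$. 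Third, I would observe that enlarging the off-$e$ configuration can only identify clusters, so the set of off-$e$ configurations for which the endpoints of $e$ are already connected is upward-closed; hence the exponent $k^1(\eta^+) - k^1(\eta^-)$ is non-decreasing in the off-$e$ configuration. Since the base $2 \ge 1$, the ratio above is non-decreasing as well, which is exactly Holley's criterion.

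The only step that is not a direct computation is the abstract reduction from the monotone conditional criterion to positive association for all pairs of increasing events; this is the content of Holley's theorem combined with the FKG theorem on the finite distributive lattice $\{0,1\}^{E(\calD^*)}$. Since the statement is classical, in practice I would simply cite~\cite[Theorem~3.8]{Gri06}, as the text itself already does. The wired boundary plays no special role: it amounts to contracting $\partial \calD^*$ to a single vertex prior to counting clusters, after which the computation of the conditional ratio and the connectivity monotonicity argument proceed verbatim.
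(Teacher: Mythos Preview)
Your proposal is correct and aligns with the paper's treatment: the paper does not prove this lemma at all but simply cites \cite[Theorem~3.8]{Gri06} as a known fact. Your sketch via Holley's criterion and the single-edge conditional computation is precisely the standard argument behind that citation, so there is nothing to compare.
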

Is is straightfoward to confirm that, for any loop configuration $\xi$, the event $\mathrm{Bip}_\xi$ is decreasing in $\eta$. Combining this with \eqref{eq:Ferro-AntiferroRelation}, we conclude that, for any {\em decreasing} event $D$, any $x >1$, and any loop configuration $\xi$,
\begin{equation}\label{eq:CompForDecEvents}
\varphi_{\calD,x}^\xi(D) \geq \varphi_{\calD,1/x}^1(D).
\end{equation}

\subsection{Proof of Theorem \ref{thm:Ising-antifer}, Corollary \ref{cor:IsingSpinCircuits}, and Corollary \ref{cor:IsingGibbs}}
\label{sec:thm-antifer}

\begin{proof}[Proof of Theorem \ref{thm:Ising-antifer}]
Let~$x\in (1,\sqrt{3}]$, fix $k>2$ to be an integer and set $\calD = \Lambda_{2k}(\zero)$. Consider the Edwards--Sokal measure $\mathrm{ES}^{\xi}_{\calD,x}$. For any circuit in $\calD$ and a subgraph $\eta$ of $\calD^*$, we say that the circuit crosses $\eta$ if one of its constituent edges is associated with an edge in $\eta$ (recalling that there is a bijection between the edges of $\calD$ and $\calD^*$). Define~$\mathrm{Circ}_k$ to be the event that there exists a circuit~$C$ of~$\bbH$ entirely contained in the annulus $\Lambda_{2k}(\zero)\setminus \Lambda_k(\zero)$ which does not cross any edge of~$\eta$. By planar duality, the event~$\mathrm{Circ}_k$ is complementary to the existence of a crossing in~$\eta$ from~$\Lambda_{k}(\zero)$ to~$\bbT\setminus \Lambda_{2k}(\zero)$. By observing that $\mathrm{Circ}_k$ is a decreasing event in $\eta$, we can use Proposition~\ref{prop:fk-ising-input} and \eqref{eq:CompForDecEvents} to conclude that
\begin{equation}\label{eq:ising-circuit}
	\varphi_{\calD,x}^{\xi}(\mathrm{Circ}_k) \geq  \varphi_{\calD,1/x}^1(\mathrm{Circ}_k)  = 1 - \varphi_{\calD,1/x}^1(\exists \text{ crossing from } \Lambda_{k}(\zero) \text{ to } \bbT\setminus \Lambda_{2k}(\zero)) > c.
\end{equation}

Sample~$\eta$ from~$\varphi_{\calD,x}^{\xi}$.
Assume~$\eta\in\mathrm{Circ}_k$ and let~$C\subset E(\bbH)$ be the outermost circuit witnessing~$\mathrm{Circ}_k$. Let $\sigma \in \calS(\calD,\tau)$, and set $\omega = \mathrm{DW}(\sigma)$. We define $\sigma \XOR C$ to be the unique spin configuration in $\calS(\calD,\tau)$ such that $\mathrm{DW}(\sigma \XOR C) = \omega \XOR C$; equivalently, $\sigma \XOR C$ is equal to $\sigma$ on the faces outside $C$, and equal to $-\sigma$ on the faces in the interior of $C$. Thanks to \eqref{eq:edwards-sokal-antifer} and the construction of $C$, we have that
\[
\mathrm{ES}_{\calD,x}^\xi(\sigma, \eta) = \mathrm{ES}_{\calD,x}^\xi(\sigma \XOR C, \eta).
\]
Let $S_k$ be the event that $\mathrm{DW}(\sigma)$ contains a loop of diameter at least~$k$ surrounding $\zero$. 
By Lemma~\ref{lem:xor} and since~$\xi$ contains no bi-infinite paths, either~$\omega$ or~$\omega \XOR C$ must be in $S_k$, and the union bound implies that
\begin{align*}
 \mathrm{ES}_{\calD,x}^\xi( \sigma  \in S_k \mid  \mathrm{Circ}_k) \geq \tfrac{1}{2}.
\end{align*}
By definition of the Edwards--Sokal coupling and Lemma~\ref{lem:ising-o1}, the probability that the loop $O(1)$ model contains a loop of diameter greater than $k$ which surrounds the origin is precisely $\mathrm{ES}_{\calD,x}^\xi( \sigma \in S_k)$. From above, we see that 
\[
\mathrm{ES}_{\calD,x}^\xi( \sigma \in S_k) \geq \mathrm{ES}_{\calD,x}^\xi( \sigma  \in S_k \mid  \mathrm{Circ}_k) \cdot 	\varphi_{\calD,x}^{\xi}(\mathrm{Circ}_k) \geq c/2,
\]
completing the proof of the theorem.
\end{proof}

\begin{proof}[Proof of Corollary \ref{cor:IsingSpinCircuits}]
Given a spin configuration $\tau$ on the boundary and $k >0$, let $\tau^+_k$ be the spin configuration that is equal to $\tau$ on $\Lambda_{2k+1}(\zero)$, and is identically equal to $+1$ on $\mathbb{H} \setminus \Lambda_{2k+1}(\zero)$. Since the Ising measure only involves nearest neighbor interactions, 
\[
\mathrm{Ising}_{\Lambda_{2k}(\zero),\beta}^\tau =\mathrm{Ising}_{\Lambda_{2k}(\zero),\beta}^{\tau^+_k}.
\]
The domain wall representation of $\tau^+_k$ contains no bi-infinite paths, and thus, by Theorem~\ref{thm:Ising-antifer}, the probability that this domain wall representation contains a loop $\Gamma$ which surrounds $\zero$ and has diameter greater than $k$ is bounded below uniformly. On this event, the sign of $\sigma$ must be constant along all faces that border $\Gamma$ on the interior, and constant on the faces that border $\Gamma$  on the exterior --- and the two signs must be different. Thus, $\sigma$ will be identically $-1$ on exactly one of the two sets, which both have diameter at least $k$ as subgraphs of $\bbT$. The exterior boundary of $\Gamma$ surrounds $\zero$; the interior boundary of $\Gamma$ either surrounds $\zero$ or contains it. In either case, the event $\mathrm{Minus}_k$ occurs, as required. 
\end{proof}

\begin{proof}[Proof of Corollary \ref{cor:IsingGibbs}]
Fix $x \in (1,\sqrt{3}]$, and let $\bbP_{1,x}$ be a Gibbs measure for the loop~$O(1)$ model with parameter $x$. Since every Gibbs measure can be decomposed into an average over extremal Gibbs measures~(see, e.g., ~\cite{Geo11}) it is sufficient to prove that every extremal Gibbs measure $\tilde{\bbP}_{1,x}$ satisfies
\begin{align*}
	&\tilde{\bbP}_{1,x}(\exists \text{ bi-infinite path} ) = 1, \, \text{ or } \, \\
	&\tilde{\bbP}_{1,x}(\text{every face is surrounded by infinitely many loops}) = 1.
\end{align*}

Since the existence of a bi-infinite path is a tail event, we may assume that there are no bi-infinite paths, $\tilde{\bbP}_{1,x}$-almost surely  (otherwise, the probability is $1$ and we are done). Let $A_k$ be the event that $\zero$ is surrounded by a loop of diameter at least $k$. By Theorem~\ref{thm:Ising-antifer}, $\tilde{\bbP}_{1,x}(A_k) > c$ and this implies that the probability that $A_k$ occurs for infinitely many $k$'s is at least $c$. However, when $\{\limsup_k A_k\}$ occurs, there must be infinitely many loops surrounding {\em every} face of $\bbT$; this is a tail event, and thus its probability under $\tilde{\bbP}_{1,x}$ must be one.
\end{proof}

\section{$n\in [1,2], x=1$}
\label{sec:torus}

In this section, we will prove Theorem \ref{thm:x-1} through an expansion of the loop $O(n)$ model in $n$, which was introduced in~\cite{ChaMac98}. A similar expansion was used in~\cite{GlaMan21} and~\cite{GlaMan21b}. Once this expansion is established, a straightforward monotonicity argument, simple geometric and topological considerations on the torus, and the XOR trick will yield the desired lower bound on the probability of finding a non-contractible loop.

Let $\omega_r$ and $\omega_b$ be two loop configurations on $\tor_k$ (red and blue loops). The pair is {\em coherent} if the loops are disjoint. For any $n \geq 1$, define the measure $\rho_{\tor_k,n}$ by
\begin{equation}\label{eq:DefectDistribution2}
\rho_{\tor_k,n}(\omega_r,\omega_b) = \frac{1}{Z_{\sf loop}(\tor_k)}\cdot(n-1)^{\ell(\omega_b)} \cdot \1_{(\omega_r,\omega_b) \text{ is coherent}},
\end{equation}
where~$Z_{\sf loop}(\tor_k)$ is a normalizing constant. As is clear from the definition above, if one conditions on the $\omega_b$, the distribution of $\omega_r$ is the uniform measure on all configurations which are coherent with $\omega_b$. The next proposition shows that $\omega_r \cup \omega_b$ is distributed as a loop $O(n)$ model with edge weight $x=1$.

\begin{prop}\label{prop:bijection-n-only}
For any $n \geq 1$ the measure~$\mathbb{P}_{\tor_k,n,1}$ is equal to the marginal of~$\rho_{\tor_k,n}$ on~$\omega_r \cup \omega_b$.
\end{prop}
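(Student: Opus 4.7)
The plan is to compute the marginal of $\rho_{\bbT_{k}^2,n}$ on $\omega := \omega_r \cup \omega_b$ directly by summing over the possible ways of decomposing a given loop configuration $\omega$ into a coherent pair. First I would observe that "coherent" must mean vertex-disjoint (equivalently, edge-disjoint on $\bbH$): since each vertex has degree $3$, if $\omega_r$ and $\omega_b$ shared a vertex $v$, the combined degree at $v$ in $\omega_r \cup \omega_b$ would be at least $4$, which cannot occur in a loop configuration. This already gives that the union of a coherent pair is itself a loop configuration.

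The key combinatorial step is to show that, for a fixed loop configuration $\omega$, the coherent decompositions $\omega = \omega_r \sqcup \omega_b$ are in bijection with subsets of the set of loops of $\omega$. For this, I would argue locally: at every vertex $v$ of $\omega$, the two edges of $\omega$ incident to $v$ must either both lie in $\omega_r$ or both lie in $\omega_b$, since otherwise one of $\omega_r,\omega_b$ would have odd degree at $v$. Thus within a single connected loop $L$ of $\omega$, the assignment of edges to $\omega_r$ vs.\ $\omega_b$ is locally constant, and by connectedness $L$ lies entirely in one of the two colors. Conversely, any choice of subset of loops of $\omega$ to put into $\omega_b$ yields a coherent pair.

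With this bijection in hand, the marginal computation is a one-line binomial identity. Writing $k = \ell(\omega)$ and summing over subsets of loops:
\[
  \sum_{(\omega_r,\omega_b)\,:\,\omega_r\cup\omega_b = \omega} (n-1)^{\ell(\omega_b)} \;=\; \sum_{j=0}^{k}\binom{k}{j}(n-1)^{j} \;=\; n^{k} \;=\; n^{\ell(\omega)}.
\]
Therefore the marginal equals $\tfrac{n^{\ell(\omega)}}{Z_{\sf loop}(\bbT_{k}^2)}$, which is precisely the loop $O(n)$ measure with $x=1$ (the normalization constants must agree since both sides are probability measures on the same space of loop configurations).

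There is no real obstacle beyond checking the local parity argument carefully. The only subtlety worth flagging is the convention for "coherent": I would explicitly state at the outset that it means vertex-disjoint, so that the decomposition into whole loops is unambiguous. Once that is settled, the proof is a direct application of the $(1+(n-1))^k = n^k$ identity.
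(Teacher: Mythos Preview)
Your proposal is correct and follows essentially the same approach as the paper: both compute the marginal by summing over all red/blue colorings of the loops of a fixed $\omega$ and invoke the binomial identity $\sum_j \binom{\ell(\omega)}{j}(n-1)^j = n^{\ell(\omega)}$. Your version is slightly more detailed in that you explicitly justify (via the local parity argument) why every coherent decomposition of $\omega$ arises from a loop-by-loop coloring, a point the paper leaves implicit.
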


\begin{proof}
Let $\omega$ be a loop configuration on $\tor_k$, and let $\calC(\omega)$ be the $2^{\ell(\omega)}$ distinct red/blue colorings of the loops of $\omega$. Each pair $(\omega_r,\omega_b) \in \calC(\omega)$ is coherent, and therefore
\[
\rho_{\tor_k,n}(\omega_r \cup \omega_b = \omega) = \frac{1}{Z_{\sf loop}(\tor_k)}\sum_{(\omega_r,\omega_b) \in \calC(\omega)} (n-1)^{\ell(\omega_b)}  = \frac{1}{Z_{\sf loop}(\tor_k)} \cdot n^{\ell(\omega)},
\]
where the final equality follows from a binomial-type expansion.
\end{proof}

The following lemma states the simple but essential observation that, whenever $n \in [1,2]$, $\omega_r$ stochastically dominates $\omega_b$. In this context, an event $A$ is increasing if, whenever $\omega \preceq \omega'$ (as elements of $\{0,1\}^{E(\bbH)}$) and $\omega \in A$, then $\omega' \in A$. 

\begin{lem}\label{lem:monotonicity}
For any increasing event $A$ and $n \in [1,2]$,
\[
\rho_{\tor_{k},n}[\omega_r \in A] \geq \rho_{\tor_{k},n}[\omega_b \in A] .
\]
\end{lem}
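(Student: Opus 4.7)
The plan is to condition on the merged configuration $\omega := \omega_r \cup \omega_b$ and to identify the conditional law as an i.i.d.\ Bernoulli two-coloring of the loops of $\omega$. First, I would note that, exactly as in the proof of Proposition~\ref{prop:bijection-n-only}, every loop configuration $\omega$ on $\tor_{k,\ell}$ arises from precisely $2^{\ell(\omega)}$ coherent pairs, indexed by the assignment of each loop of $\omega$ to the color red or blue. Because the weight $(n-1)^{\ell(\omega_b)}$ in \eqref{eq:DefectDistribution2} factorizes across loops of $\omega$, the conditional distribution under $\rho_{\tor_{k,\ell},n}$ given $\omega$ colors each loop of $\omega$ independently, red with probability $1/n$ and blue with probability $(n-1)/n$.

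Next, I would observe that for $n\in[1,2]$ one has $1/n \geq (n-1)/n$, so the red Bernoulli parameter dominates the blue one. Since the loops in a loop configuration on the trivalent graph $\tor_{k,\ell}$ are vertex-disjoint (and hence edge-disjoint), the map sending a subset $S$ of the loops of $\omega$ to the union of their edge-sets is inclusion-preserving from $\{0,1\}^{\mathrm{loops}(\omega)}$ into $\{0,1\}^{E(\tor_{k,\ell})}$. Consequently, if $A \subseteq \{0,1\}^{E(\tor_{k,\ell})}$ is increasing with respect to edge inclusion, then both $\{\omega_r \in A\}$ and $\{\omega_b \in A\}$ pull back to increasing events on the finite product space $\{0,1\}^{\mathrm{loops}(\omega)}$. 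The standard monotone coupling of two Bernoulli product measures (with $1/n \geq (n-1)/n$) then yields
\[
\rho_{\tor_{k,\ell},n}\bigl[\omega_r\in A \,\big|\, \omega\bigr] \;\geq\; \rho_{\tor_{k,\ell},n}\bigl[\omega_b\in A \,\big|\, \omega\bigr],
\]
and taking the expectation over $\omega$ completes the argument.

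I do not foresee any substantive obstacle. Both hypotheses in the lemma play a transparent role: $n\geq 1$ guarantees that the weight $(n-1)^{\ell(\omega_b)}$ is nonnegative so that $\rho_{\tor_{k,\ell},n}$ is indeed a probability measure, while $n\leq 2$ is exactly what makes the red parameter dominate the blue parameter so that the monotone coupling points in the correct direction. The only point requiring mild care is the claim that the coloring-to-edges map is order-preserving, which is immediate because loops of a loop configuration on $\bbH$ (and hence on $\tor_{k,\ell}$) share no edges, so adding a loop to the colored subset only enlarges the resulting edge set.
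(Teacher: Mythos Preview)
Your proposal is correct and follows essentially the same approach as the paper: condition on $\omega=\omega_r\cup\omega_b$, observe that the conditional law colors loops independently with red probability $1/n\ge (n-1)/n$ when $n\in[1,2]$, apply the monotone coupling for increasing events, and integrate. You even make explicit the order-preserving nature of the loops-to-edges map, a point the paper leaves implicit.
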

\begin{proof}
If $\omega$ is distributed as $\bbP_{\tor_{k},n,1}$, then the construction above implies that we can sample $(\omega_r,\omega_b)$ conditional on $\omega_r \cup \omega_b = \omega$ by coloring each loop of $\omega$ blue with probability $(n-1)/n$, independently, and coloring the remaining loops red. Thus, conditional on $\omega$, both $\omega_r$ and $\omega_b$ are (complementary) Bernoulli percolations on the loops of $\omega$. Whenever $n \in [1,2]$, $(n-1)/n \leq 1/2$, and thus the red percolation process stochastically dominates the blue percolation process. We conclude that, for any increasing event $A$,
\begin{align*}
\rho_{\tor_{k},n}[\omega_r \in A] & = \rho_{\tor_{k},n}\left[ \rho_{\tor_{k},n}[\omega_r \in A  \, \mid \, \omega_r \cup \omega_b = \omega] \right]
\\&\geq \rho_{\tor_{k},n}\left[ \rho_{\tor_{k},n}[\omega_b \in A  \, \mid \, \omega_r \cup \omega_b = \omega] \right] \\ & = \rho_{\tor_{k},n}[\omega_b \in A].
\end{align*}
\end{proof}

We now state two topological facts about loops $\tor_k$:
\begin{enumerate}[label=\roman*.]
\item If $\Gamma$ is a non-contractible loop in $\tor_k$, then $\tor_k \setminus \Gamma$ is a connected graph, and \label{eq:facti}
\item If $\omega$ is a loop configuration consisting only of contractible loops, then there exists a spin configuration $\sigma_\omega$, assigning $\pm 1$ to the face of $\tor_k$, such that two adjacent faces have different spins if and only if their common edge belongs to $\omega$. \label{eq:factii}
\end{enumerate}
To see both of these facts, we embed $\tor_k$ in the continuous torus, and consider a lift map from the continuous torus to $\mathbb{R}^2$, its universal cover. The image of a loop in $\tor_k$ under a lift is a loop (i.e. a homeomorphic image of the circle) if and only if the loop has trivial homotopy as a subset of the continuous torus. Otherwise, the image will be a homeomorphic image of a segment (as the length of all loops in $\tor_k$ is finite); since a segment does not disconnect $\mathbb{R}^2$ and the lift is bi-continuous, the first fact above follows. For the second fact, consider a single contractible loop $\Gamma$ in $\tor_k$. Its image under a lift will separate the plane into two connected sets. The preimage of the interior under the lift will be the union of a set of faces of $\tor_k$ (for an explicit proof of this, see~\cite[Theorem 3.2]{feldheim2018rigidity}). We denote this set of faces by $S_\Gamma$. We can then produce a spin configuration $\sigma_\Gamma$ for the loop by setting $\sigma = +1$ on $S_\Gamma$ and $-1$ otherwise. For a general loop configuration, we can produce a spin configuration by multiplying the spin configurations of the individual loops that comprise $\omega$.
Such spin representation for the loop~$O(n)$ model on a planar domain was introduced in~\cite{DumGlaPel21}; for~$n=1$, this is the classical low-temperature expansion of the Ising model~\cite[Sec. 3.7.2]{FriVel17}).

In the next lemma, we show that, for any configuration $\omega$ of contractible loops, applying the XOR operation with a non-contractible loop $\Gamma$ yields a non-contractible loop.

\begin{lem}\label{lem:XORtoplogy}
	Let $\Gamma$ be a non-contractible loop.  Then, for any loop configuration $\omega$ in $\tor_{k}$, at least one of $\{\omega,\omega \XOR \Gamma\}$ includes a non-contractible loop.
\end{lem}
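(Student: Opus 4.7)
The plan is to prove the contrapositive: assume that both $\omega$ and $\omega \XOR \Gamma$ contain only contractible loops, and derive that $\Gamma$ must be contractible as well.

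By the assertion stated just before the lemma (proved by induction), any loop configuration on $\tor_{k,\ell}$ consisting only of contractible loops admits a spin representation on the faces of the torus. Hence, under our assumption there exist spin configurations $\sigma, \sigma' \colon \mathrm{Faces}(\tor_{k,\ell}) \to \{-1,+1\}$ whose domain-wall edge sets are exactly $\omega$ and $\omega \XOR \Gamma$, respectively. I would then take the pointwise product $\tau := \sigma \cdot \sigma'$. A direct check shows that two adjacent faces receive different $\tau$-values if and only if exactly one of $\sigma, \sigma'$ flips across their common edge; that is, the domain walls of $\tau$ form the symmetric difference of those of $\sigma$ and $\sigma'$, which equals
\[
\omega \XOR (\omega \XOR \Gamma) = \Gamma.
\]
So $\tau$ is a spin representation of the single loop $\Gamma$.

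The final step, which I expect to be the only subtle one, is to show that no spin configuration on $\tor_{k,\ell}$ can have its domain walls equal to a non-contractible loop. Here I would use the fact that $\Gamma$, being non-contractible, represents a non-trivial class in $H_1(\tor_{k,\ell};\mathbb{Z}/2)$; by Poincaré duality on the torus, there exists a closed dual cycle $C$ (a cycle through the faces of $\tor_{k,\ell}$, i.e. a circuit on $\bbT$ considered modulo the torus identifications) whose mod-$2$ intersection number with $\Gamma$ is $1$. Walking along $C$ and tracking $\tau$, the value would flip every time $C$ crosses an edge of $\Gamma$; after a full traversal of the closed cycle $C$, the number of flips is odd, contradicting the fact that one returns to the starting face with the same $\tau$-value. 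Therefore, such a $\tau$ cannot exist, so our assumption fails and at least one of $\omega, \omega \XOR \Gamma$ contains a non-contractible loop.

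The main obstacle is making the topological intersection argument rigorous in the combinatorial setting of the hexagonal torus. Concretely, I would pick $C$ to be either of the two straight coordinate loops of the dual triangular torus (these are non-contractible generators of $H_1$) and then use the fact that $\Gamma$, as a $\mathbb{Z}/2$-cycle in the edge-chain group, has a non-trivial homology class, so the cup/intersection pairing with at least one of the two generators of $H_1$ is $1\bmod 2$. Everything else — the existence of spin representations and the behaviour of domain walls under the spin product — is elementary and uses only the preceding paragraph of the paper.
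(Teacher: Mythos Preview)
Your proof is correct and follows essentially the same route as the paper: assume both $\omega$ and $\omega\XOR\Gamma$ have only contractible loops, take their spin representations, multiply them pointwise to obtain a spin representation of $\Gamma$, and derive a contradiction. The only cosmetic difference is in the last step, where the paper notes that removing a non-contractible loop leaves the torus connected (via Euler's formula), so no two-valued spin function can have domain walls exactly $\Gamma$; your mod-$2$ intersection-number argument expresses the same topological obstruction.
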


\begin{proof}
	Assume~$\omega$ and~$\omega \XOR \Gamma$ contain only contractible loops. Let~$\sigma_{\omega}$ and~$\sigma_{\omega\XOR\Gamma}$ be their respective spin representations. Then, the product~$\sigma_{\omega}\sigma_{\omega\XOR\Gamma}$ is a spin representation for the loop configuration made up of only $\Gamma$. Such a spin configuration would have to be constant on the connected components of $\tor_k \setminus \Gamma$; however, $\tor_k \setminus \Gamma$ is connected, leading to a contradiction.
\end{proof}

Let $\Gamma$ be a simple circuit in $\tor_{k}$. Given $\omega_b$, we call $\Gamma$ {\em blue-free} if $\Gamma \cap \omega_b = \emptyset$. Similarly, $\Gamma$ is {\em red-free} if $\Gamma \cap \omega_r = \emptyset$. The duality lemma below shows that one can always find a non-contractible circuit that is blue- or red-free (see~\cite[Lemma 3.6]{GlaMan21} for a planar analogue).

\begin{lem}\label{lem:DualityTorus}
	Any consistent pair of loop configurations $(\omega_r, \omega_b)$ on a torus contains either a blue-free or a red-free non-contractible circuit.
\end{lem}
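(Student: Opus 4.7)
The plan is to split into two cases based on whether the sub-configurations contain any non-contractible loops. In the easy case, if $\omega_r$ (respectively $\omega_b$) contains a non-contractible loop $\Gamma$, then $\Gamma$ is itself a non-contractible simple circuit on $\bbH$; by coherence, $\Gamma$ is edge-disjoint from $\omega_b$ (respectively $\omega_r$), so it serves as the desired blue-free (respectively red-free) non-contractible circuit.

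The main case is when every loop of $\omega_r$ and every loop of $\omega_b$ is contractible. First, I would invoke the spin representation fact noted just before Lemma~\ref{lem:XORtoplogy} to obtain $\sigma_r,\sigma_b\colon F(\tor_{k,\ell})\to\{\pm 1\}$ with $\mathrm{DW}(\sigma_r)=\omega_r$ and $\mathrm{DW}(\sigma_b)=\omega_b$. The coherence hypothesis then forces a rigidity on the $4$-coloring of faces by $(\sigma_r,\sigma_b)$: no edge of $\bbH$ can separate faces of types $(+,+)$ and $(-,-)$, nor of types $(+,-)$ and $(-,+)$, since any such edge would have to lie in both $\omega_r$ and $\omega_b$ simultaneously. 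Next I would introduce the product spin $\tau=\sigma_r\sigma_b$, whose domain walls are precisely $\omega_r\sqcup\omega_b$, and apply Euler's formula to the decomposition of $\tor_{k,\ell}$ by $\omega_r\cup\omega_b$: since all of these loops are contractible and $\chi(\tor_{k,\ell})=0$, at least one connected face $C$ of this subdivision must be of ``torus-with-disks-removed'' type, hence contain a simple closed curve non-contractible in the torus. The $4$-coloring rigidity then guarantees that $C$ consists entirely of hexagons of a single quadrant of $\{\pm 1\}^2$, so every edge of $\bbH$ with both adjacent faces in $C$ avoids both $\omega_r$ and $\omega_b$.

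The main obstacle is the final step of turning the topological non-contractibility of $C$ into an honest non-contractible cycle of $\bbH$ witnessing blue- or red-freeness. The naive strategy of building this cycle purely from edges internal to $C$ can fail when $C$ is only one hexagon wide in places, since the internal edges then form only a matching rather than a cycle. To circumvent this, I would pass to the induced subgraphs $\bbH[V(\bbH)\setminus V(\omega_b)]$ and $\bbH[V(\bbH)\setminus V(\omega_r)]$, using the key observation that a blue-free cycle in $\bbH$ cannot pass through any vertex of $V(\omega_b)$: a blue vertex has two blue edges and only one non-blue edge, so any cycle through it must use at least one blue edge. I would then argue, via the topological analysis above combined with the fact that $H_1(\bbH;\bbZ/2)$ surjects onto $H_1(\tor_{k,\ell};\bbZ/2)$, that at least one of these two induced subgraphs still surjects onto $H_1(\tor_{k,\ell};\bbZ/2)$, yielding the sought non-contractible circuit avoiding one of the two colors. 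The technical crux is this last surjectivity, which requires a careful accounting (Mayer--Vietoris-style) of how the potentially disconnected ``free'' subgraph $\bbH\setminus(\omega_r\cup\omega_b)$ stitches the two vertex-deletion subgraphs back together inside $\bbH$.
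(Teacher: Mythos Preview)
Your easy case is correct and matches the paper. The main case diverges from the paper's argument, and the step you yourself flag as the ``technical crux'' is a genuine gap rather than a routine detail.

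The paper's proof is asymmetric and sidesteps the topological difficulty you hit. It works only with $\omega_b$: set $\eta_b := E(\tor_{k,\ell}) \setminus \omega_b$ and let $\eta_b^*$ be its bond-dual on $\bbT$ (the $\bbT$-edges joining faces with different $\sigma_b$). Standard duality on the torus via Euler's formula gives that at least one of $\eta_b$, $\eta_b^*$ contains a non-contractible circuit. If $\eta_b$ does, that circuit is blue-free and we are done. If $\eta_b^*$ does, the $\sigma_b$-spins alternate along it, and the paper invokes the local-modification argument of \cite[Lemma~3.6]{GlaMan18} \emph{mutatis mutandis} to convert this $\bbT$-circuit into a red-free $\bbH$-circuit; coherence with $\omega_r$ enters only at that last step. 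No spin representation of $\omega_r$, no four-colouring, no homological algebra.

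In your approach, the identification of the torus-with-disks-removed face $C$, its monochromaticity, and the observation that a blue-free simple cycle cannot touch $V(\omega_b)$ are all correct. But showing that one of $\bbH[V\setminus V(\omega_b)]$, $\bbH[V\setminus V(\omega_r)]$ carries a non-contractible cycle is essentially the whole lemma restated, and your Mayer--Vietoris sketch does not close it: with $A=\bbH[V\setminus V(\omega_b)]$ and $B=\bbH[V\setminus V(\omega_r)]$ one has $A\cup B=\bbH$, but the connecting map $H_1(\bbH)\to \tilde H_0(A\cap B)$ need not vanish on the torus classes, and controlling it requires exactly the combinatorial input (how the ``free'' subgraph $A\cap B$ threads through $C$) that the one-hexagon-wide obstruction already shows is nontrivial. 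The statement you want is true --- it follows from the paper's proof --- but your outline stops short of supplying it.
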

\begin{proof}
	If the loop configuration contains a non-contractible loop, the statement is trivial. Assume all loops are contractible. Let~$\sigma_b$ be a spin representation for~$\omega_b$. Denote by~$\eta_b$ the edges of $\tor_k \setminus \omega_b$. Let~$\eta_b^*$ be the dual configuration, i.e. the set of pairs of adjacent faces of~$\tor_{k}$ having different spins in~$\sigma_b$. By the standard duality, either~$\eta_b$ or~$\eta_b^*$ contains a non-contractible circuit. If this is~$\eta_b$, then we are done, since such circuit is blue-free. Assume now that~$\eta_b^*$ contains a non-contractible circuit~$\tilde{\Gamma}$. By the definition of~$\eta_b^*$, spins on along~$\tilde{\Gamma}$ alternate. Applying the proof of~\cite[Lemma 3.6]{GlaMan21} to our setup {\em mutatis mutandis} we get that, by local modifications, $\tilde{\Gamma}$ can be made into a red-free non-contractible circuit.
\end{proof}

\begin{proof}[Proof of Theorem \ref{thm:x-1}]
Sample $(\omega_r,\omega_b)$ according to $\rho_{\tor_k,n}$. By Lemma \ref{lem:XORtoplogy}, we may condition on $\omega_b$ and find that
\[
\rho_{\tor_{k},n}[\exists \text{ non-contractible loop in } \omega_r \cup \omega_b \, \mid \, \omega_b] \geq \tfrac12 \cdot \1_{\exists \text{ non-contractible blue-free circuit}}.
\]
By Proposition~\ref{prop:bijection-n-only}, the law of $\omega_r \cup \omega_b$ is $\mathbb{P}_{\tor_k,n,1}$; therefore, we may take expectation over $\omega_b$ and find that
\begin{equation}\label{eq:torus-blue-free-circuit}
	\bbP_{\tor_{k},n,1}[\exists \text{ non-contractible loop in } \omega] \geq \tfrac12  \cdot \rho_{\tor_{k},1} [\exists \text{ non-contractible blue-free circuit}].
\end{equation}
The existence of a blue-free circuit is a decreasing event in $\omega_b$; by Lemma~\ref{lem:monotonicity}, the probability of the RHS of~\eqref{eq:torus-blue-free-circuit} is greater or equal than the same quantity for red-free circuits. By Lemma~\ref{lem:DualityTorus}, the sum of the two probabilities is greater or equal to~$1$. Together, this implies that the RHS of~\eqref{eq:torus-blue-free-circuit} is greater or equal than~$1/4$.
\end{proof}

\section{$n \in [1, 1+\delta], x\in [1-\delta, 1]$}

In this section, we will prove Theorem \ref{thm:long-around-1-inf-vol} and Corollaries \ref{cor:RSW} and \ref{cor:long-around-1}. Here, we will expand the loop $O(n)$ model in both $n$ and $x$ simultaneously, which leads us to `defects' that can be either loops or edges. Instead of working on subsets of $\bbH$ directly, we will generate auxiliary finite planar graphs from an infinite-volume loop configuration, which will converge to an infinite planar graph, in the sense of Benjamini--Schramm convergence. Finally, we will show that the absence of defect-free circuits in the original domains can be coupled to the existence of infinite components in an appropriately chosen Bernoulli percolation process on the Benjamini--Schramm limit; the recent work~\cite{Pel19} will be used to rule out this possibility at a small enough percolation parameter.

\subsection{Defect-loop and -edge representation}
\label{sec:spin-rep}

We begin by extending the notion of coherent loop configurations introduced in Section~\ref{sec:torus}.
Given a domain $\calD$ and a loop configuration $\xi$, we say that loop configurations~$\omega_r,\omega_b$ and an edge configuration $\eta \in \{0,1\}^{E(\calD)}$ are {\em coherent} if they are pairwise disjoint and $\omega_r \cup \omega_b \in \calE(\calD,\xi)$.
We refer to~$\omega_b$ and~$\eta$ as the defect loops and edges, respectively.
In a slight abuse of notation, we say that~$(\omega_r,\omega_b)$ is coherent if~$(\omega_r,\omega_b, \emptyset)$ is coherent.

For any $0 < x \leq 1$ and $n \geq 1$, define the measure $\mu_{\calD,n,x}^\xi$ by
\begin{equation}\label{eq:DefectDistribution}
\mu_{\calD,n,x}^\xi(\omega_r,\omega_b,\eta) = \frac{1}{Z_{\sf def}}\cdot(n-1)^{\ell(\omega_b)}\left(\tfrac{1}{x} -1\right)^{|\eta|} \cdot \1_{(\omega_r,\omega_b,\eta) \text{ is coherent}},
\end{equation}
where~$Z_{\sf def}=Z_{\sf def}^\xi(\calD)$ is a normalizing constant, and $|\eta|$ is the number of edges in $\eta$.

\begin{prop}\label{prop:bijection}
For any $n \geq 1$ and $x \in (0,1]$, the measure~$\mathbb{P}_{\calD,n,x}^\xi$ is equal to the marginal of~$\mu_{\calD,n,x}^\xi$ on~$\omega_r \cup \omega_b$.
\end{prop}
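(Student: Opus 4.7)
\textbf{Proof plan for Proposition \ref{prop:bijection}.}

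The statement is a direct computation: I would fix an arbitrary loop configuration $\omega \in \calE(\calD,\xi)$, compute the $\mu_{\calD,n,x}^\xi$-mass of the fiber $\{(\omega_r,\omega_b,\eta) : \omega_r \cup \omega_b = \omega\}$, and check that the result is proportional (with a constant independent of $\omega$) to $x^{|\omega|} n^{\ell(\omega)}$. Since both $\bbP_{\calD,n,x}^\xi$ and the marginal of $\mu_{\calD,n,x}^\xi$ are probability measures on $\calE(\calD,\xi)$, proportionality forces equality.

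The summation factorizes. First, because $\omega_r$ and $\omega_b$ are disjoint and their union is $\omega$, specifying $(\omega_r,\omega_b)$ with $\omega_r\cup\omega_b=\omega$ is the same as choosing, for each loop of $\omega$ intersecting $\calD$, a color in $\{r,b\}$ (loops lying entirely outside $\calD$ are determined by $\xi$ and do not contribute to $\ell(\omega_b)$). Summing the weight $(n-1)^{\ell(\omega_b)}$ over such colorings is a binomial identity:
\[
\sum_{B \subseteq \text{loops of } \omega \text{ meeting } \calD} (n-1)^{|B|} = n^{\ell(\omega)}.
\]
Second, the coherence constraint $\eta \cap (\omega_r\cup\omega_b)=\emptyset$ reduces to $\eta \subseteq E(\calD)\setminus \omega$, and the sum factorizes edge-by-edge to give
\[
\sum_{\eta \subseteq E(\calD)\setminus \omega} \bigl(\tfrac{1}{x}-1\bigr)^{|\eta|} = \Bigl(\tfrac{1}{x}\Bigr)^{|E(\calD)|-|\omega|}.
\]

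Combining these two sums gives
\[
\sum_{\substack{(\omega_r,\omega_b,\eta) \text{ coherent}\\ \omega_r\cup\omega_b=\omega}} (n-1)^{\ell(\omega_b)} \bigl(\tfrac{1}{x}-1\bigr)^{|\eta|} = n^{\ell(\omega)} \cdot \Bigl(\tfrac{1}{x}\Bigr)^{|E(\calD)|-|\omega|} = x^{-|E(\calD)|}\cdot x^{|\omega|}\, n^{\ell(\omega)}.
\]
Since $x^{-|E(\calD)|}$ does not depend on $\omega$, the marginal of $\mu_{\calD,n,x}^\xi$ on $\omega_r \cup \omega_b$ is proportional to $x^{|\omega|} n^{\ell(\omega)}$, and the conclusion follows.

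There is no genuine obstacle; the only technical care is bookkeeping the loops of $\omega$ that extend outside $\calD$ (so that the coloring sum produces exactly $n^{\ell(\omega)}$ with $\ell(\omega)$ as defined in the introduction) and checking that $\tfrac{1}{x}-1\ge 0$ so the measure in \eqref{eq:DefectDistribution} is nonnegative, which is exactly the hypothesis $x \in (0,1]$ (combined with $n\ge 1$ for the loop weight).
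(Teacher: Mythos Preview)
Your proposal is correct and follows essentially the same approach as the paper: sum out $\eta$ over $E(\calD)\setminus\omega$ to produce $x^{-|E(\calD)|+|\omega|}$, and sum over red/blue colorings of the loops of $\omega$ via the binomial identity to produce $n^{\ell(\omega)}$. The paper performs these two summations in the opposite order (first integrating out $\eta$ for fixed $(\omega_r,\omega_b)$, then summing over colorings), but the computation is identical.
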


\begin{proof}
Fix a pair of disjoint loop configurations $(\omega_r, \omega_b)$. For brevity, we write~$E$ for~$E(\calD)$ below. 
By using a binomial-style expansion, it is easy to see that
\[
\sum_{\eta \subseteq E\setminus(\omega_r\cup\omega_b)}
\left(\tfrac{1}{x} - 1 \right)^{|\eta|} = \prod_{e \in E \setminus (\omega_r \cup \omega_b)}\left[1 + \left(\tfrac{1}{x}-1 \right)\right] = x^{-|E \setminus (\omega_r \cup \omega_b)|}.
\]
Therefore, the marginal of $\mu_{\calD,n,x}^{\xi}$ on~$(\omega_r,\omega_b)$ is given by
\[
\mu_{\calD,n,x}^\xi(\omega_r,\omega_b) = \frac{x^{-|E|}}{Z_{\sf def}} \cdot (n-1)^{\ell(\omega_b)} x^{|\omega_r| + |\omega_b|}\1_{(\omega_r,\omega_b) \text{ is coherent}}.
\]
To determine the marginal on~$\omega_r \cup \omega_b$, we fix a loop configuration~$\omega \in \calE(\calD,\xi)$ and consider all possible colorings of its loops in red and blue. Summing over all such possibilities and using~$n-1+1=n$, we get
\[
\mu_{\calD,n,x}^\xi(\omega=\omega_r\cup \omega_b) = \frac{x^{-|E|}}{Z_{\sf def}} \cdot x^{|\omega|}\sum_{\omega_b\cup \omega_r = \omega} (n-1)^{\ell(\omega_b)} = \frac{x^{-|E|}}{Z_{\sf def}} \cdot x^{|\omega|} n^{\ell(\omega)} = \mathbb{P}_{\calD,n,x}^\xi(\omega).
\]
\end{proof}

If $(\omega_r,\omega_b,\eta) \sim \mu_{\calD,n,x}^\xi$, \eqref{eq:DefectDistribution} implies that the distribution of $\omega_r$ conditioned on $(\omega_b,\eta)$ is uniform over all loop configurations such that~$(\omega_r,\omega_b,\eta)$ is coherent.
Suppose $\Gamma$ is a simple circuit in $\calD$. 
We call $\Gamma$ {\em defect-free} for~$(\omega_r,\omega_b,\eta)$ if $\Gamma \cap (\omega_b \cup\eta) = \emptyset$. 
Thus, given~$(\omega_b,\eta)$, the XOR operation with a defect-free~$\Gamma$ is measure preserving on~$\omega_r$:
\[
\mu_{\calD,n,x}^\xi(\omega_r,\omega_b,\eta) = \mu_{\calD,n,x}^\xi(\omega_r \XOR \Gamma,\omega_b,\eta).
\]
Therefore, the XOR trick can be used to control the probability of large loops in terms of the probability that there exists large defect-free circuits.

The next proposition concerns the existence of defect-free circuits in a translation-invariant Gibbs measure~$\bbP_{n,x}$ of the loop~$O(n)$ model when~$n$ and~$x$ are close enough to~$1$.
We first need to introduce the corresponding defect representation.
Given a loop configuration~$\omega$, define~$\mu_{n,x}^{\omega}$ as the measure on triples~$(\omega_r, \omega_b, \eta)$ obtained as follows:
\begin{itemize}
	\item color each loop of $\omega$ blue with probability $\tfrac{n-1}{n}$, independently at random; color the remaining loops red;
	\item let $\eta$ be an independent $(1-x)$-Bernoulli percolation on the edges in $\bbH \setminus \omega$.
\end{itemize}
Define~$\mu_{n,x}$ as the expectation of~$\mu_{n,x}^{\omega}$ with respect to~$\bbP_{n,x}$. 
Note that the distribution of $\omega_r$ conditioned on $(\omega_b,\eta)$ is uniform over all loop configurations such that $(\omega_r,\omega_b,\eta)$ is coherent.
Thus, under~$\mu_{n,x}$ conditioned on~$(\omega_b,\eta)$, the loops in~$\omega_r$ are distributed as domain walls for percolation on the faces of~$\bbH\setminus (\omega_b\cup \eta)$.

\begin{prop}[Existence of defect-free circuits]
\label{prop:PlanarGraphProp}
There exists $\delta >0$ such that the following holds.
Assume that
\[
1 \leq n \leq 1 + \delta, \quad 1- \delta \leq x \leq 1.
\]
Let $\bbP_{n,x}$ be a translation-invariant Gibbs measure for the loop~$O(n)$ model with these parameters, and $\mu_{n,x}$ be its associated defect representation. 
Assume there are no bi-infinite paths $\bbP_{n,x}$-almost surely.
Then, for any positive integer $r$,
\[
	\mu_{n,x}(\exists \text{ a defect-free circuit surrounding } \Lambda_r(\zero)) =1.
\]
\end{prop}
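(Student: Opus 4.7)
The plan is to reduce the existence of a defect-free circuit surrounding $\Lambda_r(\zero)$ to the absence of an infinite cluster in a Bernoulli percolation on a random planar graph built from $\omega$, and then to invoke the percolation-threshold bound of~\cite{Pel19} for Benjamini--Schramm limits of planar graphs.

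First I would reformulate the target event using planar duality: a simple cycle in $\bbH$ encircling $\Lambda_r(\zero)$ and missing $D=\omega_b\cup \eta$ exists iff the dual set $D^\ast\subset E(\bbT)$ contains no path from the dual vertices of $\Lambda_r(\zero)$ to infinity, so the proposition reduces to showing that $\mu_{n,x}$-almost surely the $D^\ast$-cluster of the faces of $\Lambda_r(\zero)$ is finite. I would then encode this $D^\ast$-process as a genuine Bernoulli bond percolation on an auxiliary random planar graph $\calG(\omega)$, built from $\bbT$ by grouping, for each loop $\ell$ of $\omega$, all $|\ell|$ dual edges of $\ell$ into a single ``super-edge'' (endowed with the correct endpoint structure so that the connectivity produced when $\ell$ is blue is faithfully reproduced), while each remaining edge of $\bbT$ is kept as a separate ``bond-edge''. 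By the conditional description of $\mu_{n,x}$ from Section~\ref{sec:spin-rep}, each super-edge is open independently with probability $(n-1)/n$ and each bond-edge independently with probability $1-x$. Translation invariance of $\bbP_{n,x}$, together with the assumed absence of bi-infinite paths, ensures that the rooted planar graph $(\calG(\omega),\zero)$ is unimodular and arises as a Benjamini--Schramm limit of the analogous random graphs attached to the loop~$O(n)$ model on a F{\o}lner sequence of finite domains (convergence of local statistics coming from the Gibbs property).

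Finally I would invoke the main result of~\cite{Pel19}, providing a universal $p_\ast>0$ such that Bernoulli bond percolation with parameter below $p_\ast$ on any Benjamini--Schramm limit of finite planar graphs with bounded density has only finite clusters almost surely. For $n\in [1,1+\delta]$ and $x\in [1-\delta,1]$, both $(n-1)/n$ and $1-x$ are at most $\delta$, so taking $\delta<p_\ast$ places the percolation in the subcritical regime; all $D^\ast$-clusters are then finite a.s., and the required defect-free circuit follows by the duality of the first step. If $\bbP_{n,x}$ is not ergodic I would first apply an ergodic decomposition; each component remains translation invariant and charges no bi-infinite path, so the above argument applies componentwise.

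The main obstacle is the construction of $\calG(\omega)$: one needs it to be simultaneously planar, faithfully coupled to the $D^\ast$-process, and of a shape to which~\cite{Pel19} applies. Grouping the $|\ell|$ dual edges of a loop $\ell$ into a single super-edge removes the ``all-together'' correlation coming from the blue/red coloring, but a blue loop must be made to merge many dual vertices on both its sides, not just one pair, and encoding this multi-way identification in a planar graph of bounded density requires a careful gadget construction. Verifying that the resulting rooted random graph satisfies the hypotheses of~\cite{Pel19} is where translation invariance and the no-bi-infinite-path hypothesis are decisively used, most likely through the Gibbs and Burton--Keane arguments alluded to in the preceding subsection in order to control the typical loop-length distribution around the root.
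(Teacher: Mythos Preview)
Your overall strategy coincides with the paper's: reduce the existence of a defect-free circuit to non-percolation on an auxiliary random planar graph built from $\omega$, pass to a Benjamini--Schramm limit, and apply the threshold bound of~\cite{Pel19}. The use of translation invariance (to recenter at a uniform face) and of the no-bi-infinite-path hypothesis (to control the typical loop length, hence the degree distribution needed for BS compactness) is also correctly identified.

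The gap is precisely where you place it, and it is real. A single blue loop $\ell$ simultaneously opens $|\ell|$ dual edges with $2|\ell|$ distinct endpoints; there is no planar bond gadget that replaces this by a single Bernoulli edge while preserving the connectivity pattern and keeping the graph amenable to~\cite{Pel19}. Any attempt to wire the endpoints through an auxiliary vertex either introduces deterministic connections you do not want, or yields a site rather than a bond process. Relatedly, Theorem~\ref{thm:Ron} as stated (and as proved in~\cite{Pel19}) is a \emph{site} percolation statement on simple planar graphs; a bond analogue with merely ``bounded density'' is not available to invoke.

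The paper resolves this differently. It uses site percolation on a graph $G_\calD(\omega)$ whose vertices are the loops of $\omega$, the non-loop edges, and the faces; each loop becomes a single high-degree vertex, which handles the all-or-nothing correlation automatically. The price is that the resulting process $\zeta$ is not Bernoulli but $2$-dependent: face-vertices are declared open whenever a bordering edge is a defect or a designated neighboring loop is blue. The key device you are missing is the $I(\ell)$ association (Lemma~\ref{lem:association}): using that any simple planar graph has a vertex of degree at most~$5$, one assigns to each loop at most five faces so that any two nearby open loops always force an open face between them. This is exactly what allows the duality step in Proposition~\ref{prop:PercolationBound} (closed circuits can be rerouted off face-vertices) and what makes the $2$-dependent process stochastically dominated by genuine Bernoulli site percolation with parameter $\mathbf{p}(n,x)\to 0$ as $(n,x)\to(1,1)$ (Proposition~\ref{prop:Domination}). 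Once this domination is in place, the BS-limit and~\cite{Pel19} arguments proceed as you outline.
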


The proof of Proposition~\ref{prop:PlanarGraphProp} relies on a non-trivial lower bound for the site percolation threshold of planar graphs~\cite{Pel19} and is given in Sections~\ref{sec:graph-construction}-\ref{sec:xor-circuit-proof}.
Before diving into the proof of the proposition, we show how to derive Theorem~\ref{thm:long-around-1-inf-vol} from it.

\begin{proof}[Proof of Theorem \ref{thm:long-around-1-inf-vol}, given Proposition \ref{prop:PlanarGraphProp}]
	We may decompose the translation-invariant Gibbs measure $\bbP_{n,x}$ into a convex combination of ergodic measures~\cite[Theorem~14.17]{Geo11}.
	Therefore, we can assume that~$\bbP_{n,x}$ is ergodic and almost surely exhibits no bi-infinite paths.
	In particular, there exists~$R_1\geq 1$ such that
	\[
		\bbP_{n,x}(\exists \text{ a loop intersecting both } \Lambda_r(\zero) \text{ and } \Lambda_{R_1}(\zero)^c) < 1/4.
	\]
	By Proposition \ref{prop:PlanarGraphProp}, we can find~$R_2>R_1$ such that,
	\[
		\mu_{n,x}( \exists \text{ a defect-free circuit in } \Lambda_{R_2}(\zero) \text{ surrounding } \Lambda_{R_1}(\zero))  > 1/2.
	\]	
	Hence, with probability at least $1/4$, there exists a defect-free circuit in $\Lambda_{R_2}(\zero)$ which surrounds $\Lambda_{R_1}(\zero)$ and is disjoint from every loop that intersects $\Lambda_r(\zero)$. 
	Condition on the above event and denote the outermost circuit witnessing it by~$\Gamma$.
	Clearly, $\Gamma$ is~$(\omega_b,\eta)$-measurable.
	By applying the XOR operation to $\Gamma$, we may conclude that
	\[
	\bbP_{n,x}(\exists \text{ a loop surrounding } \Lambda_r(\zero)) > \tfrac{1}{8}.
	\]
	The limsup of the above event as $r$ grows is equal to the existence of infinitely many loops surrounding $\zero$; this event must therefore have probability at least $1/8$. In fact, the probability must be $1$, since the measure was assumed to be ergodic and the event is translation invariant. We complete the proof by noting that, if $\zero$ is surrounded by infinitely many loops, so is every other face.
\end{proof}

\subsection{Percolation formulation of existence of defect-free circuits}
\label{sec:graph-construction}

Throughout the section, we fix a loop configuration~$\omega$ with no bi-infinite paths and a domain~$\calD$ that contains~$\zero$.
All objects constructed in this section will depend on~$\omega, \calD$ but we will omit them from the notation for brevity. 
Additionally, we condition~$\mu_{\calD,n,x}^\omega$ on the event $\{\omega_r\cup \omega_b = \omega\}$. Thus, the remaining randomness involves only the edges of~$\eta$ and the coloring of the loops in~$\omega$.

The goal is to translate the existence of defect-free loops to the language of percolation processes. 
More specifically, we will construct a planar graph~$G$ and a 2-dependent site percolation process $\zeta$ on~$G$ determined by~$(\omega_r,\omega_b,\eta)$.
Heuristically, the construction is such that a defect-free circuit surrounding~$\Lambda_r$ exists, as soon as~$\zeta$ does not percolate.
We then show that this non-percolation probability is at least as large as the probability of the same event under an independent site percolation process. In Section~\ref{sec:benjamini-schramm}, we will approximate this process, defined on a sequence of finite graphs, by a process defined on their Benjamini--Schramm limit.

Let $V$ be the set consisting of loops of $\omega$ that intersect $\calD$, edges of $\calD$ that are disjoint from $\omega$, and faces of $\calD$. Define a projection map $\pi$ from the edges and faces of $\calD$ to $V$ so that every edge belonging to a loop is sent to that loop; $\pi$ is the identity map on all other edges and faces. We define a graph $G$ whose vertex set is $V$, and where two vertices~$u$ and~$v$ of~$G$ are linked by an edge if one of the following conditions is satisfied (see Figure \ref{fig:lattices}):
\begin{enumerate}[label=(E\arabic*)]
\item $u= \pi(\mathrm{face} \, f)$, $v=\pi(\mathrm{edge} \, e)$, for some face~$f$ and edge~$e$ bordering~$f$;
\item $u= \pi(\mathrm{edge} \, e_1)$, $v=\pi(\mathrm{edge} \, e_2)$, for some edges $e_1,e_2$ that share an endpoint.
\end{enumerate}

We define the {\em boundary of~$\calD$} as the set of edges that border the external face of~$\calD$; we denote it by~$\partial\calD$.
The image of~$\partial\calD$ under $\pi$ is called {\em the boundary of  $G$} and denote it by~$\partial G$.

In the next lemma, we show that $G$ is a planar graph (morally, a triangulation) whose distances are at most doubled, compared to the triangular lattice~$\bbT$.
Recall that $\Lambda_r\subset \bbH$ is induced by all vertices bordering faces of the ball of radius $r$ around $\zero$, whereas~$B_r$ is the combinatorial ball around the vertex associated with~$\zero$. A graph is called simple if it has no
 self-loops or multiple edges; it is non-simple otherwise.

\begin{lem}\label{lem:planarity}
	There exists a (non-simple) planar graph~$T$ on $V$ such that:
	\begin{itemize}
		\item each face in~$T$ bordered by some~$u\not\in \partial G$ has degree at most three;
		\item distinct~$u,v\in V$ are adjacent in~$T$ if and only if they are adjacent in~$G$.
	\end{itemize}	
	Additionally, $B_{2r+1}(\pi(f))\supseteq \pi(\Lambda_r)$.
\end{lem}

\begin{proof}
	Define $H$ to be the graph whose vertex set is comprised of all faces of $\calD$ and all edges contained in either $\calD$ or a loop of $\omega$ that intersects $\calD$. Two vertices of $H$ are neighbors in $H$ if they correspond to two edges that share an endpoint, or to a face and an edge that borders it.
	The graph~$H$ inherits from~$\bbH$ a natural embedding in the plane.

	Define~$T$ as the quotient graph of~$H$ with respect to the following relation on its vertices: $u$ and $v$ are identified if they correspond to edges of~$\bbH$ that are on the same loop of $\omega$. 
	We now verify the properties of~$T$.
	The vertex set of~$T$ is $V$. 
	By construction, each equivalence class is a finite connected set, and therefore~$T$ is planar (indeed, one can contract this connected set edge by edge).
	Under the natural embedding, all faces of~$H$ bordered by some~$u\in V\setminus \partial G$ are triangles.
	Since contracting edges can only reduces degrees of faces, this gives the desired bound on the degrees of faces of the quotient graph~$T$. 
	Finally, removing all multiple edges and self-loops from~$T$ gives $G$.

	Consider a face~$g$ of~$\Lambda_r$.
	The distance between~$f$ and~$g$ in~$H$ is at most~$2r$. 
	This implies that the distance in~$H$ between~$f$ and any face or edge in~$\Lambda_r$ is at most~$2r+1$. Since~$G$ is obtained from~$H$ by contraction and deletion of multiple edges and self-loops, the same inequality holds for distances in~$G$.
\end{proof}

\begin{figure}
	\begin{center}
		\includegraphics[width=0.39\textwidth, page=1]{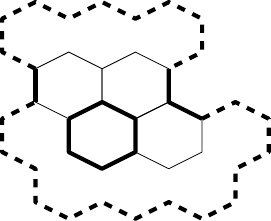}\quad
		\includegraphics[width=0.46\textwidth, page=2]{contraction5.pdf}
		\caption{The construction of the planar graph $G$. {\em Left:} A loop configuration $\omega$. Its intersection with $\calD$ is shown in bold and the edges outside are dotted. {\em Right:} The graph $G$ associated with $\omega$. To construct it, we associate a vertex to each loop of $\omega$ (big black circles), edge of $\calD$ disjoint with $\omega$ (blue dots), and face of $\calD$ (green hexagons). We add an edge between two vertices associated with a face and a bordering edge or loop, or two vertices associated with two edges sharing an endpoint or an edge and a loop sharing an element of $\calD$.}\label{fig:lattices}
	\end{center}
\end{figure}


To each vertex $\ell \in V$ corresponding to a loop in $\omega$, we will associate a subset of its neighbors in $G$, denoted by $I(\ell)$, and having the following properties:
\begin{enumerate}[label=(P\arabic*)]
\item\label{list:f-ell-faces} vertices in $I(\ell)$ correspond to faces of $\calD$,
\item \label{list:f-ell-deg} $|I(\ell)| \leq 5$, and
\item \label{list:f-ell-pairs} for every pair of loops $\ell_1,\ell_2$ such that $\dist_{G}(\ell_1,\ell_2) = 2$, there exists $f \in I(\ell_1) \cup I(\ell_2)$ such that $f$ neighbors both $\ell_1$ and $\ell_2$.
\end{enumerate}

\begin{lem}\label{lem:association}
There exists an association $I(\ell)$ which satisfies  properties \ref{list:f-ell-faces}, \ref{list:f-ell-deg}, \ref{list:f-ell-pairs}.
\end{lem}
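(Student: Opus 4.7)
The plan is to reduce Lemma~\ref{lem:association} to a graph-orientation argument on an auxiliary planar graph built from the loops of $\omega$ and their shared face-neighbors.

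First, I would show that condition~(P3) only concerns pairs of loops sharing a common face-neighbor. Suppose $\ell_1, \ell_2$ are at distance $2$ in $G_\calD(\omega)$ via a common edge-vertex $e \in \calD\setminus \omega$. Since distinct loops of $\omega$ are vertex-disjoint in the $3$-regular lattice $\bbH$, the two endpoints of $e$ lie one on $\ell_1$ and one on $\ell_2$. At each endpoint $u \in \ell_i$, the two other edges at $u$ are loop-edges of $\ell_i$, and each of the two hexagonal faces of $\bbH$ adjacent to $e$ has one of these loop-edges on its boundary. Hence both faces adjacent to $e$ are common face-neighbors of $\ell_1$ and $\ell_2$, and~(P3) reduces to covering all pairs of loops that share a face-neighbor in $G_\calD(\omega)$.

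Second, I would establish the bound $|L(f)| \leq 3$ for every face $f$ of $\calD$, where $L(f)$ denotes the set of loops bordering $f$. If two consecutive edges on the hexagonal boundary of $f$ both belong to $\omega$, they meet at a vertex whose third incident edge is forbidden from $\omega$ by the even-degree constraint; hence both $\omega$-edges lie in the same loop. Thus the $\omega$-edges on $\partial f$ decompose into at most $\lfloor 6/2 \rfloor = 3$ maximal arcs, yielding at most $3$ distinct loops bordering any face.

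Third, I would introduce the auxiliary graph $H$ whose vertices are the loops of $\omega$ and whose edges are pairs of loops sharing a face-neighbor. I claim that $H$ is planar. Indeed, start with the planar subgraph of $G_\calD(\omega)$ induced on loop- and face-vertices (planarity is inherited from Lemma~\ref{lem:planarity}); then replace each face-vertex $f$ by a clique on $L(f)$. Because $|L(f)| \leq 3$, each clique has at most $3$ vertices and, using the cyclic order of loops around $f$ given by the planar embedding, can be drawn strictly within a small neighborhood of the former face-vertex without crossings. Hence $H$ is planar.

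Fourth, by the classical $5$-degeneracy of simple planar graphs, the simple graph underlying $H$ admits an acyclic orientation with out-degree at most $5$ at every vertex. For each directed edge $\ell \to \ell'$ in this orientation, fix a canonical common face-neighbor $f_{\ell,\ell'}$ of $\ell$ and $\ell'$ and include it in $I(\ell)$. Properties~(P1) and~(P2) hold by construction, and~(P3) follows because every pair $(\ell_1, \ell_2)$ at distance $2$ corresponds to an oriented edge---say $\ell_1 \to \ell_2$---so $f_{\ell_1,\ell_2} \in I(\ell_1)$ is a common face-neighbor of both loops.

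The main obstacle will be the planarity claim in the third step: although the clique-replacement is locally benign at each face-vertex (by $|L(f)| \leq 3$), one must carefully describe the new clique edges so that the modifications corresponding to different face-vertices remain confined to disjoint regions of the planar embedding. Exploiting the cyclic order of loop-neighbors around each face-vertex inherited from the planar embedding of Lemma~\ref{lem:planarity} resolves this.
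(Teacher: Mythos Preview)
Your proposal is correct and follows essentially the same route as the paper: both arguments build the auxiliary simple planar graph on loops with an edge for each pair sharing a common face of $\calD$, then use $5$-degeneracy of planar graphs (you phrase it as an acyclic orientation with out-degree at most $5$; the paper phrases it as iterated removal of a vertex of degree at most $5$) to define $I(\ell)$. Your Steps~1--3 supply details the paper leaves implicit---namely, that distance-$2$ loop pairs through an edge-vertex also share a face (so (P3) reduces to face-adjacency), that $|L(f)|\le 3$, and that planarity of the auxiliary graph follows from $Y$--$\Delta$ replacements at face-vertices---but the underlying argument is the same.
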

\begin{proof}
	We use an auxiliary graph $G_{\mathsf{loop}}$ on the loops of $\omega$ that intersect~$\calD$, where two loops are linked by an edge if they border a common face $f$ of $\calD$. 
	We associate each edge of $G_{\mathsf{loop}}$ with an (arbitrarily chosen) face of $\calD$ that witnesses its existence.
	 The graph $G_{\mathsf{loop}}$ is simple, finite, and planar, and therefore, Euler's formula implies that it must have at least one vertex of degree five or less. Let $\ell$ be the loop corresponding to this vertex, and set $I(\ell)$ to be the faces associated with the edges incident to~$\ell$.
	 We then remove $\ell$ and its neighboring edges, creating a new planar graph; iteration completes the proof.
\end{proof}

For the remainder of the paper, we will fix an association $I$ which satisfies properties~(P1)-(P3). 
Given $n \geq 1$ and $x \in (0,1]$, we define a percolation process $\zeta \in \{0,1\}^{V}$ on the vertices of $G$ as follows:
\begin{itemize}
	\item for any $\ell \in V$ corresponding to a loop of $\omega$, set $\zeta(\ell) = 1$ with probability $(n-1)/n$, independently,
	\item for any $e \in V$ corresponding to an edge of $\calD\setminus \omega$, set $\zeta(e) = 1$ with probability $1- x$, independently, and
	\item for any $f \in V$ corresponding to a face of $\calD$ its state in~$\zeta$ is determined by the state of edges and loops bordering it:
	\begin{itemize}
		\item if $\zeta(e) = 1$ for some~$e$ bordering~$f$, then~$\zeta(f) = 1$;
		\item if $f \in I(\ell)$ and $\zeta(\ell) = 1$ for some loop~$\ell$, then~$\zeta(f) = 1$;
		\item if neither of the above holds, then~$\zeta(f) = 0$.
	\end{itemize}
\end{itemize}

Define $\nu_{G,n,x}$ to be the measure on $\{0,1\}^{V}$ associated with $\zeta$.
The projection of~$\zeta$ on loops of~$\omega$ and edges of~$\calD\setminus\omega$ defines a coherent triplet $(\omega_r,\omega_b,\eta)$: a loop $\ell$ is colored blue if $\zeta(\ell) =1$ and it is colored red otherwise; $\eta(e) = \zeta(e)$ for edges~$e\in \calD\setminus\omega$.

\begin{lem}[From~$\zeta$ to coherent triplets]
\label{lem:zeta-to-triplets}
	The mapping defined above is a bijection between~$\{0,1\}^{V}$ and the set of coherent triplets~$(\omega_r,\omega_b,\eta)$ satisfying~$\omega_r\cup \omega_b = \omega$.
	
	Additionally, the pushforward of~$\nu_{G,n,x}$ under this mapping is~$\mu_{\calD,n,x}^\omega(\cdot \mid \omega_r \cup \omega_b = \omega) $.
\end{lem}

The proof is straightforward and we omit it for brevity.

The process $\zeta$ is a 2-dependent process, in the sense that $\{\zeta(s)\}_{s \in S}$ are independent random variables whenever $S$ is a subset of vertices of $V$ whose pairwise distances are all at least 3. 
We call a vertex $v \in V$ open if $\zeta(v) =1$ and closed otherwise.
For any $A,B\subseteq V$, let $\{A \leftrightarrow B\}\subseteq \{0,1\}^{V} $ denote the event that there is a $\zeta$-open path from some vertex of $A$ to some vertex of $B$.

For a set~$S\subset V$, define its inner vertex boundary as the set of vertices in~$S$ that are adjacent to some vertices in~$V\setminus S$.
The outer boundary of~$S$ is defined as the set of vertices in~$V\setminus S$ that are adjacent to vertices in~$S$.

\begin{prop}\label{prop:PercolationBound}
	For any $n \geq 1$, $x \in (0,1]$, $r \in \bbN$,
	\[
			 \mu_{\calD,n,x}^\omega[\exists \text{ a defect-free circuit in $\calD$ surrounding }  \Lambda_r \mid \omega_r \cup \omega_b = \omega]
			 \geq \nu_{G,n,x}[B_{2r+2}(\pi(\zero)) \not\leftrightarrow  \partial G].
	\]
\end{prop}

\begin{proof}
	The connectivity in~$G$ is the same as in the embedded planar graph~$T$ described in Lemma~\ref{lem:planarity}.
	Therefore, below we discuss this statement for~$T$.
	
	Recall that all faces of~$T$, except those bordered only by vertices in~$\partial G$, have degree at most three.
	The core of the argument is the standard duality for site percolation on a triangulation applied to~$\zeta$ on~$T$.
	Indeed, let~$S$ be the union of connected components in~$\zeta$ that intersect~$B_{2r+2}(\pi(\zero))$.
	If it reaches~$\partial G$, then~$B_{2r+2}(\pi(\zero))$ is connected to~$\partial G$.
	Otherwise, the inner vertex boundary of~$S$ consists of vertices from~$V\setminus \partial G$.
	By the property of~$T$, all faces containing these vertices have degree at most three.
	Hence, the outer boundary of~$S$ forms a circuit of closed vertices.
	This dichotomy implies the following:
\[
\begin{split}
	\nu_{T,n,x}&[B_{2r+2}(\pi(\zero)) \not\leftrightarrow \partial G]  = \nu_{T,n,x}[\exists \text{ a closed circuit surrounding } B_{2r+2}(\pi(\zero))].
\end{split}
\]

	We claim that, if there exists a closed circuit surrounding $B_{2r+2}(\pi(\zero))$, there also exists a closed circuit which surrounds $B_{2r+1}(\pi(f))$ and consists only of vertices that correspond to loops of $\omega$ and edges of $\calD$. To prove this claim, we consider a simple circuit $\calC = \{v_0, \dots, v_N = v_0\}$ of closed vertices. We assume that this circuit is minimal, in the sense that, whenever $v_i, v_j$ are adjacent in $T$, $|i-j| = 1$.
	
	The claim follows if we can show that, whenever $\calC$ includes some vertex $v_i$ that corresponds to a face of $\calD$, we can locally modify $\calC$ in order to bypass $v_i$. 
	Let $N(v_i)$ be the set of neighbors of $v_i$ in $T$.
	Since~$v_i$ corresponds to a face, vertices in~$N(v_i)$ form a cycle of vertices that correspond to loops in~$\omega$ and edges in~$\calD\setminus \omega$ (and not to faces of~$\calD$).
	By minimality, $v_{i-1}$ and $v_{i+1}$ are not neighbors, and $\calC$ does not contain any other element of $N(v_i)$. 
	Therefore, we can partition $N(v_i) \setminus \{v_{i-1},v_{i+1}\}$ into two paths.
	If either of these paths contains only closed vertices, we may replace $v_i$ by this path, possibly decreasing the distance to $\pi(f)$ by one.
	
	Otherwise, there exist two open vertices in~$N(v_i)$ that are separated from one another by $\calC$. By the definition of $\zeta$ on faces of $\calD$, these vertices must correspond to loops of~$\omega$~--- denote them by~$\ell_1,\ell_2$.
	By \ref{list:f-ell-pairs}, there must be a face in $I(\ell_1) \cup I(\ell_2)$ neighboring both $\ell_1$ and $\ell_2$. 
	By definition of $\zeta$, this face is open.  
	Since~$\ell_1$ and~$\ell_2$ are separated from one another by~$\calC$, all faces of~$\calD$ that are adjacent to both of them must belong to~$\calC$
	However, $\calC$ consists of closed vertices.
	This is a contradiction, and the claim follows.

	Finally, assume there exists a closed circuit~$\calC$ which surrounds $B_{2r+1}(\pi(\zero))$ and contains only vertices corresponding to loops of~$\omega$ and edges of~$\calD\setminus\omega$.
	According to the bijection stated in Lemma~\ref{lem:zeta-to-triplets}, the vertices of~$\calC$ correspond to loops of~$\omega_r$ and edges in~$\calD\setminus(\omega\cup \eta)$.
	Viewed as subsets of edges of~$\calD$, their union must contain a defect-free circuit.
	By the second stipulation of Lemma~\ref{lem:planarity}, $\calC$ surrounds $\pi(\Lambda_r)$, whence the defect-free circuit surrounds~$\Lambda_r$.
\end{proof}

The final proposition of this section proves that $\zeta$ can be dominated by a fully independent percolation process. 
For~$\overline{p}:V\to [0,1]$, define~$\mathrm{Ber}_{\overline{p}}$ to be an independent inhomogeneous $\overline{p}$-Bernoulli percolation on~$V$ --- i.e. $\mathrm{Ber}_{\overline{p}}(v) = 1$ with probability $\overline{p}(v)$ and $\mathrm{Ber}_{\overline{p}}(v) = 0$ with probability $1-{\overline{p}}(v)$, for every~$v\in V$, independently. 
If~$\overline{p}\equiv p$, then we denote the process by~$\mathrm{Ber}_p$.
We also use the notation $\preceq_{\mathsf{st}}$ to indicate stochastic domination of measures with respect to the pointwise partial ordering of~$\{0,1\}^{V}$.

\begin{prop}\label{prop:Domination}
	There exists a continuous function $p(n,x): [1,\infty) \times (0,1]$ with $p(1,1) = 0$ such that $\nu_{G,n,x} \preceq_{\mathsf{st}} \mathrm{Ber}_{p(n,x)}$.
\end{prop}

\begin{proof}
	Set~$p_n:=\left(\tfrac{n-1}{n}\right)^{1/6}$ and~$p_x:= (1-x)^{1/3}$.
	For every edge~$e$ of~$\calD\setminus\omega$, define~$S_e\subset V$ to be the set that consists of edge~$e$ and all faces that border it. Further, for every loop~$\ell$ of~$\omega$, define~$S_\ell:=I(\ell)\cup \{\ell\}$.
	We define inhomogeneous independent Bernoulli percolation processes~$\{X_v\}$, indexed by loops of~$\omega$ and edges of~$\calD\setminus\omega$:
	\begin{itemize}
		\item if~$v$ is a loop of~$\omega$, then the percolation parameter equals~$p_n$ on~$S_v$ and~$0$ on its complement;
		\item if~$v$ is an edge of~$\calD\setminus \omega$, then the percolation parameter equals~$p_x$ on~$S_v$ and~$0$ on its complement.
	\end{itemize}
	Next, we define a second set of independent percolation processes~$\{Y_v\}$ with the same index set, which are coupled to $\{X_v\}$ as follows: if~$X_v = 1$ for all $v \in S_v$, set $Y_v = 1$ for all $v \in S_v$; otherwise, set $Y_v = 0$ for all $v \in S_v$. The process $Y_v$ is deterministically set to zero on the complement of $S_v$.  
	
	Set $\bar{X}$ to be the pointwise maximum of $\{X_v\}$ and $\bar{Y}$ to be the pointwise maximum of $\{Y_v\}$. Since $Y_v \leq X _v$, we conclude that $\bar{Y} \preceq_{\mathsf{st}} \bar{X}$. Furthermore, since $|S_v| \leq 6$ when $v$ is a loop of $\omega$ and $|S_v| \leq 3$ when $v$ is an edge of $\calD \setminus \omega$, a union bound implies that $\bar{X} \preceq_{\mathsf{st}} \mathrm{Ber}_{p(n,x)}$, where
	\[
	p(n,x) = \min\{6p_x+3p_n, 1\}.
\]
The proposition follows if we can show that $\zeta$, distributed as $\nu_{G,n,x}$, is stochastically dominated by $\bar{Y}$. Indeed, the probability that $X_v(v) = 1$ is $p_n^{|S_v|} \geq (n-1)/n$ if $v$ is a loop of $\omega$, and $p_x^{|S_v|} \geq 1-x$ if $v$ is an edge of $\calD \setminus \omega$. The desired stochastic domination follows from the construction of $\zeta$. 
\end{proof}

\subsection{Benjamini--Schramm limits}
\label{sec:benjamini-schramm}

Instead of analyzing the Bernoulli site percolation process on an abstract finite planar graph, we will study these processes by considering an infinite planar graph that captures the properties of a sequence of graphs $\{G_k\}$ {\em centered around a typical vertex}. The formal framework for doing this is given by Benjamini--Schramm convergence \cite{BenSch01}. In this section, we will recall its definition and collect some useful results about such limits.

Given a sequence of finite graphs $\{G_k\}$, let $\bbU^V_k$ be the uniform measure on the vertices of $G_k$, and let $o_k$ be sampled according to $\bbU^V_k$. We say that $\{G_k\}$ converges in the {\em Benjamini--Schramm} sense~\cite{BenSch01} to the random rooted graph $(G, o)$, whose distribution is denoted by~$\mathbb{P}_\infty$, if, for every fixed $r$ and finite graph $H$,
\[
	\bbU^V_{k} [ B_r(o_k) = H] \to \mathbb{P}_\infty[B_r(o)=H].
\]
We emphasize that $\bbP_\infty$ is a joint distribution on both $G$ and the random root $o$.

Our aim is to state a sufficient condition for the existence of Benjamini--Scramm subsequential limits for a sequence of graphs $\{G_k\}$. This is not trivial, as can be seen from the following example: suppose $G_k$ is a star on $k$ vertices. Then $B_2(v) = G_k$ for every vertex $v \in G_k$, and thus $\bbU^V_{k}[ B_2(o_k) = H]$ converges to zero for every finite $H$. 

In the original paper formulating this notion of convergence~\cite{BenSch01}, it was shown that, whenever $\{G_k\}$ has uniformly bounded degrees, the sequence must have subsequential limits. This condition is too strong for our purposes~--- indeed, degrees of loops are not necessarily bounded. 
Luckily, it can be replaced by a uniform integrability condition on the degrees of $G_k$. More precisely, we say that $\{G_k\}$ has uniformly integrable degrees if  
\[
	\lim_{r \to \infty} \sup_{k \to \infty} \tfrac{1}{|V_k|}\sum_{v \in V_k} \deg(v)\1_{\deg(v)\geq r}  = 0,
\]
where~$\deg_{G_k}(v)$ is the degree of $v$~$G_k$.

\begin{prop}\label{prop:BSconvergence}\cite[Theorem 3.1]{BenLyoSch15}
If $\{G_{k}\}$ has uniformly integrable degrees, then there exists a subsequence that converges in the Benjamini-Schramm sense.
\end{prop}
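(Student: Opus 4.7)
The plan is to establish tightness of the laws of the random rooted graphs $(G_k, o_k)$, where $o_k$ is sampled uniformly from $V(G_k)$ conditional on $G_k$, in the Polish space $\calG_*$ of isomorphism classes of locally finite rooted graphs equipped with the usual Benjamini--Schramm metric $d((G,o),(G',o')) = \inf\{1/(r+1) : B_r(o) \cong B_r(o')\}$. Recall that a subset of $\calG_*$ is relatively compact iff the function $(G,o) \mapsto |B_r(o)|$ is bounded on it for every radius $r$; by a diagonal argument, tightness in $\calG_*$ therefore reduces to the following statement: for every $r \geq 0$ and every $\varepsilon > 0$, there exists $N = N(r,\varepsilon)$ such that $\mathbb{P}[|B_r(o_k)| \leq N] \geq 1 - \varepsilon$ for all $k$. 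Once this is in hand, Prokhorov's theorem produces a weakly convergent subsequence, and weak convergence in $\calG_*$ is precisely Benjamini--Schramm convergence.

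I would control $|B_r(o_k)|$ via the maximum degree inside the ball, $M_r(o_k) := \max_{v \in B_r(o_k)} \deg_{G_k}(v)$, through the crude BFS bound $|B_r(o_k)| \leq (1 + M_{r-1}(o_k))^r$, so that uniform tightness of $M_{r-1}(o_k)$ propagates to uniform tightness of $|B_r(o_k)|$. Writing $V_R(G) := \{v \in V(G) : \deg_G(v) \geq R\}$ and $N_r(S)$ for the graph $r$-neighbourhood of $S$, the events $\{M_r(o_k) \geq R\}$ and $\{o_k \in N_r(V_R(G_k))\}$ coincide. The base case $r=0$ is immediate: $\mathbb{P}[\deg(o_k) \geq R] \leq \tfrac{1}{R}\, \mathbb{E}[\deg(o_k) \mathbf{1}_{\deg(o_k) \geq R}]$, which tends to zero as $R \to \infty$ uniformly in $k$ by the hypothesis.

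For the inductive step, the layer-growth bound $|N_{r+1}(V_R)| \leq |N_r(V_R)| + \sum_{u \in N_r(V_R)} \deg_{G_k}(u)$, split at a second cutoff $R'$, gives
\[
|N_{r+1}(V_R)| \leq (1+R')\,|N_r(V_R)| + \sum_{u:\, \deg_{G_k}(u) > R'} \deg_{G_k}(u).
\]
Dividing by $|V(G_k)|$, taking $\mathbb{E}_k$, and recognising that the expectation of the second summand divided by $|V(G_k)|$ equals exactly $\mathbb{E}[\deg(o_k) \mathbf{1}_{\deg(o_k) > R'}]$, one arrives at
\[
\mathbb{P}[o_k \in N_{r+1}(V_R)] \leq (1+R')\, \mathbb{P}[o_k \in N_r(V_R)] + \mathbb{E}[\deg(o_k)\, \mathbf{1}_{\deg(o_k) > R'}].
\]
Given $\varepsilon > 0$, I would first invoke uniform integrability to choose $R'$ so that the second term is below $\varepsilon/2$ uniformly in $k$, and then invoke the inductive hypothesis to pick $R$ so that the first term is also below $\varepsilon/2$. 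This closes the induction, yielding uniform tightness of $M_r(o_k)$ for every $r$ and hence of $|B_r(o_k)|$.

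The main obstacle is precisely producing the above two-parameter inequality. A naive attempt to bound $|N_r(V_R)| \leq \sum_{v \in V_R} |B_r(v)|$ is hopeless, since uniform integrability offers no a priori control on $|B_r(v)|$ for a high-degree vertex $v$ --- whose neighbours may themselves be of arbitrarily large degree, compounding across layers. Only by exposing the BFS one layer at a time, and at each layer separating the newly revealed vertices by whether their degree exceeds a secondary cutoff $R'$, is uniform integrability strong enough to feed the recursion. Everything else --- the Polish structure of $\calG_*$, the compactness criterion via bounded $r$-balls, the diagonal extraction, and Prokhorov's theorem --- is standard.
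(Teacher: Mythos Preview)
Your proof is correct and is essentially the paper's argument: both control the probability that $o_k$ lies within graph-distance $r$ of a high-degree vertex by peeling off one layer at a time and, at each layer, splitting the newly exposed vertices by a secondary degree cutoff so that uniform integrability bounds the high-degree contribution while the low-degree part is absorbed into the recursion. The packaging differs only cosmetically --- you invoke Prokhorov on the Polish space $\calG_*$ where the paper diagonalises by hand over finite rooted balls, and you work with cumulative neighbourhoods $N_r(V_R)$ and a single auxiliary cutoff $R'$ chosen afresh at each inductive step where the paper uses layer sets $S_i$ and a descending sequence of cutoffs $T_0,\dots,T_{r-2}$ fixed in advance --- but the combinatorial core is identical.
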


\begin{rem}
	We observe that uniform integrability of degrees is not a necessary condition for the existence of subsequential limits. To see this, consider the sequence $\{G_k\}$ given by setting $G_k$ to be the disjoint union of a cycle on $k - \sqrt{k}$ vertices with a clique on $\sqrt{k}$ vertices. This sequence does not have uniformly integral degrees, but converges to $\mathbb{Z}$, rooted at the origin, in the sense of Benjamini--Schramm.
\end{rem}

In light of Proposition~\ref{prop:PercolationBound}, we will also need to consider the boundary of the planar graphs $G$ described in the previous section. For~$k\geq 1$, let $G_k$ be a graph with vertex-set~$V_k$ and let~$\partial G_k\subset V_k$ be a nonempty subset of vertices that we call the boundary.
The sequence~$\{G_k\}$ is called {\em F{\o}lner} with respect to $\{\partial G_k\}$ if
\[
	\lim_{k \to \infty} \frac{ |\partial G_k|}{|V_k|} = 0.
\]
Note that, if a sequence of graphs is F{\o}lner, $|V_k| \to \infty$. A corollary of Proposition~\ref{prop:BSconvergence} involves the typical distance from the root to the boundary of a F{\o}lner sequence of graphs.
\begin{cor}\label{cor:boundary}
	Let $\{G_k\}$ be a F{\o}lner sequence with respect to $\{\partial G_k\}$ with uniformly integrable degrees. Then, for any integer $r \geq 1$,
	\[
		\lim_{k \to \infty} \bbU^V_k [ \dist_{G_k}(o_k, \partial G_k) \leq r] = 0.
	\]
\end{cor}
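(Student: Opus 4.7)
The plan is to show directly that the $r$-neighborhood of the boundary, $N_r(\partial G_k) := \{v \in V(G_k) : \dist_{G_k}(v, \partial G_k) \leq r\}$, satisfies $\mathbb{E}_k[|N_r(\partial G_k)|]/|V(G_k)| \to 0$ as $k \to \infty$, which is equivalent to the desired statement since $o_k$ is uniformly chosen. To this end, I would mimic the iterative argument in the proof of Proposition~\ref{prop:BSconvergence}, but starting the iteration from the boundary rather than from the set of high-degree vertices. Specifically, let $S_0 := \partial G_k$ and, for $1 \leq i \leq r$, let $S_i := \{u : u \sim S_{i-1}\} \setminus (S_0 \cup \cdots \cup S_{i-1})$, so that $N_r(\partial G_k) = \bigcup_{i=0}^{r} S_i$.

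Setting $q_i := \mathbb{E}_k[|S_i|]/|V(G_k)|$, the F{\o}lner property gives $q_0 \to 0$. The key inequality, obtained by partitioning $S_{i-1}$ into vertices of degree at most $T_{i-1}$ and vertices of degree exceeding $T_{i-1}$, reads
\[
q_i \leq T_{i-1} \cdot q_{i-1} + \delta_{i-1}, \qquad \delta_{i-1} := \mathbb{E}_k\bigl[\deg_{G_k}(o_k) \1_{\deg_{G_k}(o_k) \geq T_{i-1}}\bigr],
\]
exactly as in \eqref{eq:p-i-bound}, because the number of edges emanating from $S_{i-1}$ (which upper-bounds $|S_i|$) is controlled by the degree truncation together with the tail of $\deg_{G_k}$. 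Iterating this bound yields, analogously to \eqref{eq:sum-p-i-bound-0}, an estimate of the form
\[
\sum_{i=0}^{r} q_i \;\leq\; M_0 \cdot q_0 \;+\; \sum_{i=1}^{r} M_i \cdot \delta_{i-1},
\]
where the constants $M_i$ depend only on $T_i, T_{i+1}, \dots, T_{r-1}$.

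Fix $\varepsilon > 0$. As in the earlier proof, I would choose the truncation levels $T_{r-1}, T_{r-2}, \dots, T_0$ in reverse order: pick $T_{r-1}$ large enough that $\limsup_k \delta_{r-1} < \varepsilon/(2^{r+1} M_{r-1})$; this fixes $M_{r-2}$, and so on down to $M_0$. By uniform integrability, each such $T_i$ exists. With the $T_i$'s now fixed, $M_0$ is a fixed constant, and the F{\o}lner property forces $M_0 \cdot q_0 < \varepsilon/2$ for all sufficiently large $k$. Combining gives $\limsup_k \sum_{i=0}^{r} q_i \leq \varepsilon$, which suffices since $\varepsilon$ was arbitrary.

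The main obstacle, as in Proposition~\ref{prop:BSconvergence}, is to avoid having the truncation levels depend on each other in a way that breaks the chain of choices; this is handled precisely by the reverse-order selection above, whereby $T_{i-1}$ is chosen after $T_i, \ldots, T_{r-1}$ are already fixed, so that $M_i$ is a fixed constant at the moment one needs to control $\delta_{i-1}$. Beyond this bookkeeping, the argument is a direct adaptation of the one already written out for Proposition~\ref{prop:BSconvergence}, with the initial layer $S_0$ replaced by $\partial G_k$ and the asymptotic smallness of $|S_0|/|V(G_k)|$ now coming from the F{\o}lner hypothesis rather than from Markov's inequality applied to the degree.
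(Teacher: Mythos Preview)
Your argument is correct, modulo a small indexing slip: the coefficient of $\delta_{r-1}$ in the iterated bound is $M_r=1$, not $M_{r-1}$, and choosing $T_{r-1}$ fixes $M_{r-1}$ rather than $M_{r-2}$. These are bookkeeping errors only; the logic of selecting the $T_i$ in reverse order so that each $M_i$ is fixed before you need to control $\delta_{i-1}$ is exactly right, and the final step --- using the F{\o}lner hypothesis to force $M_0\,q_0\to 0$ once $M_0$ is a fixed constant --- is the correct replacement for the Markov-inequality step in Proposition~\ref{prop:BSconvergence}.

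Your route differs genuinely from the paper's. The paper argues by contradiction: assuming the $r$-neighborhood of $\partial G_k$ has density bounded below along a subsequence, it builds an auxiliary sequence $\tilde G_k$ by adding a single vertex joined to every boundary vertex, observes that $\tilde G_k$ still has uniformly integrable degrees (here the F{\o}lner property is used to control the degree of the new vertex), and then notes that no Benjamini--Schramm subsequential limit can exist because a ball of radius $2r+2$ around the root swallows a positive fraction of the graph --- contradicting Proposition~\ref{prop:BSconvergence}. Your approach instead re-opens the proof of Proposition~\ref{prop:BSconvergence} and runs the same layer-by-layer degree-truncation estimate with $S_0=\partial G_k$ in place of the high-degree set. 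The paper's argument is slick in that it treats Proposition~\ref{prop:BSconvergence} as a black box via a one-vertex trick; yours is more transparent and gives a direct quantitative bound on $|N_r(\partial G_k)|/|V(G_k)|$ without any auxiliary construction.
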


\begin{proof}
	Assume, for the sake of contradiction, that there exist $\varepsilon >0$ such that, for infinitely many $k$'s,
	\[
		\bbU^V_k[ \dist_{G_k}(o_k, \partial G_k) \leq r] > \varepsilon.
	\]
	This implies that, for infinitely many $k$'s,
	\[
		\left|\bigcup_{u \in \partial G_k} B_r(u) \right| > \varepsilon |V(G_k)|.
	\]
	Now, consider the auxiliary graph sequence $\{\tilde{G}_k\}$, given by adding one vertex to $G_k$ and connecting it to every vertex in $\partial G_k$. No subsequene of this sequence can converge in the Benjamini--Schramm sense, as, with probability at least $\varepsilon$, a ball of radius $2r + 2$ around $o_k$ has volume at least $\varepsilon |V(G_k)|$. Since $\{G_k\}$ is F{\o}lner with respect to $\{\partial G_k\}$,  $|V(G_k)| \to \infty$, meaning $B_{2r+2}(o_k)$ cannot converge to any finite graph with positive probability.  However, since $\{G_k\}$ has uniformly integrable degrees and is F{\o}lner with respect to $\{\partial G_k\}$, the sequence $\{\tilde{G}_k\}$ must also have uniformly integrable degrees. This contradicts Lemma~\ref{prop:BSconvergence}, as required.
\end{proof}

The final ingredient is a statement about Bernoulli site on Benjamini--Schramm limits of finite planar graphs.

\begin{thm}\label{thm:Ron}\cite{Pel19}
There exists a $p_0 >0$ such that the following holds. Let $\{G_k\}$ be a sequence of finite simple planar graphs that converges to a limit $(G,o)$ in the sense of Benjamini--Schramm.  Then
	\[
		\mathrm{Ber}_{p_0}\left( \exists \text{ an infinite }  \text{open path in } G \right) = 0,
	\]
	where~$\mathrm{Ber}_p$ is the independent $p$-Bernoulli percolation on vertices of~$G$.
\end{thm}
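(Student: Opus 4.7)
The plan is to exploit the unimodularity of the Benjamini--Schramm limit together with the topological rigidity enforced by planarity, combined with the uniform integrability of degrees that is automatic for such limits.

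First, I would check that $(G,o)$ inherits planarity from the approximating sequence: since planarity is equivalent to forbidding $K_5$ and $K_{3,3}$ as minors, and the presence of either is witnessed inside a finite ball around the relevant vertex, planarity is a local property preserved under Benjamini--Schramm convergence. The standard Aldous--Lyons theory moreover guarantees that $(G,o)$ is a unimodular random rooted graph, so the mass transport principle applies: for any nonnegative Borel function $F$ with the appropriate transport symmetry, $\mathbb{E}\bigl[\sum_v F(G,o,v)\bigr] = \mathbb{E}\bigl[\sum_v F(G,v,o)\bigr]$. From Proposition~\ref{prop:BSconvergence} and the uniform integrability of degrees in the approximating sequence, the root degree $\deg_G(o)$ is integrable under $\mathbb{P}_G$.

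Second, I would truncate by degree. Fix a large threshold $T$, call a vertex \emph{light} if its degree is at most $T$ and \emph{heavy} otherwise, and let $G_T$ denote the subgraph induced on the light vertices. By uniform integrability, the expected number of heavy neighbors of $o$ in $G$ can be made arbitrarily small by choosing $T$ large. For the bounded-degree piece, I would invoke a classical planar site-percolation bound of the following shape: there exists $q_0(T) > 0$ such that Bernoulli site percolation at parameter below $q_0(T)$ has no infinite cluster on any simple planar graph of maximum degree at most $T$. Such a bound can be produced via circle packings, isoperimetric estimates, or by controlling the connective constant via Euler's formula.

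Third, I would decompose the event of an infinite $\mathrm{Ber}_p$-open cluster into two cases: either the cluster is entirely contained in $G_T$, or it traverses some heavy vertex. The former is ruled out for $p < q_0(T)$ by the classical bound. For the latter, mass transport is the key tool: if an infinite open cluster passes through a heavy vertex, then that vertex has at least two open neighbors lying in infinite components of $G_T$; the expected amount of mass in such configurations at the root is bounded in terms of the expected number of heavy neighbors of $o$ (small by uniform integrability) and the probability that a light neighbor lies in an infinite $G_T$-cluster (zero at $p < q_0(T)$). Choosing $T$ large and then $p$ smaller than $q_0(T)$ yields a universal $p_0 > 0$ that works for every sequence $\{G_k\}$.

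The main obstacle will be the interaction between unbounded degrees and planarity: classical planar percolation bounds depend sensitively on the maximum degree, so extracting a universal threshold requires the Euler-formula constraint that planar graphs have average degree at most $6$ to dominate the presence of a few high-degree ``hub'' vertices. Bridging this gap is where the combination of unimodularity (giving mass transport), planarity (giving the average-degree bound), and uniform integrability (bounding the mass of heavy vertices) is essential, and I expect this to be the delicate technical step of the proof.
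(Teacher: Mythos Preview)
The paper does not prove Theorem~\ref{thm:Ron}; it is quoted as an external input from~\cite{Pel19} and used as a black box in the proof of Proposition~\ref{prop:PlanarGraphProp}. So there is no ``paper's own proof'' to compare against.

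On its own merits, your outline has a genuine gap. The bounded-degree step you invoke gives only $q_0(T)\ge 1/(T-1)$ (path counting), and this is essentially sharp: the $d$-regular tree is planar with site-percolation threshold $1/(d-1)$, so no bound of the form ``$p_c\ge q_0>0$ for all simple planar graphs of maximum degree $T$'' can be made uniform in $T$. Your sentence ``choosing $T$ large and then $p$ smaller than $q_0(T)$ yields a universal $p_0>0$'' therefore does not follow: making $T$ large forces $q_0(T)\to 0$, and nothing in the light/heavy decomposition recovers a threshold independent of $T$. You flag this yourself in the last paragraph as ``the delicate technical step,'' but the proposal contains no mechanism to close it; the Euler bound $\mathbb{E}[\deg(o)]\le 6$ controls the \emph{density} of heavy vertices, not the connectivity they create, and that is exactly what matters for percolation.

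The mass-transport step is also not correct as written. It is not true that a heavy vertex on an infinite open path must have ``at least two open neighbors lying in infinite components of $G_T$'': the path can visit several heavy vertices consecutively, or the light pieces between heavy vertices can all be finite while the full path is infinite. So the transport you set up does not capture the event you want to exclude. Any successful argument has to use something specific to Benjamini--Schramm limits of \emph{finite} planar graphs---ruling out, for instance, tree-like local geometry despite planarity---and your decomposition does not access that.
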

We emphasize that the choice of $p_0$ does not depend on the sequence of graphs $\{G_k\}$.

\subsection{Proof of Proposition \ref{prop:PlanarGraphProp}}
\label{sec:xor-circuit-proof}

Let $\bbP_{n,x}$ be a translation-invariant Gibbs measure for the loop $O(n)$ model, where the parameters satisfy \eqref{eq:theorem-1-regime}, and $\mu_{n,x}$ be its associated defect representation.

Recall that~$\Lambda_k$ denotes the ball of radius~$k$ around~$\zero$ defined in Section~\ref{sec:intro}.
For~$k\geq 1$ and a loop configuration~$\omega$, define~$G_k(\omega)$ as the graph~$G$ introduced in Section~\ref{sec:graph-construction} for~$\Lambda_k$ and~$\omega$.
In the proofs below, we do not explicitly mention the dependence of this graph on~$\omega$ and simply write~$G_k$.

\begin{lem}
\label{lem:uniform-integrability}
	Assume that $\bbP_{n,x}$ exhibits no bi-infinite paths almost surely.
	Then, the sequence of graphs~$\{G_k\}$ has uniformly integrable degrees, for~$\bbP_{n,x}$-almost every~$\omega$.
\end{lem}

\begin{proof}
	Denote the set of vertices of~$G_k$ by~$V_k$ and the set of faces of~$\Lambda_k$ by~$F_k$.
	
	The degree of a loop~$\ell$ of~$\omega$ in~$G_k$ is bounded above by the number of faces of~$\Lambda_k$ bordered by~$\ell$ plus the number of edges in~$\Lambda_k\setminus\omega$ that have an endpoint on~$\ell$.
	Thus, the degree of~$\ell$ is smaller or equal than three times the number of faces of~$\Lambda_k$ bordered by~$\ell$.
	Summing the degrees of loops in~$\omega$ that intersect~$\Lambda_k$, we will count every face of~$\Lambda_k$ at most three times.
	Since each edge of~$\Lambda_k\setminus\omega$ or face of~$\Lambda_k$ has degree at most~$6$ in~$G_k$, we find that, for any~$r>6$, 
	\begin{equation}\label{eq:length-bound}
\sum_v \deg(v)\, \1_{\deg(v)\geq r}
			\leq  9 \cdot |\{\text{faces of } \Lambda_k \text{ bordering a loop with degree} \geq r\}|.
	\end{equation}
	By the above, the degree of~$\ell$ is smaller or equal than three times its length.
	Define~$A_r(f,\omega)$ as the event that a face~$f$ borders a loop in $\omega$ of length at least~$r/3$.
	Since~$|V_k|\geq |F_k|$, we obtain
	 \[
		\tfrac{1}{|V_k|}\sum_v \deg(v)\1_{\deg(v)\geq r}\leq 9 \cdot \bbU_k^F \left(A_r({\bf f}_k,\omega) \right),
	\]
	where $\bbU_k^F$ is the uniform measure on the faces of $\Lambda_k$, independent of~$\omega$, and~${\bf f}_k \sim \bbU_k^F$.
	
	
	It is standard (and easy to prove) that the supremum in the definition~\eqref{eq:ui-def} of the uniform integrability can be replaced by the limit superior.
	Thus, it remains to show that for~$\bbP_{n,x}$-almost every~$\omega$, we have
	\[
		\lim_{r \to \infty}  \limsup_{k \to \infty} \, \bbU_k^F \left(A_r({\bf f}_k,\omega) \right) = 0.
	\]
	For $f\in F_k$, define~$\tau_f$ as the translation of the triangular lattice such that~$\tau_f(\zero)=f$. Then,
	\[
		\bbU_k^F \left(A_r({\bf f}_k,\omega) \right) = \tfrac{1}{|F_k|}\sum_{f\in F_k}\1_{A_r(\zero,\tau_f^{-1}\omega)}.
	\]
	By translation invariance of~$\bbP_{n,x}$ and the ergodic theorem, we obtain that there exists a random variable~$Z_r(\omega)$ such that, for~$\bbP_{n,x}$-almost every~$\omega$,
	\[
		\lim_{k \to \infty}\bbU_k^F \left(A_r({\bf f}_k,\omega)\right) = Z_r(\omega).
	\]
	Furthermore, $Z_r(\omega)$ is invariant to translations.
	Indeed, define~$\tau_1$ as the translation of the triangular lattice by one to the right.
	Then~$Z_r(\omega) = Z_r(\tau_1\omega)$, since
	\[
		|\tfrac{1}{|F_k|}\sum_{f\in F_k}\1_{A_r(\zero,\tau_f^{-1}\omega)}
		-\tfrac{1}{|F_k|}\sum_{f\in F_k}\1_{A_r(\zero,\tau_f^{-1}\tau_1\omega)}|
		\leq \frac{|F_k \triangle \tau_1 F_k|}{|F_k|} \xrightarrow[k\to\infty]{} 0.
	\]
	Appealing to the ergodic theorem again, we get an expression for the expectation over~$\bbP_{n,x}$:
	\[
		\bbE_{n,x}(Z_r(\omega)) = \bbP_{n,x}(A_r(\zero,\omega))
	\]
	Take any~$\varepsilon>0$. Since~$Z_r$ is decreasing in~$r$,
	\begin{align*}
		\bbP_{n,x}(\lim_{r\to\infty} Z_r(\omega) > \varepsilon) 
		&= \lim_{r\to\infty} \bbP_{n,x}(Z_r(\omega) > \varepsilon)\\
		&\leq \tfrac{1}{\varepsilon}\lim_{r\to\infty} \bbE_{n,x}(Z_r(\omega)) 
		=  \tfrac{1}{\varepsilon} \lim_{r\to \infty} \bbP_{n,x}(A_r(\zero,\omega)).
	\end{align*}
	The limit on the right-hand side equals the probability that~$\zero$ borders a bi-infinite path, which is zero by our assumption.
\end{proof}

We are now ready to prove Proposition~\ref{prop:PlanarGraphProp}, which implies Theorem~\ref{thm:long-around-1-inf-vol}, as we have already shown.

\begin{proof}[Proof of Proposition~\ref{prop:PlanarGraphProp}]
	For integers $r,k$ and a face $f$, let $\mathrm{Circ}_{r,k}(f)$ be the event that there exists a defect-free circuit in $\Lambda_k$ which surrounds $\Lambda_r(f)$. 
	As in the proof of Lemma~\ref{lem:uniform-integrability}, we set~$\bbU_k^F$ to be the uniform measure on faces of~$\Lambda_k$ and sample~${\bf f}_k$ from~$\bbU_k^F$, independently of $\omega$.
	By translation invariance, inclusion of events, and the Gibbs property, we have that, for any integer $k$,
	\begin{align*}
		\mu_{n,x} [\exists \text{ a defect-free circuit} \text{ surrounding } &\Lambda_r]
		 \geq  (\mu_{n,x}\otimes \bbU_k^F)\left[\mathrm{Circ}_{r,k}(\mathbf{f}_k)\right] \\ & = \bbE_{n,x} \left[(\mu_{\Lambda_k, n,x}^{\omega}\otimes \bbU_k^F)\left[\mathrm{Circ}_{r,k}(\mathbf{f}_k) \mid \omega_r \cup \omega_b = \omega\right]\right].
	\end{align*}
By Fatou's lemma, the proposition will be proved if we can show that, for $\bbP_{n,x}$-almost every~$\omega$, 
\[
\lim_{k} \,  (\mu_{\Lambda_k, n,x}^{\omega}\otimes \bbU_k^F)\left[\mathrm{Circ}_{r,k}(\mathbf{f}_k) \mid \omega_r \cup \omega_b = \omega\right] = 1.
\]
To that end, for the remains of the proof, we fix~$\omega$ sampled from~$\bbP_{n,x}$.
By Lemma~\ref{lem:uniform-integrability}, we can assume that the degrees of~$\{G_k\}$ are uniformly integrable.

	Define~$\bbU_k^{V}$ to be the uniform measure on vertices of~$G_k$ and consider~$o_k$ to be distributed as~$\bbU_k^{V}$.
	By Lemma~\ref{lem:uniform-integrability} and Proposition~\ref{prop:BSconvergence}, there exists an infinite planar rooted graph $(G,o)$ that is a subsequential Benjamini--Schramm limit of this sequence.
	Let~$\bbE_{\infty}$ denote the expectation operator associated with the joint distribution of~$(G,o)$.
	
	Fix~$p_0\in (0,1)$ that will be determined later.
	Define~$\mathrm{Ber}_{p_0}^k$, $k\geq 1$ (resp. $\mathrm{Ber}_{p_0}$) as a Bernoulli site percolation on~$G_k$ (resp. $G$) of parameter~$p_0$.
	
	Take~$r\geq 1$ and~$R\geq 2r+2$.
	Since the event $B_{2r+2}\left(o_k\right) \leftrightarrow B_{R}^c(o_k)$ is local, in the sense that it is measurable with respect to the restriction of the Bernoulli percolation to a ball of radius $R+1$ around $o_k$, we find that
	\[
		\lim_{k \to \infty} \, 
		(\bbU_k^V\otimes\mathrm{Ber}_{p_0}^k) \left[B_{2r+2}\left(o_k\right) \leftrightarrow B_{R}^c\left(o_k\right) \right] = 		
		\bbE_\infty\left[ \mathrm{Ber}_{p_0} \left[B_{2r+2}\left(o\right) \leftrightarrow B_{R}^c\left(o\right) \right]\right],
	\]
	where the limit may be taken along a converging subsequence.
	
	Since the event on the right-hand side is decreasing in $R$, the limit of the probability is equal to the probability of the infinite intersection --- namely, the probability that $B_{2r+2}(o)$ intersects an infinite open path. 	
	By Theorem~\ref{thm:Ron} from~\cite{Pel19}, we may choose~$p_0>0$ sufficiently small, so that this probability is zero, whence
	\[
		\lim_{R\to \infty}\lim_{k \to \infty} \, (\bbU_k^V\otimes\mathrm{Ber}_{p_0}^k) \left[B_{2r+2}\left(o_k\right) \leftrightarrow B_{R}^c\left(o_k\right) \right] = 0.
	\]
	Thus, for any~$\varepsilon,r>0$, we may choose $R$ large enough so that, for all $k$ large enough,
	\begin{equation}\label{eq:bound-connection}
		(\bbU_k^V\otimes\mathrm{Ber}_{p_0}^k) \left[B_{2r+2}\left(o_k\right) \leftrightarrow B_{R}^c\left(o_k\right) \right] < \varepsilon.
	\end{equation}
	Next, we recall from Section~\ref{sec:graph-construction} that $\partial G_k$ is the set of vertices of $G_k$ that are in the image of boundary edges of $\Lambda_k$. It is straightforward to check that the sequence $\{G_k\}$ is  F{\o}lner with respect to $\{\partial G_k\}$.  Thus, Corollary~\ref{cor:boundary} allows us to deduce that, for the $\varepsilon$ and $R$ chosen above, we can possibly increase $k$ to ensure that
	\begin{equation}\label{eq:bound-boundary}
		\bbU_k^V \left[\dist_{G_k}(o_k, \partial G_k) \leq R\right] < \varepsilon.
	\end{equation}

We now return to the task of bounding $(\mu_{\Lambda_k, n,x}^{\omega}\otimes \bbU_k^F)\left[\mathrm{Circ}_{r,k}(\mathbf{f}_k) \mid \omega_r \cup \omega_b = \omega\right]$ from below. 

	Recall~$p(n,x)$ from Proposition~\ref{prop:Domination}.
	We pick $\delta$ sufficiently small so that $p(n,x) \leq p_0$ for any $(n,x)\in [1,1+\delta]\times [1-\delta,1]$.
	Combining Propositions \ref{prop:PercolationBound} and \ref{prop:Domination}, we get,
	\begin{align*}
		\mu^{\omega}_{\Lambda_k(\zero),n,x}&[\mathrm{Circ}_{r,k}({\bf f}_k) \mid \omega_r\cup\omega_b=\omega ] \\
		&\geq 
		{\sf Ber}_{p(n,x)}[B_{2r+2}(\pi({\bf f}_k)) \not\leftrightarrow \partial G_k] \\
		&\geq 
		1 - {\sf Ber}_{p_0}[B_{2r+2}(\pi_\omega({\bf f}_k)) \leftrightarrow B_R^c(\pi({\bf f}_k))] - \1_{\mathrm{dist}_{G_k} (\pi({\bf f}_k),\partial G_k) < R}.
	\end{align*}
	We now aim to take the expectation of the final line over $\bbU_k^V$ and bound it by~\eqref{eq:bound-connection} and~\eqref{eq:bound-boundary}.
	One technical point here is that the equations deal with a vertex~$o_k$ chosen uniformly among {\em all} of~$V_k$, while in the last equation, $\pi({\bf f}_k)$ is chosen uniformly among elements of~$V_k $ that correspond to faces in~$\Lambda_k$.
	This issue can be easily resolved by noting that the latter set constitutes at least a constant portion of~$V_k$ and thus $\pi(\mathbf{f}_k)$ is uniformly absolutely continuous with respect to $o_k$.
	
	In conclusion, the last chain of inequalities holds true for~$o_k$ in the place of~$\pi(\mathbf{f}_k)$ at the cost of a multiplicative constant~$c$:
	\[
		\mu^{\omega}_{\Lambda_k,n,x}[\mathrm{Circ}_{r,k}({\bf f}_k) \mid \omega_r\cup\omega_b=\omega ] 
		\geq 
		1 - c\, ({\sf Ber}_{p_0}[B_{2r+2}(o_k) \leftrightarrow B_R^c(o_k)] +\1_{\mathrm{dist}(o_k,\partial G_{k}) < R}).
	\]
	Taking expectation over~${\bf f}_k$, the bounds ~\eqref{eq:bound-connection} and~\eqref{eq:bound-boundary} imply that, when $k$ is sufficiently large, 
	\[
		\mu^{\omega}_{\Lambda_k(\zero),n,x} [\mathrm{Circ}_{r,k}({\bf f}_k) \mid \omega_r\cup\omega_b=\omega] \geq 1 - 2c\varepsilon.
	\]
	Since~$\varepsilon>0$ is arbitrary, this finishes the proof.
\end{proof}

\subsection{Proofs of Corollaries \ref{cor:RSW} and \ref{cor:long-around-1}}

Before we prove the remaining corollaries, we must address a technical point: we have not shown that there exists any translation-invariant Gibbs measure for the loop~$O(n)$ model.
Usually, one constructs Gibbs measures by taking a (subsequential) thermodynamic limit of finite volume measures. 
However, since the loop~$O(n)$ model can have arbitrarily long range dependencies, it is theoretically possible for limiting measures not to satisfy the Gibbs property. 
For example, \cite{haggstrom1996random} shows that this occurs for the random-cluster measure on $d$-regular trees whenever $d \geq 3$ and $q>2$. 

In the next proposition, we exclude such pathology.
The proof relies on the uniqueness results of Burton and Keane \cite{BurKea89}.

	We let $\{\xi_k\}$ be a sequence of loop configurations, and recall that $\mathbf{f}_k$ is a uniformly chosen face in $\Lambda_k(\zero)$. Let $\tau_f$ be the unique translation map of $\bbT$ which maps $\zero$ to~$f$. 
	Define
	\[
		\tilde{\bbP}^{\, \xi_k}_{k,n,x}(\cdot) := \bbP^{\, \xi_k}_{\Lambda_k(\zero),n,x} \left( \tau^{-1}_{\mathbf{f}_k} \cdot\right),
	\]
	where $\mathbf{f}_k$ is chosen independently of the loop configuration. In words, $\tilde{\bbP}^{\, \xi_k}_{k,n,x}$ samples a loop configuration from $\bbP_{\Lambda_k(\zero),n,x}^{\, \xi_k}$, and then recenters the configuration around a uniformly chosen face.
	
\begin{prop}\label{prop:gibbs}
	For any $n,x>0$ and any sequence of loop configuration~$\{\xi_k\}$, any subsequential limit of $\{\tilde{\bbP}^{\, \xi_k}_{k,n,x}\}$ is a translation-invariant Gibbs measure with at most one bi-infinite path.
\end{prop}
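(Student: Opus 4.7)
My plan is to verify, in order, three properties of any subsequential weak limit $\bbP$ of $\{\tilde{\bbP}^{\xi_k}_{k,n,x}\}$: translation invariance, the almost-sure existence of at most one bi-infinite path, and the Gibbs property. Translation invariance is a direct consequence of the F\o lner property of $\{\Lambda_k(\zero)\}$. Fix a local event $A$ and a lattice translation $\tau_v$. Unwinding the definition of $\tilde{\bbP}^{\xi_k}_{k,n,x}$ as an average over a uniformly chosen face $\mathbf{f}_k \in F_k$, where $F_k$ denotes the face set of $\Lambda_k(\zero)$, one obtains
\[
\bigl|\tilde{\bbP}^{\xi_k}_{k,n,x}(\tau_v^{-1} A) - \tilde{\bbP}^{\xi_k}_{k,n,x}(A)\bigr| \leq \frac{|F_k \,\triangle\, (F_k + v)|}{|F_k|},
\]
and the right-hand side tends to $0$ since $\{\Lambda_k(\zero)\}$ is F\o lner. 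This bound passes to the weak limit, giving $\bbP(\tau_v^{-1} A) = \bbP(A)$ for every lattice translation $\tau_v$.

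The main step is to show that $\bbP$-almost surely the configuration contains at most one bi-infinite path, via a Burton--Keane argument. Let $N(\omega)$ count the bi-infinite paths in $\omega$; since every vertex of $\bbH$ has degree at most $2$ in a loop configuration, these paths are pairwise vertex-disjoint. By ergodic decomposition of the translation-invariant measure $\bbP$ into ergodic components, within each component $N$ is a.s.\ a constant $N_0 \in \{0, 1, 2, \dots, \infty\}$, and it suffices to rule out $N_0 \geq 2$. The finite-energy property required for Burton--Keane descends from the finite-volume Gibbs measures: for any finite region $B$ and any two loop configurations on $B$ with matching traces on $\partial B$, their conditional probabilities differ by a multiplicative factor depending only on $|E(B)|$, $n$, and $x$, and this bound is inherited by $\bbP$. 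When $2 \leq N_0 < \infty$, two of the bi-infinite paths must come within distance one of each other with positive probability; an XOR operation with a single hexagon, applied locally, preserves the even-degree constraint at every vertex and can be arranged to merge these two paths into one. Finite energy then forces $\bbP(N = N_0 - 1) > 0$, contradicting $N = N_0$ a.s. The case $N_0 = \infty$ is excluded by the classical trifurcation-counting argument: a positive density of sites in any box $\Lambda_L(\zero)$ where three distinct bi-infinite paths come near would force $\Omega(L^2)$ such sites, while planarity and bounded degree of $\bbH$ restrict their number to $O(L)$.

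Finally, the Gibbs property follows by passage to the limit in the finite-volume DLR equations, using the previous step. Fix a finite domain $\calD$ and bounded continuous $f, g$ depending only on $\omega|_\calD$ and $\omega|_{\calD^c}$ respectively. Since each edge of $\bbH$ belongs to at most one loop, the number of distinct loops meeting $\calD$ is bounded by $|E(\calD)|$; combined with $N \leq 1$, the loop count $\ell(\omega)$ intersecting $\calD$ is thus $\bbP$-a.s.\ finite, so the map $\omega \mapsto \bbE_{\bbP_{\calD,n,x}^\omega}[f]$ is continuous at $\bbP$-a.e.~$\omega$. For $k$ large, the event $\{\calD \subset \tau_{\mathbf{f}_k}^{-1}(\Lambda_k(\zero))\}$ has $\tilde{\bbP}^{\xi_k}_{k,n,x}$-probability tending to $1$, on which the finite-volume Gibbs property yields
\[
\tilde{\bbE}^{\xi_k}_{k,n,x}[fg] = \tilde{\bbE}^{\xi_k}_{k,n,x}\bigl[g \cdot \bbE_{\bbP_{\calD,n,x}^\omega}[f]\bigr] + o(1),
\]
and bounded convergence transfers the identity to $\bbP$. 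The main obstacle is the Burton--Keane step: transferring the finite-energy bound to $\bbP$ in a form strong enough to conclude, and verifying that XOR with a small hexagon reduces $N$ by exactly one (rather than creating new bi-infinite paths) require care, because the long-range weight $n^{\ell(\omega)}$ couples local edge flips to the global loop and path structure.
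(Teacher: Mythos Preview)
Your overall architecture---translation invariance via F\o lner, then Burton--Keane for $N\le 1$, then the Gibbs property using $N\le 1$---matches the paper's. The translation-invariance step is essentially identical, and your treatment of the Gibbs property, while more abstract than the paper's explicit conditioning on the event $\calT_R=\{\text{at most two crossings from }\calD\text{ to }\Lambda_R(\zero)^c\}$, is the same idea.

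The Burton--Keane step, however, contains a genuine gap, and it is not just the ``care'' you flag at the end. You try to reduce $N$ by XOR-ing with a hexagon to merge two nearby bi-infinite paths. This cannot work: in a loop configuration every vertex has degree $0$ or $2$, so if a finite region $B$ is crossed by $2r$ rays to infinity, then after \emph{any} modification inside $B$ the rays are re-paired and one still has exactly $r$ bi-infinite paths through $B$ (each such path uses precisely two rays). Thus $N$ is invariant under every finite-energy local move, and neither your $2\le N_0<\infty$ step nor your trifurcation sketch for $N_0=\infty$ can go through. Indeed, loop configurations admit no trifurcation points at all, so the usual $O(L)$ boundary counting has nothing to act on.

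The paper avoids this obstruction by passing to the spin representation: define $\sigma\in\{\pm1\}^{\bbT}$ by flipping sign across each edge of $\omega$, with $\sigma_\zero$ chosen uniformly. Flipping a single spin $\sigma_f$ is exactly $\omega\mapsto\omega\XOR\Gamma_f$, which changes the edge count by at most $6$ and the loop count by at most $3$; hence the pushforward of $\bbP$ to $\sigma$ is translation-invariant with uniform finite energy in the standard site-percolation sense. Classical Burton--Keane then gives at most one infinite $+$ cluster and at most one infinite $-$ cluster. A bi-infinite path of $\omega$ is an interface with an infinite $+$ cluster on one side and an infinite $-$ cluster on the other, and two such paths would force at least two infinite clusters of one sign, which is excluded. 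Passing to $\sigma$ is precisely what converts the ``topologically rigid'' quantity $N(\omega)$ into cluster counts that finite energy \emph{can} alter.
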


	We note that, unlike all other statements in this paper, this proposition holds regardless of the value of $n$ and $x$. A similar statement appears in \cite[Theorem 4.31]{Gri06}, in the context of the random cluster model, and in \cite[Section 6.3]{LamTas19}, as a corollary of a more general framework. We include a proof for completeness.
	
\begin{proof}
	We fix~$n,x>0$ and omit them from the notation for brevity.
	Supposed that $\{\tilde{\bbP}^{\xi_k}_{k}\}$ converges to an infinite-volume measure $\bbP$. 
	We claim that $\bbP$ is translation-invariant. It is enough to show this invariance for the translation by one to the right, which we denote by $\tau_1$. Let $A$ be an arbitrary event, and consider the difference  $\tilde{\bbP}^{\xi_k}_{k}(A) - \tilde{\bbP}^{\xi_k}_{k}(\tau_1 A)$. By definition,
\begin{align*}
		\left| \tilde{\bbP}^{\xi_k}_{k}(A) -\tilde{\bbP}^{\xi_k}_{k}(\tau_1 A) \right|	
		&=\left|\bbP^{\xi_k}_{\Lambda_k(\zero)}(\tau_{\mathbf{f}_k}A) - \bbP^{\xi_k}_{\Lambda_k(\zero)}(\tau_{\mathbf{f}_k} \tau_1 A) \right| \\
		&\leq \tfrac{1}{|\{\text{faces in } \Lambda_k(\zero)\}|}\cdot \mathbb{E}^{\xi_k}_{\Lambda_k(\zero)} \left[ \left| \sum_{f\in F(\calD_k)} \1_{\tau_f A}-\sum_{f'\in F(\tau_1\Lambda_k(\zero))}\1_{\tau_{f'} A}\right|\right]\\
		&\leq \tfrac{|\{\text{inner or outer boundary faces of }\Lambda_k(\zero)\}|}{|\{\text{faces in } \Lambda_k(\zero)\}|} \xrightarrow[k\to \infty]{}  0,
	\end{align*}
	where the last inequality follows from the fact that all faces except those on the boundary are counted in both sums.

Now, let $\omega$ be distributed as $\bbP$, and define $\sigma$ to be the following configuration on $\{-1,1\}^{\bbT}$: $\sigma_\zero$ is $\pm 1$ with probability~$1/2$ and, for any neighboring faces~$f,g$, we have~$\sigma_f=\sigma_g$ if and only if their common edge is not in~$\omega$. 
In Section \ref{sec:ising}, we used this construction to associate the loop $O(1)$ model with the Ising model; for more general values of $n$, the distribution of $\sigma$ is non-local.

By definition, the pushforward measure on $\sigma$ is translation-invariant. Furthermore, the measure has the uniform-finite energy property, in the sense that the probability $\sigma_f = +1$ is uniformly bounded away from zero and one even if one conditions on the state of all other spins. Indeed, switching the sign of $\sigma_f$ is equivalent to mapping $\omega$ to $\omega \XOR \Gamma_f$, where $\Gamma_f$ is the loop of length six surrounding $f$. This operation changes the number of loops by at most two, and the number of edges by at most six, whence
\[
	\bbP^{\xi_k}_{\Lambda_k(\zero)}(\omega \XOR \Gamma_f)\geq  c(n,x) \cdot \bbP^{\xi_k}_{\Lambda_k(\zero)}(\omega), \text{ where }
	c(n,x) = \min \{n^2,n^{-2}\} \cdot \min \{x^6, x^{-6}\}.
\]

Using  the standard Burton-Keane argument \cite[Theorem 2]{BurKea89}, this implies that there is at most one infinite cluster of $+1$'s in $\sigma$, $\bbP$-almost surely; by symmetry, the same holds for $-1$'s. The correspondence between $\sigma$ and $\omega$ implies that a bi-infinite path in $\omega$ will occur as an interface between an infinite cluster of $+1$'s and an infinite cluster of $-1$'s. Since there is at most one of each, there can only be one such interface.

To complete the proof, we are left to prove that $\bbP$ is a Gibbs measure for the loop $O(n)$ model with edge weight $x$ --- that is, to confirm that the marginal of $\bbP$ on any finite subgraph $\calD$, conditioned on the restriction of $\omega$ to $\calD^c$, is $\bbP_{\calD}^{\omega}$. Fix $\calD$ to be a finite subgraph containing $\zero$, and let $R$ be an integer such that $\Lambda_R(\zero)$ contains $\calD$. We set $\calT_R$ to be the event that $\omega$ contains at most two disjoint paths that intersect both $\calD$ and $\Lambda_R(\zero)^c$. Since $\bbP$ is supported on configurations with at most one bi-infinite path, we have that
\begin{equation}\label{eq:fRprob}
\lim_{R \to \infty} \bbP[\calT_R] = 1.
\end{equation}
Fix some loop configuration $\omega_0$. 
Then, using Levy's upwards theorem and \eqref{eq:fRprob}, and the definition of $\bbP$, we find
\begin{align*}
\bbP( \omega = \omega_0 \text{ on } \calD \, | \,  \omega \text{ on } \calD^c ) & = \lim_{R \rightarrow \infty} \bbP( \omega = \omega_0 \text{ on } \calD  \, | \,  \omega \text{ on } \Lambda_R(\zero) \setminus \calD) \\ & =\lim_{R \rightarrow \infty} \bbP( \omega = \omega_0 \text{ on } \calD  \, | \,  \omega \text{ on } \Lambda_R(\zero) \setminus \calD, \, \calT_R) \\ & = \lim_{R \rightarrow \infty} \lim_{k \to \infty} \,  \tilde{\bbP}^{\xi_k}_{k}( \omega = \omega_0 \text{ on } \calD  \, | \,  \omega \text{ on } \Lambda_R(\zero) \setminus \calD, \calT_R).
\end{align*}

Let $\calT^{(i)}_R$ be the event that~$\omega$ contains precisely $i$ disjoint crossings from $\calD$ to $\Lambda_R(\zero)^c$. Since the loop~$O(n)$ model is supported on loop configurations, $\calT^{(i)}_R = \emptyset$ when~$i$ is odd. Thus, $\calT_R = \calT^{(0)}_R \cup \calT^{(2)}_R$. If $\calT^{(0)}_R$ occurs, then every loop that intersects $\calD$ is contained in $\Lambda_R(\zero)$; in particular, the restriction of $\tilde{\bbP}^{\xi_k}_{k}$ to $\calD$ is determined by the restriction of $\omega$ to $\Lambda_R(\zero)$.

Recall that~$\tilde{\bbP}^{\xi_k}_{k}$ is defined by sampling a loop configuration, distributed according to~$\bbP_{\Lambda_{k}(\zero)}^{\xi_k}$, and recentering it around a uniformly chosen face~$\mathbf{f_k}$.
By definition of the loop~$O(n)$ measure on finite domains, we deduce that, for any $k$,
\begin{equation}\label{eq:gibbs-t-r-0}
	\tilde{\bbP}^{\xi_k}_{k}( \omega = \omega_0 \text{ on } \calD  \, | \,  \omega \text{ on } \Lambda_R(\zero) \setminus \calD, \calT^{(0)}_R, \dist (\mathbf{f_k}, \Lambda_k(\zero)^c)>R) = \frac{x^{|\omega_0|} n^{\ell(\omega_0)}}{Z^{\omega}_{\calD}} = \bbP^{\omega}_{\calD}(\omega_0),
\end{equation}
where $|\omega_0|$ is the number of edges in $\omega_0 \cap \calD$, and $\ell(\omega_0)$ is the number of loops intersecting $\calD$ (as there are no bi-infinite paths intersecting $\calD$ under our assumptions). Note that the condition on~$\mathbf{f_k}$ ensures that none of the edges in~$\calD$ belong to~$\Lambda_k(\mathbf{f_k})^c$ --- that is, none of these edges are fixed by the boundary conditions~$\xi_k$.

The relation~\eqref{eq:gibbs-t-r-0} is precisely the desired Gibbs condition, modulo the restriction on~$\mathbf{f_k}$. Luckily, for any fixed $R$, the probability that $\dist (\mathbf{f_k}, \Lambda_k(\zero)^c)>R$ approaches 1 as $k$ tends to infinity, whence the statement follows.

Next, we assume that $\calT^{(2)}_R$ occurs. In this case, there is either one bi-infinite path which intersects $\calD$, or precisely one loop which intersects both $\calD$ and $\Lambda_R(\zero)^c$. Luckily, the marginal of the loop~$O(n)$ model on $\calD$ is the same in both cases: for any $k > R$,
\begin{equation}\label{eq:InfinitePath}
	\tilde{\bbP}^{\xi_k}_{k}( \omega = \omega_0 \text{ on } \calD  \, | \,  \omega \text{ on } \Lambda_R(\zero) \setminus \calD, \calT^{(2)}_R,  \dist (\mathbf{f_k}, \Lambda_k(\zero)^c)>R) = \frac{x^{|\omega_0|} n^{\ell_R(\omega_0) + 1}}{Z^{\omega}_{\calD}},
\end{equation}
where $\ell_R(\omega_0)$ is the number of loops that intersect $\calD$ and are contained in $\Lambda_R(\zero)$. Since the righthand side of \eqref{eq:InfinitePath} is independent of $k$, we may take the limit in $k$ with impunity. As above, the restriction on~$\mathbf{f_k}$ disappears; the limit in $R$ of $\calT^{(2)}_R$ is precisely the event that there exists a unique bi-infinite path intersecting $\calD$; similarly, the limit in $R$ of $\ell_R(\omega_0)$ is the number of finite loops that intersect $\calD$. Thus,
\[
\lim_{R \to \infty} \frac{x^{|\omega_0|} n^{\ell_R(\omega_0) + 1}}{Z^{\omega}_{\calD}} =  \frac{x^{|\omega_0|} n^{\ell(\omega_0)}}{Z^{\omega}_{\calD}} = \bbP^{\omega}_{\calD}(\omega_0).
\]
\end{proof}

\begin{proof}[Proof of Corollary \ref{cor:RSW}]
	By Theorem \ref{thm:long-around-1-inf-vol} and  Proposition \ref{prop:gibbs}, there exists at least one translation-invariant Gibbs measure with either (I) infinitely many loops surrounding every face or (II) a bi-infinite path. By the dichotomy theorem~\cite[Theorem 2]{DumGlaPel21}, when~$n\geq 1$ and~$x\leq \tfrac1{\sqrt{n}}$, (I) implies the RSW estimate~\eqref{eq:RSW} and (II) is impossible.
\end{proof}

\begin{proof}[Proof of Corollary \ref{cor:long-around-1}]
By definition, the probability of the events $C_{r,R}(\mathbf{f}_k)$ and $S_{r,R}(\mathbf{f}_k)$ under the product measure of $\bbP^{\xi_k}_{\Lambda_k(\zero),n,x}$ and $\mathbf{f}_k$ is the same as the probability of $C_{r,R}(\zero)$ and $S_{r,R}(\zero)$ under $\tilde{\bbP}^{\xi_k}_{k,n,x}$. Assume that $\tilde{\bbP}^{\xi_k}_{k,n,x}$ converges to an infinite-volume $\bbP_{n,x}$.
Since the events $C_{r,R}(\zero)$ and $S_{r,R}(\zero)$ are mutually exclusive,
\begin{align}
	\lim_{r \to \infty} \lim_{R \to \infty} \bbP_{n,x} ( C_{r,R}(\zero) \cup S_{r,R}(\zero)) & = \lim_{r \to \infty} \lim_{R \to \infty} \bbP_{n,x} ( C_{r,R}(\zero)) +  \bbP ( S_{r,R}(\zero)) \nonumber \\
	& = \bbP_{n,x} \left(\bigcup_{r >0} \, \bigcap_{R > 0 } C_{r,R}(\zero) \right) + \bbP_{n,x} \left(\bigcap_{r >0} \, \bigcup_{R > 0 } S_{r,R}(\zero)\right), \label{eq:c-or-s-limit}
\end{align}
where we may take the limit inside the measure since $C_{r,R}(\zero)$ is decreasing in $R$ and increasing in $r$, whereas $S_{r,R}(\zero)$ is increasing in $R$ and decreasing in $r$.
One has
\begin{align*}
\bigcup_{r >0} \, \bigcap_{R > 0 } C_{r,R}(\zero)& = \{\exists \text{ a bi-infinite path} \}, \text{ and } \\ \bigcap_{r >0} \, \bigcup_{R > 0 } S_{r,R}(\zero) & = \{\text{every face is surrounded by infinitely many loops} \}.
\end{align*}
Thus, by Proposition \ref{prop:gibbs} and Theorem \ref{thm:long-around-1-inf-vol}, the RHS of~\eqref{eq:c-or-s-limit} equals 1, as required.

For the second stipulation, we wish to show that $\{\calL(\mathbf{f}_k)\}$ is not uniformly integrable under  $\tilde{\bbP}^{\xi_k}_{\Lambda_k(\zero),n,x}$ (which equals $\bbP$). Assume that, for some $r >0$,
\begin{equation}\label{eq:assumption-c-r-R-non-unif-int}
	\lim_{R \to \infty} \liminf_{k \to \infty} \,  \tilde{\bbP}^{\xi_k}_{\Lambda_k(\zero),n,x}(C_{r,R}(\mathbf{f}_k)) > 0.
\end{equation}
Fix a loop configuration $\omega$, and let $\mathbf{L}_s$ be the set of faces in $\Lambda_k(\zero)$ which border a loop of length greater than $s$. We also set $\mathbf{C}_{r,R}$ to be the set of faces for which $C_{r,R}(f)$ occurs. Every $ f \in \mathbf{C}_{r,R}$ must be at distance at most $r$ from a face bordering a loop intersecting the complement of $\Lambda_R(f)$. Thus, it is straightforward to see that
\[
\mathbf{C}_{r,R} \subset \bigcup_{f \in \mathbf{L}_{(R-r)/6} } \{\text{faces of } \Lambda_r(f) \}.
\]
Taking a union bound and then averaging over $\omega$, we find that, for any $R > r$,
\[
\tilde{\bbP}^{\xi_k}_{\Lambda_k(\zero),n,x} [C_{r,R}(\mathbf{f}_k)] \leq |\{\text{faces of } \Lambda_r(\zero)\} | \cdot  \mathbb{P}[\mathcal{L}(\mathbf{f}_k )> (R-r)/6].
\]
Then, our assumption~\eqref{eq:assumption-c-r-R-non-unif-int} implies that,
\[
\lim_{R \to \infty} \liminf_{k \to \infty} \tilde{\bbP}^{\xi_k}_{\Lambda_k(\zero),n,x}(\mathcal{L}(\mathbf{f}_k) > R) > 0,
\]
that is $\{\calL(\mathbf{f}_k)\}$ has a uniformly positive probability to be larger than any $R$. Thus, the sequence is not tight, and therefore not uniformly integrable.

By the first part of the corollary, we may now assume that, for every $r>0$, there exists an $R = R(r)$ such that
\[
\liminf_{k \to \infty} \tilde{\bbP}^{\xi_k}_{\Lambda_k(\zero),n,x} (S_{r,R}(\mathbf{f}_k)) >0.
\]
Let $\calV(f)$ be the number of faces surrounded by the outermost loop that surrounds $f$. The above inequality implies that, for all $r$ positive,
\[
\liminf_{k \to \infty} \tilde{\bbP}^{\xi_k}_{\Lambda_k(\zero),n,x} \left(\calV(\mathbf{f_k}) > |\{\text{faces of } \Lambda_r(\zero) \}|\right) >0.
\]

Next, fix a loop configuration $\omega$, and let $\mathsf{Out}(\omega)$ be the set of all outermost loops in $\omega$, and let $\mathsf{Area}(\ell)$ be the number of faces surrounded by an outermost loop $\ell \in \mathsf{Out}(\omega)$. By isoperimetric considerations, there exists an absolute constant $c$ so that $\mathsf{Area}(\ell) \leq c |\ell|^2$.  Then
\begin{align}
	\sum_{\ell \in \mathsf{Out}(\omega)} \mathsf{Area}(\ell) \cdot \1_{\mathsf{Area}(\ell) >r}
	& \leq c \sum_{\ell \in \mathsf{Out}(\omega)} |\ell|^2 \cdot \1_{|\ell| >(r/c)^{1/2}} \nonumber \\
	& \leq   6c \sum_{f } \sum_{\ell \in \mathsf{Out}(\omega)} |\ell| \cdot \1_{f \text{ borders } \ell, \, |\ell| >(r/c)^{1/2}} \nonumber \\
	& \leq   6 c \sum_{f  } \sum_{\ell \in \omega } |\ell| \cdot \1_{f \text{ borders } \ell, \, |\ell| >(r/c)^{1/2}}, \label{eq:unif-int-of-area-ell}
 \end{align}
where the inequality on the second line follows because the number of faces bordering a loop is greater than its length divided by six, and the inequality in the third line simply sums over a larger set.
Dividing both sides of~\eqref{eq:unif-int-of-area-ell} by the number of faces in $\Lambda_k(\zero)$, we obtain
\[
	\tilde{\bbP}^{\xi_k}_{\Lambda_k(\zero),n,x} \left[\calV(\mathbf{f}_k) > r \right] \leq
	6c\cdot \tilde{\bbE}^{\xi_k}_{\Lambda_k(\zero),n,x} \left(\calL(\mathbf{f}_k) \cdot 1_{\calL(\mathbf{f}_k) > (r/c)^{1/2}}\right).
\]
Taking the $\liminf$ as $k$ goes to infinity, we find that the tail expectation is uniformly bounded from below --- i.e. $\{\calL(\mathbf{f}_k)\}$ is not uniformly integrable.
\end{proof}

\bibliographystyle{amsalpha}
\bibliography{biblicomplete}
\end{document}